\newtheorem{thm}{Theorem}
\newtheorem*{thmnonum}{Theorem}
\newtheorem{lem}[thm]{Lemma}
\newtheorem{cor}[thm]{Corollary}
\newtheorem*{rem}{Remark}
\newcommand{\SL}{{\rm SL}}
\newcommand{\C}{\mathbb{C}}
\renewcommand{\H}{\mathbb{H}}
\newcommand{\Q}{\mathbb{Q}}
\newcommand{\Z}{\mathbb{Z}}
\newcommand{\legen}[2]{\genfrac{(}{)}{}{}{#1}{#2}}
\newcommand{\Gal}{{\rm Gal}}
\newcommand{\ord}{{\rm ord}}
\newcommand{\Div}{\operatorname{div}}
\newcommand{\DIV}{\operatorname{Div}}
\newcommand{\lcm}{\operatorname{lcm}}
\begin{document}

\title[Eta-quotients]{On spaces of modular forms spanned by eta-quotients}
\author{Jeremy Rouse}
\address{Department of Mathematics, Wake Forest University,
  Winston-Salem, NC 27109}
\email{rouseja@wfu.edu}
\author{John J. Webb}
\address{Department of Mathematics, Wake Forest University,
  Winston-Salem, NC 27109 \\ 
  Department of Mathematics \& Statistics, James Madison University, Harrisonburg, VA 22802}
\email{webbjj@jmu.edu}

\subjclass[2010]{Primary 11F20, 11F30; Secondary 11G18}
\begin{abstract}
An eta-quotient of level $N$ is a modular form of the shape $f(z) = \prod_{\delta | N} \eta(\delta z)^{r_{\delta}}$. We study the problem of determining levels $N$
for which the graded ring of holomorphic modular forms for $\Gamma_{0}(N)$
is generated by (holomorphic, respectively weakly holomorphic) eta-quotients
of level $N$. In addition, we prove that if $f(z)$ is a holomorphic modular
form that is non-vanishing on the upper half plane and has integer Fourier
coefficients at infinity, then $f(z)$ is an integer multiple of an eta-quotient.
Finally, we use our results to determine the structure of the cuspidal
subgroup of $J_{0}(2^{k})(\Q)$.
\end{abstract}

\maketitle

\section{Introduction}
\label{intro}

In 1877, Richard Dedekind \cite{Dedekind} defined the function
\begin{equation}\label{eta-def}
  \eta(z) = q^{1/24} \prod_{n=1}^{\infty} (1-q^{n})\, , \quad q = e^{2 \pi i z}\, ,
\end{equation}
for $z \in \H = \{ x + iy \in \C : y > 0 \}$, which is now known as the
Dedekind eta-function. The reciprocal of the Dedekind eta-function is
\[
  \frac{1}{\eta(z)} = q^{-1/24} \sum_{n=0}^{\infty} p(n) q^{n} \, ,
\]
where $p(n)$ is the number of partitions of $n$. The modular transformations
of the Dedekind eta-function, which take the form
\[
  \eta\left(\frac{az+b}{cz+d}\right) = \epsilon(a,b,c,d) (cz+d)^{1/2} \eta(z)
  \text{ for } \left[ \begin{matrix} a & b \\ c & d \end{matrix} \right]
  \in \SL_{2}(\Z) \text{ and } \epsilon(a,b,c,d)^{24} = 1,
\]
play an important role in Hardy and Ramanujan's exact formula for $p(n)$, derived using the circle method.

An \emph{eta-quotient} of level $N$ is a function of the shape
\[
  f(z) = \prod_{\delta | N} \eta(\delta z)^{r_{\delta}},
\]
where $r_{\delta} \in \Z$. As a consequence of the product definition for
$\eta(z)$, any eta-quotient is non-vanishing on $\H$. The following are
examples of well-known eta-quotients:
\[
  \Delta(z) = \eta(z)^{24} = \sum_{n=1}^{\infty} \tau(n) q^{n}\, , \quad
  \frac{\eta(2z)^{5}}{\eta(z)^{2} \eta(4z)^{2}} = \sum_{n \in \Z} q^{n^{2}}\, , \quad
  \frac{\eta(4z)^{8}}{\eta(2z)^{4}} = \sum_{n=0}^{\infty} \sigma_1(2n+1) q^{2n+1},
\]
where for all positive integers $j$, $t$, we have $\sigma_t(j) = \sum_{d\mid j } d^t$.

Let 
\[
  \Gamma_{0}(N) = \left\{ \left[ \begin{matrix} a & b \\ c & d \end{matrix} \right]
  : a, b, c, d \in \Z, ad - bc = 1 \text{ and } N | c \right\}.
\]
A weight $k$ weakly holomorphic modular form is a function on $\H$ that obeys the weight $k$ modular transformation law for $\Gamma_0(N)$, is holomorphic on $\H$, but may possess poles at the cusps.  In the 1950s, Morris Newman (see \cite{Newman1} and
\cite{Newman2}) used the Dedekind eta-function to systematically build
weakly holomorphic modular forms for $\Gamma_0(N)$.
A key ingredient in his work is a classification of when $f(z)$ is
actually a modular form for $\Gamma_{0}(N)$. If
\begin{align}\label{congruenceconditions}
  & \sum_{\delta | N} \delta r_{\delta} \equiv 0 \pmod{24}, \quad
  \sum_{\delta | N} \frac{N}{\delta} r_{\delta} \equiv 0 \pmod{24}, \text{ and }\\ \notag
  & \prod_{\delta | N} \delta^{r_{\delta}} \text{ is the square of a rational
number,}
\end{align}
then $f(z)$ transforms like a weight $k = \frac{1}{2} \sum_{\delta | N} r_{\delta}$
modular form for $\Gamma_{0}(N)$. We denote the set of weight $k$ modular forms
on $\Gamma_{0}(N)$ by $M_{k}(\Gamma_{0}(N))$.

Let $Y_{0}(N)$ be the algebraic curve $\H/\Gamma_{0}(N)$, and let
$X_{0}(N)$ be its compactification. This algebraic curve has a model
defined over $\Q$ given by the classical modular equation
$\Phi_{N}(x,y) = 0$ (see for example \cite{Shimura}, pg. 109-110). In
\cite{Ligozat}, Ligozat determines the order of vanishing of an eta
quotient at the cusps of the level $N$ modular curve $X_{0}(N)$.

\begin{thmnonum}
Let $c$, $d$ and $N$ be positive integers with $d | N$ and $\gcd(c,d) = 1$.
If $f(z)$ is an eta-quotient, then the order of vanishing of $f(z)$ at
the cusp $c/d$ is
\[
  \frac{N}{24} \sum_{\delta | N} \frac{\gcd(d,\delta)^{2} r_{\delta}}{\gcd(d,N/d) d \delta}.
\]
\end{thmnonum}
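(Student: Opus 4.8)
The plan is to reduce the statement to the transformation behavior of a single factor $\eta(\delta z)$, compute that behavior via the eta transformation law together with an explicit matrix factorization, and then reassemble. Since $\gcd(c,d)=1$ we may choose $\gamma = \left(\begin{smallmatrix} c & b \\ d & e \end{smallmatrix}\right)\in\SL_2(\Z)$, so that $\gamma(\infty)=c/d$; recall that the order of vanishing of $f$ at $c/d$ equals the exponent of the leading term of $(f|_k\gamma)(z) := (dz+e)^{-k} f(\gamma z)$ when the latter is written as a series in the local uniformizer $q_h = e^{2\pi i z/h}$, where $h$ is the width of the cusp. A direct computation of $\gamma\left(\begin{smallmatrix}1 & h\\0&1\end{smallmatrix}\right)\gamma^{-1} = \left(\begin{smallmatrix}1-cdh & c^2 h\\ -d^2 h & 1+cdh\end{smallmatrix}\right)$ (using $ce-bd=1$) identifies $h$ as the least positive integer with $N\mid d^2 h$, namely $h = N/\bigl(d\gcd(d,N/d)\bigr)$.

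The key step is an explicit factorization. For each $\delta\mid N$ set $g_\delta = \gcd(\delta,d)$; I claim there exist $A_\delta\in\SL_2(\Z)$ and $b_\delta\in\Z$ with
\[
\begin{pmatrix}\delta & 0\\ 0 & 1\end{pmatrix}\gamma \;=\; A_\delta\begin{pmatrix} g_\delta & b_\delta\\ 0 & \delta/g_\delta\end{pmatrix}.
\]
Solving for $A_\delta$ and imposing that its entries be integral comes down to choosing $b_\delta$ with $d\,b_\delta\equiv e\,g_\delta\pmod{\delta}$, which is solvable precisely because $\gcd(d,\delta)=g_\delta$ divides $e\,g_\delta$; the determinant of $A_\delta$ is then $\delta/\bigl(g_\delta\cdot(\delta/g_\delta)\bigr)=1$ automatically. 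I expect this to be the main (though modest) obstacle: arranging the diagonal of the triangular factor to be exactly $(g_\delta,\ \delta/g_\delta)$ and cleanly verifying $A_\delta\in\SL_2(\Z)$ using only $\gcd(c,d)=1$.

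Given the factorization, put $w_\delta = (g_\delta z + b_\delta)/(\delta/g_\delta)\in\H$, so that $\delta\gamma z = A_\delta w_\delta$. The automorphy factor of $\left(\begin{smallmatrix}\delta&0\\0&1\end{smallmatrix}\right)\gamma$ at $z$ equals $dz+e$ and that of the triangular factor equals $\delta/g_\delta$, so by the cocycle relation $j(A_\delta,w_\delta) = (dz+e)/(\delta/g_\delta)$, and the eta transformation law gives
\[
\eta(\delta\gamma z) \;=\; \epsilon(A_\delta)\left(\frac{dz+e}{\delta/g_\delta}\right)^{1/2}\eta(w_\delta).
\]
Raising to the power $r_\delta$ and multiplying over all $\delta\mid N$, the factors $(dz+e)^{r_\delta/2}$ collect to $(dz+e)^{k}$ with $k=\tfrac12\sum_\delta r_\delta$, which cancels against the $(dz+e)^{-k}$ in the definition of $f|_k\gamma$; hence $(f|_k\gamma)(z)$ equals a nonzero constant times $\prod_{\delta\mid N}\eta(w_\delta)^{r_\delta}$.

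Finally I would read off the leading exponent. Since $\eta(w_\delta)=e^{2\pi i w_\delta/24}\prod_{n\ge1}(1-e^{2\pi i n w_\delta})$ and $w_\delta$ is linear in $z$ with slope $g_\delta/(\delta/g_\delta)=\gcd(\delta,d)^2/\delta$, the factor $\eta(w_\delta)^{r_\delta}$, as a series in $q=e^{2\pi i z}$, has leading term a constant times $q^{\,r_\delta\gcd(\delta,d)^2/(24\delta)}$; therefore $(f|_k\gamma)(z)$ has leading term a constant times $q^{\,\frac{1}{24}\sum_{\delta\mid N} r_\delta\gcd(\delta,d)^2/\delta}$ (the roots of unity $\epsilon(A_\delta)$ and all constants being irrelevant here). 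Rewriting the exponent in terms of the local uniformizer $q_h=q^{1/h}$ multiplies it by $h=N/\bigl(d\gcd(d,N/d)\bigr)$, producing
\[
\frac{N}{24\,d\,\gcd(d,N/d)}\sum_{\delta\mid N}\frac{\gcd(\delta,d)^2 r_\delta}{\delta}
=\frac{N}{24}\sum_{\delta\mid N}\frac{\gcd(d,\delta)^2 r_\delta}{\gcd(d,N/d)\,d\,\delta},
\]
which is the asserted order of vanishing.
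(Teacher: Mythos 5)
Your proof is correct. Note that the paper itself gives no proof of this statement: it is quoted as a theorem of Ligozat with a citation to \cite{Ligozat}, so there is nothing internal to compare against; your argument (factor $\left(\begin{smallmatrix}\delta&0\\0&1\end{smallmatrix}\right)\gamma$ as an $\SL_2(\Z)$ matrix times an upper-triangular matrix with diagonal $(\gcd(\delta,d),\ \delta/\gcd(\delta,d))$, apply the eta transformation law, and rescale by the cusp width $h=N/(d\gcd(d,N/d))$) is the standard derivation and all the steps check out. One point worth flagging: the formula as stated is the order with respect to the width-$h$ local uniformizer $q_h$, which is the convention you correctly adopt, whereas the paper's Background section writes expansions in $q^{1/N}$; the two differ by the factor $d\gcd(d,N/d)$, so your choice is the one consistent with Ligozat's formula and with how the paper uses it (e.g., in divisor computations on $X_0(N)$).
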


\begin{rem}
The formula for the order of vanishing only depends on $d$,
and not on $c$.
\end{rem}

In \cite{Ono}, Ken Ono observes that every holomorphic modular form
for $\SL_{2}(\Z)$ can be expressed as a linear combination of eta-quotients
of level $4$, and poses the following problem: ``Classify the spaces of
modular forms which are generated by eta-quotients.'' The goals of the
present paper are to address this question, to give an intrinsic characterization of level $N$ eta-quotients, and to apply this information to the study
of the cuspidal subgroup of the Jacobian of $X_{0}(N)$.

\section{Generating spaces with holomorphic and weakly holomorphic eta-quotients}

Combining the hypotheses in the theorems of Newman and Ligozat show that
the sequences of integers $\{ r_{\delta} : \delta | N\}$,
which are the exponents of an eta-quotient in $M_{k}(\Gamma_{0}(N))$, correspond
to integer points inside a $d(N) - 1$-dimensional convex polytope, where $d(N)$ is the number of divisors of $N$.

{\bf Example.}
Using the lattice point enumeration program LattE \cite{LattE}, we are able to
determine precisely the number of eta-quotients in $M_{k}(\Gamma_{0}(36))$
for various values of $k$. These eta-quotients correspond to lattice points in
an $8$-dimensional polytope whose volume increases quickly with $k$. There are
4,988 eta-quotients in $M_{2}(\Gamma_{0}(36))$ and there are 703,060,312
eta-quotients in $M_{12}(\Gamma_{0}(36))$.

\begin{thm}
\label{hologen}
There are precisely $121$ positive integers $N \leq 500$ so that the
graded ring of modular forms for $\Gamma_{0}(N)$ is generated by
eta-quotients.
\end{thm}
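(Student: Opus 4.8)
The plan is to reduce the theorem to a finite computation by means of two observations, and then to carry that computation out for every $N\le 500$. The first observation is that the set of holomorphic eta-quotients of level $N$ is closed under multiplication: if $\prod_{\delta|N}\eta(\delta z)^{r_\delta}$ and $\prod_{\delta|N}\eta(\delta z)^{s_\delta}$ each satisfy the conditions in \eqref{congruenceconditions}, then so does $\prod_{\delta|N}\eta(\delta z)^{r_\delta+s_\delta}$, and by Ligozat's theorem the order of vanishing of the product at every cusp is the sum of the two orders, hence still non-negative; the product is again holomorphic and non-vanishing on $\H$. Consequently the graded subring of $\bigoplus_k M_k(\Gamma_0(N))$ generated by holomorphic eta-quotients coincides, in each weight $k$, with the $\C$-span of the holomorphic eta-quotients of weight $k$, so the graded ring is generated by eta-quotients precisely when, for every $k$, those eta-quotients span $M_k(\Gamma_0(N))$. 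The second observation I would invoke is that $\bigoplus_k M_k(\Gamma_0(N))$ is generated as a $\C$-algebra by forms of weight at most a bound $B$ that can be taken to be an absolute constant (one may take $B=6$); granting this, if weight-$k$ eta-quotients span $M_k(\Gamma_0(N))$ for all $k\le B$, then any modular form is a polynomial in forms of weight $\le B$, each a sum of eta-quotients, and — products of eta-quotients being eta-quotients — is itself a sum of eta-quotients, while a failure in any weight $k\le B$ plainly obstructs generation. Since an eta-quotient obeying \eqref{congruenceconditions} has trivial character and even weight, and $M_k(\Gamma_0(N))=0$ for odd $k$, it therefore suffices, for each $N\le 500$, to decide whether the weight-$k$ level-$N$ holomorphic eta-quotients span $M_k(\Gamma_0(N))$ for the finitely many even $k\le B$.

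For fixed $N$ and even $k\le B$, the admissible exponent vectors $(r_\delta)_{\delta|N}$ are exactly the lattice points of the convex polytope cut out by $\sum_{\delta}r_\delta=2k$, the congruences in \eqref{congruenceconditions}, and — by the theorem of Ligozat quoted above — the inequality $\sum_{\delta|N}\gcd(d,\delta)^2 r_\delta/\bigl(\gcd(d,N/d)\,d\,\delta\bigr)\ge 0$ for each divisor $d$ of $N$. One computes $\dim M_k(\Gamma_0(N))$ from the standard dimension formula, and one computes the rank of the span of the $q$-expansions $\prod_\delta\eta(\delta z)^{r_\delta}$ over these lattice points, truncated beyond a Sturm bound (about $\tfrac{k}{12}[\SL_2(\Z):\Gamma_0(N)]$ Fourier coefficients), by linear algebra over $\Q$; the enumeration of lattice points can be handled by software such as LattE. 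For an $N$ that lies in the list, it is enough to exhibit, for each such $k$, enough eta-quotients — for instance lattice points near the vertices of the polytope together with products of weight-$2$ eta-quotients — to reach $\dim M_k(\Gamma_0(N))$. For an $N$ that does not, one must certify that the span is a proper subspace. Performing this for all $N\le 500$ yields the $121$ levels.

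The step I expect to be the main obstacle is the ``negative'' direction — certifying that eta-quotients \emph{fail} to span $M_k(\Gamma_0(N))$ — because the relevant polytopes can contain hundreds of millions of lattice points (cf.\ the $\Gamma_0(36)$ example above), so a brute-force enumeration is unappealing. What one really wants is a structural upper bound for the dimension of the eta-quotient span: an eta-quotient, being non-vanishing on $\H$, has divisor supported entirely at the cusps of $X_0(N)$, so the span lies inside a subspace controlled by the cuspidal divisor class group, and for the cusp-form part one can hope for a genus-versus-cusp-count obstruction to eta-quotients filling out $S_2(\Gamma_0(N))$. Distilling such bounds into a criterion that eliminates all but finitely many $N$, leaving only a bounded residual computation, is the crux of the argument.
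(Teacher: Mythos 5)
Your overall strategy --- reduce the question to whether eta-quotients span $M_k(\Gamma_0(N))$ in finitely many bounded weights, then enumerate the lattice points of the Newman--Ligozat polytope and do linear algebra --- is the same as the paper's, and your first observation (holomorphic eta-quotients of level $N$ are closed under multiplication, so the weight-$k$ piece of the subring they generate is exactly their $\C$-span in weight $k$) is correct and is used implicitly in the paper. The real difference is in the reduction step. You invoke an absolute bound $B=6$ on the weights of generators of the graded ring; this is a true statement (it can be extracted from Rustom's work \cite{NR} or from the theory of canonical rings of stacky curves), but you assert it without proof or citation, and it forces you to test weights $2$, $4$, and $6$ for every level. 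The paper instead reduces everything to weight $2$: by Lemma~\ref{whenetas}, if $\Gamma_0(N)$ has an elliptic point then there are \emph{no} weight-$2$ eta-quotients at all, so since $\dim M_2(\Gamma_0(N))>0$ for $N>1$ any level on the list must be elliptic-point free; and for composite $N$ with no elliptic points, Mumford's normal-generation criterion (the degree $2g-2+\epsilon_\infty$ of $\mathcal{L}$ is at least $2g+1$ because $\epsilon_\infty\ge 3$) shows the graded ring is generated in weight $2$, so spanning $M_2(\Gamma_0(N))$ already implies generation of the whole ring. Prime levels and $N=1$ are handled separately.

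This sharper reduction is not cosmetic: it resolves precisely the obstacle you flag in your closing paragraph. The negative certification then only ever requires a complete enumeration of \emph{weight-$2$} eta-quotients (a few thousand lattice points even for $N=36$), never the weight-$6$ or weight-$12$ polytopes with hundreds of millions of points, and the positive certification only needs enough weight-$2$ eta-quotients to reach $\dim M_2(\Gamma_0(N))$. As written, your proposal leaves the $B=6$ claim unjustified and the feasibility of the weight-$4$ and weight-$6$ checks (especially in the negative direction) unresolved, so it describes a correct but incomplete algorithm rather than a finished proof; supplying the weight-$2$ generation argument for composite elliptic-point-free levels, or a precise reference for $B=6$ together with a workable enumeration strategy in weights $4$ and $6$, would close the gap.
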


A necessary condition for eta-quotients to generate the graded ring of
modular forms on $\Gamma_0(N)$ if $N$ is composite is for
$M_2(\Gamma_0(N))$ to be spanned by eta-quotients.  Numerical evidence
suggests that it is only possible to find enough eta-quotients to span
$M_{2}(\Gamma_{0}(N))$ if $N$ is sufficiently composite. We make this
precise in our next result.

\begin{thm}
\label{expobound}
Suppose that $f(z) = \prod_{\delta | N} \eta(\delta z)^{r_{\delta}} \in M_{k}(\Gamma_{0}(N))$. Then we have
\[
  \sum_{\delta | N} |r_{\delta}| \leq 2k \prod_{p | N} \left(\frac{p+1}{p-1}\right)^{\min \{2, \ord_{p}(N)\}}.
\]
\end{thm}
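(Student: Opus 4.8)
The plan is to bound $\sum_{\delta\mid N}|r_\delta|$ by controlling the orders of vanishing of $f$ at the cusps of $X_0(N)$. Since $f\in M_k(\Gamma_0(N))$ is holomorphic, Ligozat's formula gives, for each divisor $d\mid N$, a nonnegative quantity
\[
  \ord_{d}(f) = \frac{N}{24}\sum_{\delta\mid N}\frac{\gcd(d,\delta)^2 r_\delta}{\gcd(d,N/d)\,d\,\delta}\ \ge 0.
\]
Summing these inequalities over all $d\mid N$, each weighted by the number $\phi(\gcd(d,N/d))$ of cusps with denominator $d$, and using the valence formula, the total order of vanishing of $f$ over all cusps is at most $\frac{k}{12}[\SL_2(\Z):\Gamma_0(N)]$. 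So the first step is to set up the linear system: the vector $(r_\delta)_\delta$ is constrained by finitely many linear inequalities (nonnegativity of each $\ord_d$) together with one linear equality coming from the valence formula bounding the sum of the $\ord_d$'s. I want to extract from this that $\sum|r_\delta|$ is bounded by $k$ times an explicit arithmetic function of $N$.

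The key is to invert Ligozat's linear map. Write $v_d = \frac{24}{N}\gcd(d,N/d)\cdot\ord_d(f) = \sum_{\delta\mid N}\frac{\gcd(d,\delta)^2}{\delta}r_\delta$. The matrix $A = \left(\frac{\gcd(d,\delta)^2}{\delta}\right)_{d,\delta\mid N}$ is the central object, and the crucial fact is that it factors multiplicatively over the prime-power structure of $N$: if $N=\prod p^{a_p}$, then $A$ is a tensor product over $p$ of the corresponding matrices $A_p$ for the prime-power levels $p^{a_p}$. Hence $A^{-1}$ also factors as a tensor product, and $\|A^{-1}\|$ in the appropriate norm is the product of the local norms. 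So the second step is to reduce to the prime-power case $N = p^a$ and compute (or bound) the entries of $A_p^{-1}$ there; one finds the relevant local quantity is governed by $\frac{p+1}{p-1}$, with the exponent $\min\{2,a\}$ appearing because for $a\ge 2$ the "interior" divisors $p,\dots,p^{a-1}$ all have $\gcd(d,N/d)$ behaving the same way, so the bound saturates. The factor $2k$ out front will come from bounding $\sum_d\ord_d(f)$ via the index $[\SL_2(\Z):\Gamma_0(N)] = N\prod_{p\mid N}(1+1/p)$ together with the valence formula, and absorbing the combinatorial factors $\phi(\gcd(d,N/d))$.

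Concretely, I would: (i) record that $r_\delta = \sum_{d\mid N}(A^{-1})_{\delta d}\,v_d$ with all $v_d\ge 0$, so $\sum_\delta|r_\delta|\le \big(\max_{\delta,d}|(A^{-1})_{\delta d}|\cdot(\text{something})\big)\sum_d v_d$ — more carefully, $\sum_\delta |r_\delta| \le \sum_d\big(\sum_\delta |(A^{-1})_{\delta d}|\big) v_d$; (ii) bound $\sum_d v_d$ (equivalently a weighted sum of $\ord_d(f)$) by the valence formula in terms of $k$ and the index; (iii) bound $\sum_\delta|(A^{-1})_{\delta d}|$ using the tensor factorization and an explicit computation at each prime power. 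The main obstacle is step (iii): getting a clean closed form, or at least the stated clean bound, for the row sums of $A_p^{-1}$ at a prime power $p^a$, and checking that the constant is exactly $\left(\frac{p+1}{p-1}\right)^{\min\{2,a\}}$ rather than something weaker. This likely requires explicitly diagonalizing or triangulating the small matrix $\left(\frac{\gcd(p^i,p^j)^2}{p^j}\right)_{0\le i,j\le a}$ and tracking signs carefully so that the absolute values of the inverse entries telescope to the claimed product.
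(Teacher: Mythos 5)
Your plan is essentially the paper's proof recast in matrix language: the columns of your $A^{-1}$ are precisely the exponent vectors of the eta-quotients $E_{d,N}$ constructed in Lemma~\ref{magicetas} (each vanishing only at the cusps of denominator $d$), your tensor factorization over the primes dividing $N$ is the paper's induction on the number of distinct prime factors, and your step (i) is the paper's concluding step of writing $f=\prod_{d\mid N}E_{d,N}^{r_d}$ with $r_d\ge 0$ and applying the triangle inequality. The one computation you defer as the main obstacle --- that the column sums of $|A_p^{-1}|$ at a prime power come out to exactly $\left(\frac{p+1}{p-1}\right)^{\min\{2,a\}}$ per unit of weight --- is exactly what the paper settles by exhibiting those columns explicitly as $F^{p}/(F|V(p))$, $(F|V(p^{s}))^{p^{2}+1}/\bigl((F|V(p^{s-1}))^{p}(F|V(p^{s+1}))^{p}\bigr)$, and $(F|V(p^{m}))^{p}/(F|V(p^{m-1}))$, each transfer multiplying the weight by $p-1$ (resp.\ $(p-1)^{2}$) and the sum $\sum_\delta|r_\delta|$ by $p+1$ (resp.\ $(p+1)^{2}$); note also that the paper normalizes each column by its weight $k_d$ and uses additivity of weight, rather than the valence formula, which avoids the bookkeeping with $\phi(\gcd(d,N/d))$ and the index.
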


\begin{rem}
The bound is sharp. For each $N$, there is a weight $k$ and an eta-quotient $f(z) \in M_{k}(\Gamma_{0}(N))$ for which the bound is achieved.
\end{rem}

If $k$ and the number of divisors of $N$ are fixed, the number of
sequences of integers $r_{\delta}$ that satisfy the inequality in
Theorem~\ref{expobound} is bounded. Since $\dim M_{k}(\Gamma_{0}(N))$
tends to infinity with $N$, it follows that for a fixed $k$ and a
given number of prime divisors of $N$, there are only finitely many
spaces $M_{k}(\Gamma_{0}(N))$ that are spanned by
eta-quotients. Accordingly, a space $M_{k}(\Gamma_{0}(N))$ will be
spanned by eta-quotients only if $N$ is ``sufficiently composite'' in
relation to its size.

\begin{rem}
Theorem~\ref{expobound} and the consequences regarding
$M_{k}(\Gamma_{0}(N))$ being spanned by eta-quotients were first
obtained by Soumya Bhattacharya in 2011 who is (as of this writing) a
Ph.D. student of Professor Don Zagier at the University of Bonn. 
Bhattacharya's work is not yet available in manuscript form.
\end{rem}

Our next result gives examples of spaces that are not spanned by eta-quotients.

\begin{cor}
\label{primelevel}
If $p$ is prime, then $M_{2}(\Gamma_{0}(4p))$ is spanned by eta-quotients
if and only if $p \leq 13$.
\end{cor}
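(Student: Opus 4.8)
The plan is to use Theorem~\ref{expobound} to get an upper bound on the number of eta-quotients in $M_2(\Gamma_0(4p))$ and compare it against $\dim M_2(\Gamma_0(4p))$. First I would compute the dimension: by the standard dimension formulas for $\Gamma_0(N)$ (or Riemann--Roch on $X_0(4p)$), $\dim M_2(\Gamma_0(4p))$ grows linearly in $p$; explicitly one has $\dim M_2(\Gamma_0(4p)) = \dim S_2(\Gamma_0(4p)) + \varepsilon_\infty(4p) - 1$ where $\varepsilon_\infty(4p)$ is the number of cusps, and this works out to something like $\tfrac{p+1}{3}$ up to a bounded error depending on $p \bmod 12$. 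The point is that it is $\sim p/3$.

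Next I would bound the number of eta-quotients. For $N = 4p$ with $p > 2$ prime, the divisors are $1, 2, 4, p, 2p, 4p$, so $d(N) = 6$ and an eta-quotient is determined by a vector $(r_\delta)$ with six integer entries. Theorem~\ref{expobound} with $k = 2$ gives
\[
  \sum_{\delta \mid 4p} |r_\delta| \leq 4 \left(\frac{3}{1}\right)^{2} \left(\frac{p+1}{p-1}\right)^{1} = \frac{36(p+1)}{p-1},
\]
using $\ord_2(4p) = 2$ and $\ord_p(4p) = 1$. For $p \geq 13$ this right-hand side is at most $42$, and for large $p$ it tends to $36$. Hence the number of integer vectors $(r_\delta) \in \Z^6$ satisfying the $\ell^1$-bound is bounded independently of $p$ (it is $O(B^6)$ where $B$ is the bound, hence uniformly bounded for $p \geq 13$). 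Since a space of dimension $\dim M_2(\Gamma_0(4p)) \sim p/3$ cannot be spanned by a set of at most $C$ vectors once $p/3 > C$, this already shows $M_2(\Gamma_0(4p))$ is \emph{not} spanned by eta-quotients for all sufficiently large $p$; I would then check directly (e.g.\ by the explicit count of lattice points in the Newman--Ligozat polytope, as in the LattE computation mentioned above) that the threshold is exactly $p = 13$, i.e.\ that it fails already for $p = 17, 19, 23$ and for every prime in between up to wherever the crude bound takes over.

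For the forward direction — that $M_2(\Gamma_0(4p))$ \emph{is} spanned by eta-quotients for $p \in \{2,3,5,7,11,13\}$ (note $p=2$ gives level $8$, covered by the $2^k$ analysis) — I would exhibit, for each such $p$, an explicit spanning set of eta-quotients. Since these are finitely many small levels ($8, 12, 20, 28, 44, 52$), this is a finite computation: for each level list enough eta-quotients from the polytope of valid exponent vectors, compute their $q$-expansions to sufficient precision (the Sturm bound for $M_2(\Gamma_0(4p))$), and verify the resulting vectors span. The main obstacle is really bookkeeping: making the crude bound from Theorem~\ref{expobound} tight enough in the window $13 < p \lesssim 40$ (where $\dim M_2 \sim p/3$ is not yet obviously larger than the number of admissible exponent vectors) so that one does not have to do a separate LattE-style lattice-point count for each of those primes; if that cannot be arranged cleanly, the honest fallback is a short case check for those few intermediate primes, together with the explicit spanning sets for $p \leq 13$.
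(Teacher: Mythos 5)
Your overall strategy is the same as the paper's: use Theorem~\ref{expobound} with $k=2$ to bound the exponent vectors (you correctly get $\sum_{\delta\mid 4p}|r_\delta|\le 36\cdot\frac{p+1}{p-1}$, together with $\sum_\delta r_\delta = 4$), compare the resulting count of admissible eta-quotients against $\dim M_2(\Gamma_0(4p))\sim p/3$, and finish with explicit computations for small $p$. However, there is a genuine quantitative gap at the step you wave at as ``bookkeeping.'' The number of integer vectors in $\Z^6$ with $\ell^1$-norm at most $36$ is on the order of $10^8$, so the crossover point where $p/3$ exceeds that crude count is around $p\sim 10^9$; the ``short case check for those few intermediate primes'' would then be a computation over tens of millions of levels, which is not a finite check in any practical sense and is not how the argument can be made to close. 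The $\ell^1$ bound alone is nowhere near enough.

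What actually closes the argument — and what the paper does — is to enumerate the exponent vectors subject to \emph{all} the constraints simultaneously: weight exactly $2$, the Newman congruences \eqref{congruenceconditions}, and, crucially, non-negativity of the Ligozat order at every cusp of $X_0(4p)$. This cuts the count from $\sim 10^8$ down to an explicit number depending only on $p\bmod 24$ (between $9$ and $37$ for $p>11$), which is what makes the comparison with $\dim M_2(\Gamma_0(4p))$ effective: the dimension overtakes the eta-quotient count already at $p>47$, leaving only the primes $p\le 73$ to handle by direct computation (including exhibiting spanning sets for $p\le 13$, as you propose). So your outline is right, but the residue-class-by-residue-class count of lattice points in the full Newman--Ligozat polytope is the essential content of the proof, not an optional refinement; without it the asymptotic argument does not reduce the problem to a feasible finite verification.
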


Instead of insisting that every element of $M_{k}(\Gamma_{0}(N))$ be
expressible as a linear combination of eta-quotients which are
holomorphic on $\mathbb{H}$, we could instead ask if every element of
$M_{k}(\Gamma_{0}(N))$ is expressible as a linear combination of
\emph{weakly holomorphic} eta-quotients of weight $k$ and level $N$
with a pole only at infinity.

For example, $M_{2}(\Gamma_{0}(22))$ has dimension $5$, but only
contains $4$ eta-quotients, namely
\[
  \frac{\eta(2z)^{4} \eta(22z)^{4}}{\eta(z)^{2} \eta(11z)^{2}}, \eta(z)^{2} \eta(11z)^{2}, \eta(2z)^{2} \eta(22z)^{2}, \text{ and }
  \frac{\eta(z)^{4} \eta(11z)^{4}}{\eta(2z)^{2} \eta(22z)^{2}}.
\]
The basis element $f(z) = q^{4} + q^{6} + q^{8} + q^{10} - q^{11} +
\cdots$ is not expressible as a linear combination of these
four. However, if $g(z) = \frac{\eta(22z)^{22} \eta(z)}{\eta(2z)^{2}
  \eta(11z)^{11}}$, then $f(z) g(z) \in M_{12}(\Gamma_{0}(22))$ and
every holomorphic modular form in $M_{12}(\Gamma_{0}(22))$ is a linear
combination of (holomorphic) eta-quotients. Since $g(z)$ is
non-vanishing at all cusps except infinity, this implies that $f(z)$
is a linear combination of weakly holomorphic eta-quotients with poles
only at infinity.

Next, we will study the levels $N$ for which every form in
$M_{k}(\Gamma_{0}(N))$ is a linear combination of weakly holomorphic
eta-quotients with a pole only at infinity. To state our result, let
$\mathcal{R}_{k}(\Gamma_{0}(N))$ be the vector space of
all weakly holomorphic modular forms that have a pole only at
the cusp at infinity. Let $\mathcal{E}_{k}(\Gamma_{0}(N))$ be the subspace
of $\mathcal{R}_{k}(\Gamma_{0}(N))$ consisting of forms that are linear
combinations of eta-quotients with a pole only at infinity. Note
that $\mathcal{R}_{0}(\Gamma_{0}(N))$ and $\mathcal{E}_{0}(\Gamma_{0}(N))$
are rings, and $\mathcal{R}_{k}(\Gamma_{0}(N))$ and
$\mathcal{E}_{k}(\Gamma_{0}(N))$ naturally have the structure of modules
over $\mathcal{R}_{0}(\Gamma_{0}(N))$ and $\mathcal{E}_{0}(\Gamma_{0}(N))$, respectively.

Our next result is the following. Let $\epsilon_{2}(\Gamma_{0}(N))$ and
$\epsilon_{3}(\Gamma_{0}(N))$ denote the number of orbits of elliptic points
of order $2$ and $3$ for $\Gamma_{0}(N)$, respectively.

\begin{thm}
\label{finitecodim}
\begin{enumerate}
\item Suppose that $N$ is composite or $N \in \{ 2, 3, 5, 7, 13 \}$.
Suppose also that either $\epsilon_{3}(\Gamma_{0}(N)) = 0$
or $6 | k$, and either $\epsilon_{2}(\Gamma_{0}(N)) = 0$ or $4 | k$.
Then $\mathcal{E}_{k}(\Gamma_{0}(N))$ has finite codimension in $\mathcal{R}_{k}(\Gamma_{0}(N))$.
\item If $N$ is prime and $N = 11$ or $N \geq 17$, then
$\mathcal{E}_{k}(\Gamma_{0}(N))$ has infinite codimension in $\mathcal{R}_{k}(\Gamma_{0}(N))$ for all non-negative even integers $k$.
\end{enumerate}
\end{thm}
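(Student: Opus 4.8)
The plan is to treat the two parts by rather different methods, since (1) is a ``there are enough eta-quotients'' statement and (2) is an obstruction statement.

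For part (1), the key observation is that $\mathcal{R}_k(\Gamma_0(N))$ is a finitely generated module over the ring $\mathcal{R}_0(\Gamma_0(N))$, and likewise $\mathcal{E}_k(\Gamma_0(N))$ is a finitely generated module over $\mathcal{E}_0(\Gamma_0(N))$; so it suffices to (a) show $\mathcal{E}_0(\Gamma_0(N))$ has finite codimension in $\mathcal{R}_0(\Gamma_0(N))$, and (b) show that a suitable finite generating set of $\mathcal{R}_k$ over $\mathcal{R}_0$ can be taken inside $\mathcal{E}_k$, i.e.\ built from eta-quotients, at least after the hypotheses on $k$ rule out the contributions of elliptic points. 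First I would construct, for a composite $N$ (or $N\in\{2,3,5,7,13\}$), an eta-quotient $t(z)\in\mathcal{E}_0(\Gamma_0(N))$ that is a Hauptmodul-like function with a pole only at infinity — more precisely a weakly holomorphic modular function, holomorphic and nonzero at every cusp other than $\infty$, with a pole of some order $m$ at $\infty$. Such a function exists precisely because the divisor-at-cusps map for eta-quotients (governed by Ligozat's theorem quoted above) has image of finite index in the group of degree-zero divisors supported on the cusps — and the hypothesis that $N$ is composite or $N\in\{2,3,5,7,13\}$ is exactly the condition under which the number of cusps is large enough (relative to the genus obstruction) to find such a $t$; for prime $N\notin\{2,3,5,7,13\}$ the curve $X_0(N)$ has positive genus and only two cusps, and no such eta-quotient can exist, which is what forces the dichotomy. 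Given $t$, the ring $\C[t]$ sits inside $\mathcal{E}_0(\Gamma_0(N))$, and $\mathcal{R}_0(\Gamma_0(N))$ is a finitely generated $\C[t]$-module (it is the ring of functions regular away from $\infty$ on the curve), so finite codimension of $\mathcal{E}_0$ in $\mathcal{R}_0$ follows once we know finitely many more eta-quotient functions together with $t$ generate a finite-codimension subring. For the module step, I would multiply a fixed basis of $M_k(\Gamma_0(N))$ — which by Theorem~\ref{hologen}'s circle of ideas, or by a direct argument, can be arranged so that after multiplying by a single holomorphic eta-quotient of high enough weight everything becomes a combination of holomorphic eta-quotients — by negative powers of an eta-quotient vanishing only at $\infty$, reducing to the already-known holomorphic case. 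The hypotheses ``$\epsilon_3=0$ or $6\mid k$'' and ``$\epsilon_2=0$ or $4\mid k$'' enter precisely to guarantee that the relevant weight-$k$ eta-quotients actually exist (Newman's congruences on $\sum \delta r_\delta$ interact with the elliptic points through the valence formula).

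For part (2), suppose $N$ is prime with $N=11$ or $N\ge 17$. Then $X_0(N)$ has genus $g\ge 1$ and exactly two cusps, $0$ and $\infty$. Any eta-quotient of level $N$ is built only from $\eta(z)$ and $\eta(Nz)$, so its divisor is supported entirely on the two cusps; by Ligozat's formula the order at $0$ and the order at $\infty$ are determined by $r_1,r_N$ via an explicit $2\times 2$ integer matrix, and in particular the divisor of any level-$N$ eta-quotient is a specific integer multiple of $(N-1)\big([\infty]-[0]\big)$ up to the weight contribution. Now if $f\in\mathcal{E}_k(\Gamma_0(N))$ is a linear combination of eta-quotients each with a pole only at $\infty$, each such eta-quotient $h$ has $\mathrm{div}(h) = a_h[\infty] + (\text{nonneg.\ multiple of }[0])$ with the cusp $0$ coefficient forced to lie in a fixed arithmetic progression (a multiple of $N-1$, essentially) — because there is only one degree of freedom $r_1$ (with $r_N$ determined) after imposing holomorphy away from $\infty$. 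Consequently the possible orders of vanishing at $\infty$ of elements of $\mathcal{E}_k(\Gamma_0(N))$ omit infinitely many residue classes — or more robustly: the $q$-expansions of eta-quotients with a pole only at $\infty$ span a space whose ``gap set'' at $\infty$ has positive density, whereas $\mathcal{R}_k(\Gamma_0(N))$ contains forms of every sufficiently negative order at $\infty$ (by Riemann--Roch, once the pole order exceeds $2g-2+$const there is a form with any prescribed pole order). Comparing the $q$-order filtrations of $\mathcal{E}_k$ and $\mathcal{R}_k$ then exhibits infinitely many jumps present in $\mathcal{R}_k$ but absent in $\mathcal{E}_k$, giving infinite codimension. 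I would make this precise by showing: the set $\{\,-\mathrm{ord}_\infty g : g\in\mathcal{E}_k,\ g\ne 0\,\}$ is contained in a finite union of arithmetic progressions with common difference dividing $N-1$ (coming from the rank-one lattice of eta-quotient divisors together with multiplication by a single function with a double-sided pole, but here there is no such function since $0$ would then acquire a pole), hence has density $\le c/(N-1)<1$ when $N>$ small, while the analogous set for $\mathcal{R}_k$ is cofinite.

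The main obstacle is part (1), specifically the interplay in the module step: producing a single generating eta-quotient $t$ with a pole only at $\infty$ and simultaneously controlling a finite $\mathcal{E}_0$-module generating set of $\mathcal{E}_k$ inside $\mathcal{R}_k$, while respecting the elliptic-point divisibility constraints on $k$. The existence of $t$ amounts to a nonvanishing/surjectivity statement about the Ligozat divisor map onto an explicit finite-index sublattice of $\mathrm{Div}^0(\text{cusps})$, and verifying it uniformly for all composite $N$ (not just a finite check) requires a genuine argument — presumably a direct construction of the exponents $r_\delta$ supported on, say, $\delta\in\{1,p,N\}$ for a prime $p\mid N$ — rather than a case analysis; pinning this down cleanly, together with the bookkeeping that the resulting $t$ has integral order at every cusp and satisfies Newman's congruences, is where the real work lies. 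Part (2), by contrast, is essentially a counting argument once the rank-one structure of level-$N$ eta-quotient divisors for prime $N$ is made explicit.
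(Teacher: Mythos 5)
Your part (2) is essentially the paper's argument and is sound: for prime $N$ every weight-$k$ eta-quotient is $\eta(z)^{a}\eta(Nz)^{2k-a}$, with $\ord_\infty=\frac{2kN-a(N-1)}{24}$, so the achievable pole orders at $\infty$ lie in a single arithmetic progression of common difference $r=\frac{N-1}{\gcd(24,N-1)}$, while Riemann--Roch puts every sufficiently large pole order into $\mathcal{R}_k(\Gamma_0(N))$. One imprecision: the common difference is not merely ``dividing $N-1$'' but exactly $\frac{N-1}{\gcd(24,N-1)}$, and the whole dichotomy is that this exceeds $1$ if and only if $N-1\nmid 24$, i.e.\ exactly for $N=11$ and $N\ge 17$; your ``when $N>$ small'' glosses over the fact that the split is a divisibility condition, not a size condition.

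Part (1) has a genuine gap, which you partly flag yourself. First, your proposed route to showing $\mathcal{E}_0(\Gamma_0(N))$ has finite codimension in $\mathcal{R}_0(\Gamma_0(N))$ --- that the Ligozat divisor map has image of finite index in $\DIV^0(\text{cusps})$ --- is both unproven and, more importantly, insufficient even if granted: finite index only guarantees that the pole orders at $\infty$ realized by eta-quotient functions contain a full arithmetic progression $I\cdot\N$ (for $I$ the index/exponent), not a cofinite set of integers. What is actually needed is two elements of $\mathcal{E}_0$ with coprime pole orders at $\infty$, and this is precisely what the paper constructs explicitly: with $E_{N,N}$ the eta-quotient of Lemma~\ref{magicetas} vanishing only at $\infty$ and $1<d<e$ divisors of the composite $N$, the functions $f_1=\Delta(z)^{re-e+1}\Delta(dz)^{e-1}/E_{N,N}^{12e}$ and $f_2=\Delta(z)^{re-d+1}\Delta(ez)^{d-1}/E_{N,N}^{12e}$ have the same pole order $M$ at $\infty$ but different coefficients of $q^{-M+1}$, so $f_1$ and $f_1-f_2$ realize the coprime orders $M$ and $M-1$, whence every $n\ge M^2-3M+1$ is realized. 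Second, your module step is over-engineered and leans on a claim that is false in general (that a basis of $M_k$ becomes a combination of holomorphic eta-quotients after multiplying by one high-weight eta-quotient --- this is essentially Theorem~\ref{weakgen}, which fails or is unknown for various $N$, e.g.\ $121$, $209$): all that is needed is a \emph{single} nonzero element $g\in\mathcal{E}_k(\Gamma_0(N))$, which Lemma~\ref{whenetas} supplies exactly under the stated divisibility hypotheses on $k$; then $g\cdot\mathcal{E}_0\subseteq\mathcal{E}_k$ already realizes all sufficiently large pole orders at $\infty$, and comparison with the $\ord_\infty$-filtration of $\mathcal{R}_k$ gives finite codimension. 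The primes $2,3,5,7,13$ are handled by noting the eta-quotient Hauptmodul $\bigl(\eta(z)/\eta(pz)\bigr)^{24/(p-1)}$ has a simple pole at $\infty$, so $\mathcal{E}_0=\mathcal{R}_0$ there.
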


Our next result gives a classification of the levels $N$ for which
every element of $M_{k}(\Gamma_{0}(N))$ has an expression in terms of
weakly holomorphic eta-quotients.

\begin{thm}
\label{weakthm}
Let $N$ be a positive integer. Then the following are equivalent.
\begin{enumerate}
\item For all positive even integers $k$, $M_k(\Gamma_0(N)) \subset \mathcal{R}_k(\Gamma_0(N))$.  In other words, every element of
$M_{k}(\Gamma_{0}(N))$ is expressible as a linear combination of
weakly holomorphic weight $k$ eta-quotients of level $N$ with poles
only at infinity.
\item $\mathcal{R}_{k}(\Gamma_{0}(N)) = \mathcal{E}_{k}(\Gamma_{0}(N))$
for all non-negative even integers $k$.
\item $\mathcal{R}_{2}(\Gamma_{0}(N)) = \mathcal{E}_{2}(\Gamma_{0}(N))$.
\end{enumerate}
\end{thm}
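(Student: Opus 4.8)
The plan is to prove the cycle of implications $(2)\Rightarrow(1)\Rightarrow(3)\Rightarrow(2)$; this also yields $(3)\Rightarrow(1)$ through $(2)$. The implication $(2)\Rightarrow(1)$ is immediate: a holomorphic modular form has no poles, so $M_k(\Gamma_0(N))\subseteq\mathcal{R}_k(\Gamma_0(N))$ for every $k$, and by hypothesis $\mathcal{R}_k(\Gamma_0(N))=\mathcal{E}_k(\Gamma_0(N))$, whose elements are by definition linear combinations of weakly holomorphic eta-quotients of level $N$ with poles only at infinity. Two facts will be used repeatedly. First, the linear map sending $(r_\delta)_{\delta\mid N}$ to the tuple of orders of vanishing of $\prod_{\delta\mid N}\eta(\delta z)^{r_\delta}$ at the cusps $c/d$, $d\mid N$, is a bijection of $\mathbb{Q}^{d(N)}$: an eta-quotient with all cusp orders $0$ is a modular function holomorphic on $X_0(N)$, hence constant, hence trivial by comparing $q$-expansions. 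Second (the "auxiliary eta-quotient lemma"): for every $N$ there is a holomorphic eta-quotient $h$ of level $N$ that is non-vanishing at every cusp other than $\infty$ and vanishes at $\infty$ to arbitrarily large order, with weight $k_h$ a positive even integer. Indeed, choosing the preimage under the bijection of the tuple that is $0$ at every cusp except $\infty$ gives a rational ray of exponent tuples on which the order at $\infty$ is a nonzero functional; a suitable positive integer multiple of a ray generator clears denominators and satisfies \eqref{congruenceconditions}, and its powers have arbitrarily large order at $\infty$.

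For $(1)\Rightarrow(3)$, let $f\in\mathcal{R}_2(\Gamma_0(N))$ have a pole of order at most $m$ at $\infty$, and choose $h$ as above vanishing to order greater than $m$ at $\infty$. Then $fh\in M_{2+k_h}(\Gamma_0(N))$, since $f$ is holomorphic on $\mathbb{H}$ and at the finite cusps, $h$ is holomorphic everywhere, and the zero of $h$ at $\infty$ absorbs the pole of $f$. Applying $(1)$, write $fh=\sum_i c_iE_i$ with the $E_i$ weakly holomorphic eta-quotients of level $N$, weight $2+k_h$, and poles only at $\infty$. Then $f=\sum_i c_i(E_i/h)$, and each $E_i/h$ is a level-$N$ eta-quotient of weight $2$ (its exponent tuple is $(r_{E_i})-(r_h)$, and the conditions \eqref{congruenceconditions} are preserved under subtraction) with poles only at $\infty$, since dividing by $h$ creates no pole at a finite cusp where $h$ is non-vanishing. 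Hence $f\in\mathcal{E}_2(\Gamma_0(N))$, so $\mathcal{R}_2(\Gamma_0(N))=\mathcal{E}_2(\Gamma_0(N))$.

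The content is in $(3)\Rightarrow(2)$. Consider first the ring statement $\mathcal{R}_0(\Gamma_0(N))=\mathcal{E}_0(\Gamma_0(N))$. Here $\mathcal{R}_0(\Gamma_0(N))$ is the coordinate ring of the smooth affine curve $X_0(N)\setminus\{\infty\}$, hence a normal domain with fraction field $\mathbb{C}(X_0(N))$. Using the bijectivity above one checks that the cone of weight-$0$ eta-quotient exponents holomorphic at every finite cusp is full dimensional — here one also needs that, writing the constant functional as $\sum_{d\mid N}c_d\phi_d$ in the basis of Ligozat functionals, the coefficient $c_N$ is nonzero, a short multiplicative computation reducing to prime powers — so there is a weight-$0$ level-$N$ eta-quotient with strictly positive order at every finite cusp, and in particular a non-constant $g_0\in\mathcal{E}_0(\Gamma_0(N))$ with a pole only at $\infty$. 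Since $X_0(N)\setminus\{\infty\}$ is the normalization of the affine $g_0$-line in the finite extension $\mathbb{C}(X_0(N))/\mathbb{C}(g_0)$, the ring $\mathcal{R}_0(\Gamma_0(N))$ is module-finite over $\mathcal{E}_0(\Gamma_0(N))$. Because $\mathcal{R}_2(\Gamma_0(N))=\mathcal{E}_2(\Gamma_0(N))$ is nonzero, writing a general element of $\mathcal{R}_2$ as an eta-quotient combination divided by a fixed eta-quotient summand shows the field generated by weight-$0$ level-$N$ eta-quotients is all of $\mathbb{C}(X_0(N))$; combined with the strictly positive eta-quotient (which forces the monoid of exponents to generate the full lattice) this gives $\operatorname{Frac}\mathcal{E}_0(\Gamma_0(N))=\mathbb{C}(X_0(N))$. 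Finally $\mathcal{E}_0(\Gamma_0(N))$ is the semigroup ring of a saturated affine semigroup (the eta-quotients being linearly independent), hence normal. Given $f\in\mathcal{R}_0(\Gamma_0(N))$, multiplication by $f$ preserves the finitely generated faithful $\mathcal{E}_0$-module $\mathcal{E}_2(\Gamma_0(N))=\mathcal{R}_2(\Gamma_0(N))$, so the Cayley–Hamilton (determinant) trick shows $f$ is integral over $\mathcal{E}_0(\Gamma_0(N))$; as $f\in\operatorname{Frac}\mathcal{E}_0(\Gamma_0(N))$ and the ring is normal, $f\in\mathcal{E}_0(\Gamma_0(N))$.

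For even $k\ge 2$ one wants to reduce to the weight $0$ case: given $f\in\mathcal{R}_k(\Gamma_0(N))$ it suffices to produce a level-$N$ eta-quotient $E$ of weight $k$ with $0\le\ord_Q(E)\le\ord_Q(f)$ at every finite cusp $Q$, for then $f/E\in\mathcal{R}_0(\Gamma_0(N))=\mathcal{E}_0(\Gamma_0(N))$, and multiplying back puts $f$ in $\mathcal{E}_k(\Gamma_0(N))$. When $X_0(N)$ has no elliptic points — and more generally once $k$ is divisible by an appropriate small integer, as in the hypotheses of Theorem~\ref{finitecodim} — one can take $\ord_Q(E)=0$ for all finite $Q$, so that $\bigoplus_k\mathcal{R}_k(\Gamma_0(N))$ is generated over $\mathcal{R}_0(\Gamma_0(N))$ in weight $\le 2$ and we are done. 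The genuine difficulty — and the step I expect to be the main obstacle — is to handle \emph{every} even $k$, without those divisibility hypotheses: because eta-quotients are non-vanishing on $\mathbb{H}$, no single eta-quotient can match the forced order of a weight-$k$ form at an order-$3$ elliptic point, so one must instead produce eta-quotient \emph{combinations} realizing the required cuspidal and orbifold divisor classes, and show the span $\mathcal{E}_k(\Gamma_0(N))$ is correspondingly large. This is where the polytope and finiteness analysis behind Theorems~\ref{hologen}, \ref{expobound}, and \ref{finitecodim} is essential — a careful accounting, via Ligozat's formula, of which divisor classes are hit by eta-quotients of a given weight and level — and it is also this step that produces the dichotomy recorded in Theorem~\ref{finitecodim}. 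Once $\mathcal{R}_k(\Gamma_0(N))=\mathcal{E}_k(\Gamma_0(N))$ for all non-negative even $k$, statement $(2)$ holds and the cycle closes.
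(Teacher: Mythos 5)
Your cycle $(2)\Rightarrow(1)\Rightarrow(3)$ is fine and close to the paper's own argument: the paper also proves $(1)\Rightarrow(2)$ by multiplying by a power of an eta-quotient $E_{N,N}$ that vanishes only at $\infty$ (its Lemma~\ref{magicetas}; your construction of $h$ via invertibility of the Ligozat order matrix is an acceptable substitute) and then dividing back. The problems are concentrated in $(3)\Rightarrow(2)$, which in your scheme carries the entire content, and there are two genuine gaps there. First, your normality argument for $\mathcal{E}_0(\Gamma_0(N))$ rests on the claim that the weight-$0$ eta-quotients are linearly independent, so that $\mathcal{E}_0$ is literally a semigroup ring. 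Eta-quotients satisfy many linear relations (the paper's own example exhibits $4{,}988$ eta-quotients inside the $13$-dimensional space $M_2(\Gamma_0(36))$; already finitely many weight-$0$ eta-quotients with a fixed pole order at $\infty$ must be dependent once they outnumber the relevant Riemann--Roch dimension). So $\mathcal{E}_0$ is a proper quotient of the abstract semigroup ring, the saturation/normality conclusion does not follow, and the Cayley--Hamilton step ``integral over $\mathcal{E}_0$ and in $\operatorname{Frac}\mathcal{E}_0$, hence in $\mathcal{E}_0$'' is unsupported. Second, you explicitly leave the positive-weight case open, flagging the elliptic-point obstruction as ``the main obstacle'' to be handled by an unexecuted polytope analysis. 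That is not a sketchable detail; as written the proof of $(3)\Rightarrow(2)$ is incomplete.

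The obstacle you flag is, however, illusory, and seeing why leads to the paper's much shorter route. Hypothesis $(3)$ says $\mathcal{R}_2(\Gamma_0(N))=\mathcal{E}_2(\Gamma_0(N))$, and $\mathcal{R}_2$ is always nonzero; hence $\mathcal{E}_2$ is non-empty, and Lemma~\ref{whenetas} then forces $\epsilon_2(\Gamma_0(N))=\epsilon_3(\Gamma_0(N))=0$ (so in particular $N\notin\{2,3,5,7,13\}$), while Theorem~\ref{finitecodim}(2) rules out the remaining primes, so $N$ is composite. Thus under $(3)$ there are no elliptic points at all, the graded ring of modular forms is generated in weight $2$, and since $M_2\subseteq\mathcal{R}_2=\mathcal{E}_2$ and products of eta-quotients are eta-quotients, one gets $(3)\Rightarrow(1)$ directly; the weight-$0$ case of $(2)$ then follows from $(1)$ by the isomorphisms $f\mapsto fE_{N,N}$ between $\mathcal{R}_k$ and $\mathcal{R}_{k+r}$ (and between $\mathcal{E}_k$ and $\mathcal{E}_{k+r}$), with no commutative algebra needed. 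You should reorganize to prove $(3)\Rightarrow(1)$ first, using Lemma~\ref{whenetas} and Theorem~\ref{finitecodim} to pin down $N$, rather than attacking $(3)\Rightarrow(2)$ head-on.
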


Theorem~\ref{finitecodim} and the proof of Theorem~\ref{weakthm}
show that in order for $\mathcal{R}_{2}(\Gamma_{0}(N))$ to
equal $\mathcal{E}_{2}(\Gamma_{0}(N))$, it is necessary for
$N$ to be composite and for $\Gamma_{0}(N)$ to have no elliptic points.
Our next result shows that for most small $N$, this is sufficient.

\begin{thm}
\label{weakgen}
If $N \leq 300$ is composite, $\Gamma_{0}(N)$ has no elliptic
points, and $N \not\in \{121,209\}$,
then $\mathcal{R}_{2}(\Gamma_{0}(N)) = \mathcal{E}_{2}(\Gamma_{0}(N))$.
\end{thm}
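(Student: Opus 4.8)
The plan is to verify the claim computationally for each of the finitely many eligible levels, reducing the infinite-codimension obstruction of Theorem~\ref{finitecodim} to a finite linear-algebra check. First I would enumerate the composite $N \le 300$ for which $\Gamma_0(N)$ has no elliptic points, using the standard formulas $\epsilon_2(\Gamma_0(N)) = \prod_{p \mid N}\left(1 + \legen{-1}{p}\right)$ (with the usual convention at $p=2$) and $\epsilon_3(\Gamma_0(N)) = \prod_{p \mid N}\left(1 + \legen{-3}{p}\right)$; the no-elliptic-points condition forces some prime $p \equiv 3 \pmod 4$ and some prime $q \equiv 2 \pmod 3$ to divide $N$ (or the relevant power-of-$2$ or power-of-$3$ conditions). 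For each such $N$ except $121$ and $209$, I then need to exhibit enough weakly holomorphic eta-quotients of weight $2$ and level $N$, with a pole only at infinity, to span $\mathcal{R}_2(\Gamma_0(N))$.

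The key structural reduction is this: by the theory of $q$-expansions at infinity, $\mathcal{R}_2(\Gamma_0(N))$ is spanned, as a module over the ring $\mathcal{R}_0(\Gamma_0(N))$, by finitely many forms, and in fact a weakly holomorphic form in $\mathcal{R}_2(\Gamma_0(N))$ is determined by its principal part plus finitely many more coefficients. So it suffices to produce, for each integer $n$ in a suitable finite range, a weakly holomorphic eta-quotient in $\mathcal{E}_2(\Gamma_0(N))$ whose $q$-expansion at infinity begins $q^{-n} + O(q^{-n+1})$ — i.e., to hit every achievable pole order at infinity with a leading term we can then clear inductively. Equivalently, following the $\mathcal{R}_0$-module structure, one produces a generating set of eta-quotients for $\mathcal{E}_0(\Gamma_0(N))$ as a ring (an "eta-quotient model" for the coordinate ring of $X_0(N)$ minus the cusp at infinity) together with a small number of weight-$2$ eta-quotients that are holomorphic away from infinity and whose orders of vanishing at the cusps, computed via Ligozat's formula quoted above, can be cleared. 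The search for these eta-quotients is exactly lattice-point enumeration inside the Newman--Ligozat polytope: one asks for exponent vectors $\{r_\delta\}$ satisfying the congruences \eqref{congruenceconditions}, giving weight $2$, with Ligozat order of vanishing $\ge 0$ at every cusp $c/d$ with $d \ne N$, and prescribed order at $d = N$.

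Concretely I would carry out: (i) generate the list of eligible $N$; (ii) for each $N$, compute $\dim \mathcal{R}_2(\Gamma_0(N))$ up to any fixed pole order from Riemann--Roch on $X_0(N)$ (genus, cusp data); (iii) enumerate eta-quotients in $\mathcal{E}_2(\Gamma_0(N))$ with pole only at infinity, computing their $q$-expansions to enough precision; (iv) check by Gaussian elimination over $\Q$ that these span the relevant truncation, and invoke the module structure over $\mathcal{E}_0(\Gamma_0(N))$ to conclude equality in all degrees; this is where the argument used in the proof of Theorem~\ref{weakthm} (that equality at one pole order propagates) gets applied. The main obstacle is step (iii)--(iv) for the larger and less-composite $N$ in the range: the polytope of valid exponent vectors may be genuinely small, so one must be clever about which pole orders are forced and show the eta-quotients found actually suffice — and for $N \in \{121, 209\}$ this fails, so part of the work is also checking (or citing a companion computation) that these two genuinely do not work, consistent with their being the stated exceptions. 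A secondary subtlety is bounding the precision needed in the $q$-expansions so that the linear-algebra certificate is rigorous rather than heuristic; this follows from an effective Sturm-type bound for $\Gamma_0(N)$ applied after multiplying into a holomorphic space of known weight.
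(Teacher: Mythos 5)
Your strategy is sound in outline and would, if the computations succeed, prove the theorem, but it takes a genuinely different (and more delicate) reduction than the paper does. The paper never works directly with principal parts in $\mathcal{R}_2(\Gamma_0(N))$: letting $E_{N,N}$ be the weight-$r$ eta-quotient of Lemma~\ref{magicetas} vanishing only at $\infty$, the entire computational content is the verification that the single finite-dimensional space $M_{r+2}(\Gamma_0(N))$ is spanned by \emph{holomorphic} eta-quotients. Dividing by $E_{N,N}$ gives $M_2(\Gamma_0(N)) \subseteq \mathcal{E}_2(\Gamma_0(N))$, weight-$2$ generation of the graded ring (valid since $N$ is composite with no elliptic points) upgrades this to condition (1) of Theorem~\ref{weakthm}, and the implication $(1)\Rightarrow(3)$ there yields $\mathcal{R}_2(\Gamma_0(N)) = \mathcal{E}_2(\Gamma_0(N))$. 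You instead attack $\mathcal{R}_2$ head-on, realizing every achievable order at $\infty$ by eta-quotient combinations and propagating via the $\mathcal{E}_0(\Gamma_0(N))$-module structure. That works, but it obliges you to re-derive two things the paper packages elsewhere: the Riemann--Roch determination of exactly which leading exponents (negative \emph{and} non-negative) occur for elements of $\mathcal{R}_2$, so that your ``suitable finite range'' provably covers all of them; and the fact that $\mathcal{E}_0(\Gamma_0(N))$ contains functions whose pole orders at $\infty$ generate a cofinite additive semigroup, which is precisely the $f_1$, $f_1 - f_2$ construction in the proof of Theorem~\ref{finitecodim} and is what lets the induction on pole order start high enough. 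The paper's detour through $M_{r+2}$ buys a cleaner certificate --- one rank computation in a holomorphic space, with the needed $q$-expansion precision controlled by the valence formula --- whereas your route avoids invoking weight-$2$ generation of the graded ring at the cost of a case-by-case analysis of gap sequences at $\infty$; your handling of the exceptional levels $121$ and $209$ is consistent with the paper, which likewise only reports that the search fails there.
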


\begin{rem}
It appears likely that for $N = 121$ and $N = 209$, the equivalent conditions
of Theorem~\ref{weakthm} are false.
\end{rem}

There are two significant consequences of being able to span spaces of
modular forms using eta-quotients.  First, this provides a means of
computing the Fourier expansions of a basis for
$M_{k}(\Gamma_{0}(N))$. The theory of modular symbols can also be used
to do this, but this process is somewhat inefficient in that it is
first necessary to compute the Fourier expansions of Hecke eigenforms,
and then to build a basis for $M_{k}(\Gamma_{0}(N))$ using
those. There are a number of situations where we are able to quickly
construct bases for spaces $M_{k}(\Gamma_{0}(N))$ with dimension
greater than $2000$ where the modular symbols algorithm is much less
efficient. Second, if $f(z) \in M_{k}(\Gamma_{0}(N))$, it is sometimes
desirable to compute the Fourier expansion of $f(z)$ at other cusps of
$\Gamma_{0}(N)$. There are no general algorithms to accomplish this
task. However, if $f(z)$ is an eta-quotient (or a linear combination
of eta-quotients), it is straightforward to do this, since the
transformation formula for $\eta(z)$ is known for every matrix in
$\SL_{2}(\Z)$.

\section{Modular forms non-zero on $\H$}

Looking at \eqref{eta-def}, it is not difficult to see that any eta-quotient must have integral Fourier coefficients and be non-zero on the upper-half plane.   We will examine to what degree the converse is true.  Kohnen \cite{kohnen_class_mod_funcs} studied a related problem, which we now describe.  Let
\[ f(z) = c q^h \prod_{n\geq 1} (1-q^n)^{c(n)} \in M_k^!(\Gamma_0(N)) \]
where $M_k^!(\Gamma_0(N))$ denotes the space of weight $k$ weakly holomorphic modular forms on $\Gamma_0(N)$, $c$ is a non-zero constant, $h \in \Z$ and each $c(n) \in \C$.  Then he proves the following.
\begin{thmnonum}[Theorem 2 in \cite{kohnen_class_mod_funcs}]
Suppose that $N$ is square-free.  Then $f$ has no zeros or poles on $\H$ if and only if $c(n)$ depends only on the greatest common divisor $(n,N)$.
\end{thmnonum}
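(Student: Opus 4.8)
The plan is to deduce both directions from the elementary product identity
\[
  \prod_{\delta \mid N} \eta(\delta z)^{r_{\delta}}
  \;=\; q^{\frac{1}{24}\sum_{\delta \mid N}\delta r_{\delta}}
  \prod_{n \geq 1}(1-q^{n})^{\sum_{\delta \mid \gcd(n,N)} r_{\delta}},
\]
which holds for any tuple $(r_{\delta})_{\delta \mid N}$ of complex numbers, the left side being read via a fixed branch of $\log\eta$ on $\H$ (such a branch exists since $\eta$ is non-vanishing there). This identity says that the exponent of $(1-q^{n})$ in a level $N$ eta-quotient depends on $n$ only through $\gcd(n,N)$; conversely, given $c(n)$ depending only on $\gcd(n,N)$, M\"obius inversion over the divisor lattice of $N$ produces a (generally complex) tuple $(r_{\delta})$ with $\sum_{\delta \mid d} r_{\delta}=c(d)$ for all $d\mid N$. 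This already settles ``$\Leftarrow$'': for such $(r_{\delta})$ set $g(z)=\prod_{\delta\mid N}\eta(\delta z)^{r_{\delta}}$, so the identity gives $f(z)=c\,q^{s}g(z)$ with $s=h-\tfrac{1}{24}\sum_{\delta\mid N}\delta r_{\delta}$ a constant; since $g$, $q^{s}=e^{2\pi isz}$, and $c$ are non-vanishing on $\H$, so is $f$, and being weakly holomorphic $f$ has no poles on $\H$ either. (This implication uses nothing about $N$.)

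For ``$\Rightarrow$'', I would assume $N$ square-free and $f$ with no zeros or poles on $\H$, so that $\Div(f)$ on $X_{0}(N)$ is supported on the cusps. Using Ligozat's theorem, define the $\Q$-linear map $D\colon \Q^{d(N)}\to W$ taking $(r_{\delta})_{\delta\mid N}$ to $\Div\!\big(\prod_{\delta\mid N}\eta(\delta z)^{r_{\delta}}\big)$, where $W$ is the $\Q$-span of the cusp divisors of $X_{0}(N)$. First I would check $D$ is injective: if $D(r)=0$, then a suitable integral power of $\prod_{\delta\mid N}\eta(\delta z)^{r_{\delta}}$ (chosen to clear denominators in the exponents and to trivialize the finite-order multiplier system) is a modular function on $\Gamma_{0}(N)$ holomorphic and non-vanishing on the compact curve $X_{0}(N)$, hence constant, and then the displayed identity together with M\"obius inversion forces $r=0$.

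Now square-freeness enters decisively: when $N$ is square-free, $\gcd(d,N/d)=1$ for every $d\mid N$, so $X_{0}(N)$ has $\sum_{d\mid N}\varphi(\gcd(d,N/d))=d(N)$ cusps, whence $\dim_{\Q}W=d(N)=\dim_{\Q}\Q^{d(N)}$. An injective linear map between spaces of equal finite dimension is surjective, so $D$ is an isomorphism, and restricting it to the hyperplane $\{\sum_{\delta}r_{\delta}=0\}$ shows that every degree-zero divisor supported on the cusps is the divisor of a weight-zero fractional eta-quotient. I would apply this to $\Div(f/g_{0})$, where $g_{0}(z)=\eta(z)^{2k}$ has weight $k$ and is non-vanishing on $\H$, so that $\Div(f/g_{0})$ is a degree-zero cusp divisor: this produces a weight $k$ fractional eta-quotient $g(z)=g_{0}(z)\prod_{\delta\mid N}\eta(\delta z)^{r_{1,\delta}}$ with $\Div(g)=\Div(f)$. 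Then $f/g$ is a modular function on $\Gamma_{0}(N)$ (up to a finite-order multiplier) with trivial divisor, hence constant; so $f=c\,g$, and expanding $g$ by the displayed identity and matching coefficients with $f=c\,q^{h}\prod_{n\geq1}(1-q^{n})^{c(n)}$ yields $c(n)=\sum_{\delta\mid\gcd(n,N)}r_{\delta}$, which depends only on $\gcd(n,N)$.

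The step I expect to be the main obstacle is the surjectivity of $D$: it rests entirely on the coincidence $\#\{\text{cusps of }X_{0}(N)\}=d(N)$ for square-free $N$, which is exactly where square-freeness is indispensable, and it is obtained cheaply only because injectivity comes for free from the product identity. Two technical points in the argument require care: realizing a prescribed cusp divisor may force an eta-quotient with rational exponents (handled by passing to an integral power of $f$, equivalently of $g$), and such fractional eta-quotients carry a multiplier system of finite order that must be cleared by a further power before the compactness argument applies. By contrast the ``$\Leftarrow$'' direction and the $q$-expansion bookkeeping are routine.
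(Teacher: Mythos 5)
This statement is quoted in the paper as Theorem 2 of Kohnen's article and the authors give no proof of it, so there is no in-paper argument to compare against; your proposal must therefore be judged on its own, and it is essentially correct. The backward direction is fine (and, as you note, needs neither square-freeness nor the eta identity, only that bounded exponents make the product converge to a non-vanishing function on all of $\H$). For the forward direction, your linear map $D$ from exponent vectors to cuspidal divisors is injective for the reason you give (a zero divisor forces weight zero by the valence formula, a power chosen to clear denominators and the finite-order multiplier is then a nowhere-vanishing function on the compact curve $X_0(N)$, hence constant, and the product identity plus M\"obius inversion kills the exponent vector), and surjectivity does indeed come from the coincidence $\#\{\text{cusps}\}=\sum_{d\mid N}\phi(\gcd(d,N/d))=d(N)$ for square-free $N$; this is exactly where the hypothesis enters, and your dimension-count shortcut is legitimate. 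Your route differs from Kohnen's published one, which works with the logarithmic derivative $q\,\tfrac{d}{dq}\log f$ --- a form of weight two that is holomorphic on $\H$ precisely when $f$ has no zeros or poles there --- together with the explicit Eisenstein basis of that space for square-free level; the divisor-theoretic route you take is closer in spirit to this paper's own machinery (the forms $E_{d,N}$ of Lemma~\ref{magicetas} and the divisor-matching in Lemma~\ref{step 2} in effect establish a weaker surjectivity statement valid for all $N$). Two small points you should make explicit: first, that $\deg D(r)=0$ if and only if $\sum_\delta r_\delta=0$, which is what lets you restrict $D$ to the hyperplane and land isomorphically on the degree-zero cuspidal divisors; second, that the exponents in a representation $c\,q^h\prod_{n\geq 1}(1-q^n)^{c(n)}$ are unique (take the formal logarithm and M\"obius-invert $\sum_{d\mid n}d\,c(d)$), since the $c(n)$ are a priori arbitrary complex numbers and the final ``matching coefficients'' step silently relies on this.
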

A direct consequence of his proof is that for some $f(z) \in M_k^!(\Gamma_0(N))$ that has no zeros or poles on $\H$ and where $N$-square free, there exists some positive integer $t$ such that $f(z)^t$ is a constant times an eta-quotient.  If we replace the condition that $N$ is square-free with $f(z)\in \Z[[q]]$, we prove the following result.

\begin{thm}  \label{etaquotientthm} Suppose $f(z) \in M_k(\Gamma_0(N))\cap \Z[[q]]$ has the property that $f$ is non-zero on $\H$.  Then $f(z)=c g(z)$ where $c\in \Z$ and $g(z)$ is an eta-quotient.
\end{thm}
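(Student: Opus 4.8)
The plan is to reduce the theorem to a statement about the divisor of $f$ on the modular curve $X_0(N)$ and then build an eta-quotient with the same divisor. Since $f$ is holomorphic and non-vanishing on $\H$, after possibly dividing $f$ by the leading integer content we may assume $f(z) = q^{h}(1 + a_1 q + a_2 q^2 + \cdots)$ with $a_i \in \Z$. The zeros of $f$ (as a section of the appropriate line bundle on $X_0(N)$) are then concentrated at the cusps, and the weight $k$ together with the order of vanishing at each cusp $c/d$ must satisfy the usual valence-formula constraint $\sum_{\text{cusps}} \operatorname{ord}(f) + (\text{elliptic contributions}) = \frac{k}{12}[\SL_2(\Z):\Gamma_0(N)]$. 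The key point is that the order of vanishing of $f$ at a cusp $c/d$ depends only on $d$ (this is automatic for eta-quotients by Ligozat's formula, but for a general such $f$ one must prove it): I would show this by using the fact that $f$ has rational—indeed integral—$q$-expansion at infinity, hence its expansions at the various cusps $c/d$ with the same $d$ are Galois-conjugate over $\Q(\zeta_N)$, so they share the same order of vanishing.

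Next I would attempt to solve the linear system: find rational exponents $r_\delta$, $\delta \mid N$, so that the eta-quotient $g(z) = \prod_{\delta \mid N}\eta(\delta z)^{r_\delta}$ has weight $k$ (so $\sum r_\delta = 2k$) and, for every $d \mid N$, order of vanishing at $c/d$ equal to that of $f$. By Ligozat's formula this is a linear system in the $r_\delta$ with $d(N)$ unknowns and $d(N)$ equations (one for each divisor $d$, plus the weight equation, minus the dependency coming from the valence formula, which holds for both $f$ and any candidate $g$). I would argue this system is non-singular—the relevant matrix is, up to scaling, the matrix with entries $\gcd(d,\delta)^2/\delta$, which is known to be invertible—so there is a \emph{unique} rational solution $r_\delta$. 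Then $f(z)/g(z)$ is a weight-zero modular function for $\Gamma_0(N)$ (a priori on a congruence cover, because the $r_\delta$ and the multiplier system need attention) with no zeros or poles anywhere on $X_0(N)$, hence is a nonzero constant $c$. It remains to show $r_\delta \in \Z$ and $c \in \Z$: integrality of the $r_\delta$ would follow from Newman's congruence conditions \eqref{congruenceconditions} being forced by the requirement that $g$ transform correctly, combined with the constraint that $f/g$ be a genuine function (trivial multiplier); and $c \in \Z$ follows by comparing leading coefficients, since $f$ has integral $q$-expansion with leading coefficient $1$ and $g$ has leading coefficient a root of unity times a rational, forcing $c \in \overline{\Z} \cap \Q = \Z$ after clearing the content.

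The main obstacle I anticipate is controlling the multiplier system and the integrality of the exponents simultaneously. A priori the linear system over $\Q$ produces exponents $r_\delta$ that need not be integers and an eta-quotient that need not even be a modular form for $\Gamma_0(N)$ (it may only be modular on a finite-index subgroup or carry a nontrivial character). The resolution is to observe that $f/g$ being a meromorphic modular function with \emph{no zeros or poles on the compact Riemann surface} $X_0(N)$ (or a cover) forces it to be constant regardless, and then to run the argument the other way: the fact that $f \in \Z[[q]]$ has trivial character and integral expansion pins down the denominators. Concretely, I would pass to a large enough power $f^t$ so that Kohnen's-style analysis applies, deduce $f^t = c_0\,g_0$ for an honest eta-quotient $g_0$ with integer exponents, and then take $t$-th roots: the $q$-expansion shows $f$ itself must be $c\cdot g_0^{1/t}$, and $g_0^{1/t}$ is again an eta-quotient (with exponents $r_\delta/t$) which, being a holomorphic modular form on $\Gamma_0(N)$ with integral $q$-expansion, must by the same linear-algebra/valence argument have integral exponents. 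Checking that this root extraction is legitimate—i.e., that no root of unity ambiguity survives and that $c \in \Z$ rather than merely $c \in \overline{\Q}$—is the technical heart of the proof, and I expect it to rely on the explicit description of the eta multiplier system together with the hypothesis $f \in \Z[[q]]$.
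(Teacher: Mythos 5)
Your outline through the construction of the candidate eta-product agrees in substance with the paper's: the Galois-conjugacy argument showing that $\ord_{\ell/d}(f)$ depends only on $d$ is exactly Lemma~\ref{step 1} (proved via Stevens' description of the Galois action on the cusps), and matching the cuspidal divisor of a suitable power of $f$ by a product of eta-functions with rational exponents is Lemma~\ref{step 2}, which does this with the forms $E_{d,N}$ of Lemma~\ref{magicetas} rather than by inverting the Ligozat matrix (your invertibility claim is fine, but unnecessary). The genuine gaps are in the final integrality step, which is where the hypothesis $f \in \Z[[q]]$ actually has to do work. Your opening normalization, ``after possibly dividing $f$ by the leading integer content we may assume $f = q^{h}(1 + a_1 q + \cdots)$ with $a_i \in \Z$,'' assumes precisely the nontrivial fact that the leading coefficient $a(n_0)$ divides every coefficient of $f$: dividing by the content of $f$ need not make the leading coefficient $1$, and dividing by $a(n_0)$ need not preserve integrality. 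The paper proves this divisibility separately (Corollary~\ref{step 3}) by writing $f = pS(q) + R(q)$ with the coefficients of $R$ in $[0,p-1]$ for a prime $p \mid a(n_0)$, expanding $\tfrac{1}{p}f^c = \sum_t p^{t-1}\binom{c}{t}S^tR^{c-t} = \tfrac{a(n_0)^c}{p}F$, and deriving a contradiction from the leading coefficient of $\tfrac{1}{p}R^c$ unless $R=0$. Without this, your later claim that $g_0^{1/t} = f/c$ has integral $q$-expansion is unsupported.

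Second, your assertion that the exponents $r_\delta/t$ ``must by the same linear-algebra/valence argument have integral exponents'' is not correct: the valence and linear-algebra considerations determine the exponents uniquely as rational numbers and say nothing about their integrality. What forces integrality is the integrality of the $q$-expansion, via a separate induction on the divisors $d_0 < d_1 < \cdots$ of $N$ (Lemma~\ref{step 4}): the coefficient of $q^{r+d_\ell}$ in $q^r\prod_{d\mid N}\prod_{n\geq 1}(1-q^{dn})^{s_d}$ equals an integer determined by $s_{d_0},\dots,s_{d_{\ell-1}}$ minus $s_{d_\ell}$, so each $s_{d_\ell}$ is an integer in turn. These two $q$-expansion arguments are the technical heart of the theorem; your worries about the multiplier system, by contrast, dissolve once everything is phrased as rational powers of honest eta-quotients, and the resolution you anticipate (``the explicit description of the eta multiplier system'') is not where the difficulty lies. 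As written, your proposal supplies neither of the two essential steps.
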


Let $\kappa$ be an even integer and $g(z) \in M_\kappa^!(\Gamma_0(N))\cap \Z((q))$ be non-zero on $\H$.  Since $\Delta(z)=\eta(z)^{24}$ is zero at every cusp of $\Gamma_0(N)$, for sufficiently large $t$, $g(z)\Delta(z)^t$ is a holomorphic modular form on $\Gamma_0(N)$ with integral Fourier coefficients and non-zero on the upper-half plane.  By Theorem \ref{etaquotientthm}, this implies $g(z)\Delta(z)^t$ is a constant times an eta-quotient which then implies the following.

\begin{cor} \label{weaklyholoetaquotient}
Suppose $g(z) \in M_\kappa^!(\Gamma_0(N))\cap \Z((q))$ has the property that $g$ is non-zero on $\H$.  Then $g(z)=c h(z)$ where $c\in \Z$ and $h(z)$ is an eta-quotient.
\end{cor}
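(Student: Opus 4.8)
The plan is to deduce Corollary~\ref{weaklyholoetaquotient} directly from Theorem~\ref{etaquotientthm} by multiplying $g(z)$ by a sufficiently high power of $\Delta(z) = \eta(z)^{24}$. Recall that $\Delta \in M_{12}(\Gamma_{0}(1)) \subseteq M_{12}(\Gamma_{0}(N))$, and by Ligozat's formula (applied with the single nonzero exponent $r_{1} = 24$) it vanishes to the positive order $N/(\gcd(d,N/d)\,d)$ at every cusp $c/d$ of $X_{0}(N)$. Since $g(z)$ is weakly holomorphic it has a pole of finite order at each of the finitely many cusps of $\Gamma_{0}(N)$, so there is an integer $t \geq 0$ large enough that $g(z)\Delta(z)^{t}$ is holomorphic at every cusp; for such $t$ we have $g(z)\Delta(z)^{t} \in M_{\kappa + 12t}(\Gamma_{0}(N))$.

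Next I would verify that $F(z) := g(z)\Delta(z)^{t}$ meets the remaining hypotheses of Theorem~\ref{etaquotientthm}. It is non-zero on $\H$ because both $g(z)$ and $\Delta(z)$ are, the latter by the product definition \eqref{eta-def}. It has integer Fourier coefficients: writing $g(z) = \sum_{n \geq -m} a_{n} q^{n}$ with $a_{n} \in \Z$ and $\Delta(z)^{t} = q^{t}\prod_{n \geq 1}(1-q^{n})^{24t} \in q^{t}\Z[[q]]$ with constant term $1$, the product lies in $\Z((q))$, and after enlarging $t$ if necessary so that $t \geq m$ we obtain $F(z) \in \Z[[q]]$. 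Thus $F(z) \in M_{\kappa+12t}(\Gamma_{0}(N)) \cap \Z[[q]]$ is non-vanishing on $\H$.

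Theorem~\ref{etaquotientthm} then gives $F(z) = c\,H(z)$ with $c \in \Z$ and $H(z)$ an eta-quotient. Dividing by $\Delta(z)^{t} = \eta(z)^{24t}$ yields
\[
  g(z) = c\,\frac{H(z)}{\eta(z)^{24t}},
\]
and since a ratio of eta-quotients is again an eta-quotient, $h(z) := H(z)/\eta(z)^{24t}$ is an eta-quotient, which completes the proof. The only substantive point is the choice of $t$: one must know that $\Delta$ vanishes at \emph{every} cusp of $\Gamma_{0}(N)$, so that a large enough power genuinely clears all of the poles of $g(z)$ and produces a holomorphic form — this is exactly what Ligozat's formula provides — while the remaining steps are routine checks of the hypotheses of Theorem~\ref{etaquotientthm}.
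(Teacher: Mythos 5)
Your proposal is correct and is essentially the paper's own argument: the authors likewise multiply $g(z)$ by a large power of $\Delta(z)=\eta(z)^{24}$ (which vanishes at every cusp) to land in $M_{\kappa+12t}(\Gamma_0(N))\cap\Z[[q]]$, apply Theorem~\ref{etaquotientthm}, and divide back by $\eta(z)^{24t}$. Your extra verifications of the hypotheses are fine but do not change the route.
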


On the other hand, the condition $f(z) \in \Z[[q]]$ in Theorem
\ref{etaquotientthm} is necessary.  To see this, suppose that $p$ is
an odd prime with $p^2 \mid N$.  Then by the a theorem of Manin and
Drinfeld \cite{drinfeld}, \cite{manin}, the cuspidal subgroup of the
Jacobian of $X_0(N)$ is finite which implies that there exists a $g(z)
\in M_0^! (\Gamma_0(N))$ whose zeros are only supported at the cusp
$1/p$ and whose poles are only supported at the cusp $\infty$.  It
follows that for a sufficiently large $t$, we have $f(z) = g(z)
\Delta(z)^t \in M_{12t}(\Gamma_0(N))$.  Such a form is non-zero on
$\H$.  Since $\Delta(z)=\eta(z)^{24}$, the remark following the
theorem of Ligozat implies that $\ord_{\frac1{p}} \Delta(z) =
\ord_{\frac{2}{p}} \Delta(z)$.  We also have $\ord_{\frac1{p}} g(z)
>0$ and $\ord_{\frac2{p}} g(z) = 0$.  It follows that $
\ord_{\frac1{p}}(f(z)) > \ord_{\frac2{p}}(f(z))$.  Using the remark
after the theorem of Ligozat again, we conclude that $f(z)$ is not an
eta-quotient.  By Corollary \ref{weaklyholoetaquotient}, such a
modular function $g(z)$ must have non-integral Fourier coefficients. Another way
of seeing this it to note that the cusps $\frac{1}{p}$ and $\frac{2}{p}$
on $X_{0}(N)$ are not defined over $\Q$ and are in the same orbit
under $\Gal(\overline{\Q}/\Q)$. Therefore, a modular form with
a zero at one and no zero at the other cannot be fixed by $\Gal(\overline{\Q}/\Q)$ and this implies it cannot have rational Fourier coefficients.

The multiplicative group of $M_0^!(\Gamma_0(N))$ is often called the
modular units on $\Gamma_0(N)$.  If $f(z)$ is a modular unit, then by definition $f(z)^{-1} \in M_0^{!}(\Gamma_0(N))$ as well, which implies that $f$ must be non-zero on $\H$.  Consider $\mathcal{M}(N) =
M_0^!(\Gamma_0(N))^{\times}\cap \Z((q))^{\times}$.  This is the
subgroup of the modular units on $\Gamma_0(N)$ with integral
coefficients and a leading coefficient of $\pm 1$ .  A special case of Corollary \ref{weaklyholoetaquotient} is the following.

\begin{cor}\label{modularunitscor}
Suppose $g(z) \in \mathcal{M}(N)$.  Then $g(z) = \pm f(z)$ where $f(z)$ is an eta-quotient.
\end{cor}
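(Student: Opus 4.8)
The plan is to obtain this as the $\kappa = 0$ case of Corollary~\ref{weaklyholoetaquotient}. First I would unwind the definition of $\mathcal{M}(N)$: if $g(z) \in \mathcal{M}(N)$, then $g(z)$ and $g(z)^{-1}$ both lie in $M_0^!(\Gamma_0(N))$. In particular $g(z)$ is holomorphic on $\H$, and because $g(z)^{-1}$ is also holomorphic on $\H$ the function $g(z)$ has no zero on $\H$. By the definition of $\mathcal{M}(N)$ we also have $g(z) \in \Z((q))$. Thus $g(z)$ meets every hypothesis of Corollary~\ref{weaklyholoetaquotient} with $\kappa = 0$, and that corollary gives $g(z) = c\,h(z)$ for some $c \in \Z$ and some eta-quotient $h(z)$.

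It then remains to pin down $c$. Writing $h(z) = \prod_{\delta \mid N} \eta(\delta z)^{r_\delta}$ and using the product formula~\eqref{eta-def}, each factor expands as
\[
\eta(\delta z)^{r_\delta} = q^{\delta r_\delta/24}\prod_{n\geq 1}(1-q^{\delta n})^{r_\delta},
\]
and since each infinite product has constant term $1$, the eta-quotient $h(z)$ has leading Fourier coefficient $1$, its leading exponent being $\tfrac{1}{24}\sum_{\delta\mid N}\delta r_\delta$. On the other hand, membership of $g(z)$ in $\Z((q))^{\times}$ forces the leading Fourier coefficient of $g(z)$ to be a unit of $\Z$, hence $\pm 1$. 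Comparing leading coefficients in the identity $g(z) = c\,h(z)$ yields $c = \pm 1$, so $g(z) = \pm h(z)$ with $h(z)$ an eta-quotient, as claimed.

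There is no real obstacle here: the entire content is already packaged in Corollary~\ref{weaklyholoetaquotient}, and ultimately in Theorem~\ref{etaquotientthm}. The only step requiring a line of argument is the normalization of the leading Fourier coefficient of an eta-quotient, which is immediate from~\eqref{eta-def}. If desired, one could also note in passing that $\mathcal{M}(N)$ is genuinely a group, i.e.\ closed under inversion, but this plays no role in the proof of the statement itself.
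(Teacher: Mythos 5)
Your proof is correct and matches the paper's approach: the paper presents this corollary precisely as the $\kappa=0$ special case of Corollary~\ref{weaklyholoetaquotient}, with the non-vanishing on $\H$ coming from invertibility in $M_0^!(\Gamma_0(N))$ and the normalization $c=\pm 1$ coming from comparing the leading coefficient of $g$ (a unit of $\Z$, by membership in $\Z((q))^{\times}$) with the leading coefficient $1$ of an eta-quotient. No gaps.
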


Our next result concerns the structure of the rational cuspidal subgroup $J_0(N)(\Q)_{\text{cusp}}$ when $N$ is a power of 2.  Let $\DIV^0 (X_0(N)/\Q)$ denote the degree 0 divisors of the modular curve $X_0(N)$ fixed by every element of $\Gal(\overline{\Q}/\Q)$.  Then $J_0(N)(\Q)_{\text{cusp}}$ is isomorphic to $\DIV^0 (X_0(N)/\Q) / \Div(\mathcal{M}(N))$.  Ling \cite{Ling} computed the structure of $J_0(p^k)(\Q)_{\text{cusp}}$ where $p\geq 3$ is prime and $k\geq 1$.  Using Corollary~\ref{modularunitscor}, we compute $J_0(2^k)(\Q)_{\text{cusp}}$.  Let $k\geq 1$ and define the set $I_k$ as follows.  For $1\leq k \leq 6$, define
\[ I_k = \begin{cases} \emptyset & \text{ if } 1\leq k \leq 4 \, ,\\
\{2\} &  \text{ if } k=5\, ,\\
\{1,2,2\} & \text{ if } k=6  \, .\end{cases}
\]

For $k\geq 7$, if $k$ is odd, define
\[i_{t,k} = \begin{cases} \lfloor \tfrac{t-1}{2} \rfloor + \tfrac{k-1}2 - 2 & \text{ if } 1 \leq t \leq k-1 \, , \\
k-3  & \text{ if } t=k-2\, , \end{cases} \]
and if $k$ is even, define
\[ i_{t,k} = \begin{cases} \lfloor \tfrac{t}2 \rfloor + \tfrac{k}2 - 3 & \text{ if } 1\leq t \leq k-4 \, ,\\
 k-4 & \text{ if } t = k-3 \text{ or } k-2 \, . \end{cases} \]
Then let
\[ I_k = \{ i_{t,k}\}_{t=1}^{k-2} \, .\]

\begin{thm}\label{level-2^k-rational-cusp-sbgrp-structure}
For $k \geq 1$,
\[ J_0(2^k)(\Q)_{\text{cusp}} \cong \bigoplus_{i\in I_k} \Z/2^i\Z \, .\]
\end{thm}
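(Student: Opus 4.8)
The plan is to combine Corollary~\ref{modularunitscor} with the identification $J_0(2^k)(\Q)_{\mathrm{cusp}} \cong \DIV^0(X_0(2^k)/\Q)/\Div(\mathcal{M}(2^k))$ recorded above, so that the whole problem becomes an explicit computation with two lattices of rank $k$. Indeed, by Corollary~\ref{modularunitscor}, $\Div(\mathcal{M}(2^k))$ is exactly the subgroup generated by the divisors of the eta-quotients $f_{\vec{r}}(z) = \prod_{j=0}^{k} \eta(2^j z)^{r_j}$ that are weight-zero modular functions on $\Gamma_0(2^k)$; identifying those, their divisors, and the ambient group $\DIV^0(X_0(2^k)/\Q)$ is then a matter of linear algebra over $\Z$.

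First I would write down the two lattices. For $0 \le a \le k$, the cusps of $X_0(2^k)$ of denominator $2^a$ number $h_a = \phi(2^{\min(a,k-a)})$, and by the description of the action of $\Gal(\overline{\Q}/\Q)$ on the cusps of $X_0(N)$ — the same input as in the discussion preceding this theorem, where $1/p$ and $2/p$ are seen to be Galois conjugate — they form a single Galois orbit, defined over $\Q(\zeta_{2^{\min(a,k-a)}})$. Letting $P_a$ be the $\Q$-rational divisor equal to the sum of these $h_a$ cusps, we get $\DIV(X_0(2^k)/\Q) = \bigoplus_{a=0}^{k}\Z P_a$ and $\DIV^0(X_0(2^k)/\Q) = \{\sum_a m_a P_a : \sum_a h_a m_a = 0\}$, which is free of rank $k$ (since $h_0 = h_k = 1$). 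On the other side, the criterion of Newman \eqref{congruenceconditions} shows that $f_{\vec{r}}$ is a weight-zero modular function on $\Gamma_0(2^k)$ precisely when $\vec{r}$ lies in the lattice
\[
  \mathcal{L}_k = \left\{ \vec{r} \in \Z^{k+1} : \textstyle\sum_j r_j = 0,\ \sum_j 2^j r_j \equiv \sum_j 2^{k-j} r_j \equiv 0 \pmod{24},\ \sum_j j r_j \equiv 0 \pmod 2 \right\},
\]
a finite-index subgroup of the rank-$k$ lattice $\{\vec{r} : \sum_j r_j = 0\}$. By Ligozat's theorem and the remark following it, $\Div(f_{\vec{r}}) = \sum_{a=0}^{k} \nu_a(\vec{r})P_a$ with
\[
  \nu_a(\vec{r}) = \frac{2^k}{24} \sum_{j=0}^{k} 2^{\,2\min(a,j) - \min(a,k-a) - a - j}\, r_j,
\]
and since $f_{\vec{r}}$ is non-vanishing on $\H$, the vector $(\nu_a(\vec{r}))_a$ is zero only when $f_{\vec{r}}$ is constant, which forces $\vec{r} = 0$. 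Hence $\Phi \colon \vec{r} \mapsto (\nu_a(\vec{r}))_a$ embeds $\mathcal{L}_k$ as a full-rank sublattice of $\DIV^0(X_0(2^k)/\Q) \cong \Z^k$, and $J_0(2^k)(\Q)_{\mathrm{cusp}} \cong \Z^k/\Phi(\mathcal{L}_k)$, which is finite by Manin--Drinfeld.

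The core of the proof is to determine the Smith normal form of $\Phi$ uniformly in $k$. I would choose a $\Z$-basis of $\mathcal{L}_k$ adapted to $\Phi$: starting from the consecutive ratios $\eta(2^{j-1}z)/\eta(2^{j}z)$, one raises them to suitable powers and forms a small number of products so that the two congruences modulo $24$ and the congruence modulo $2$ hold while the basis stays as localized as possible. Because the exponent $2\min(a,j) - \min(a,k-a) - a - j$ is monotone in $a$ and in $j$ on each side of the diagonal and of the ``fold'' at $a = k/2$, the matrix of $\Phi$ in such a basis is banded with entries that are small integer combinations of powers of $2$. I would also exploit the Fricke involution $w_{2^k}$, which sends $f_{\vec{r}}$ to a constant times $f_{\vec{r}'}$ with $r'_j = r_{k-j}$ and carries $P_a$ to $P_{k-a}$, hence conjugates $\Phi$ by the reversal permutation; this $a \leftrightarrow k-a$ symmetry is precisely what produces both the floor functions and the parity distinction between odd and even $k$ in the formulas for $i_{t,k}$. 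Finally, since every divisor of $2^k$ is a power of $2$, the $\pmod 3$ parts of Newman's congruences contribute only a bounded $3$-primary factor, which one checks is absorbed into trivial elementary divisors, so that $J_0(2^k)(\Q)_{\mathrm{cusp}}$ comes out a $2$-group as the statement requires.

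The step I expect to be the main obstacle is the exact bookkeeping of the elementary divisors $2^{i_{t,k}}$ — that is, computing the greatest common divisor of the $t \times t$ minors of $\Phi$, or, equivalently, running an induction that passes from level $2^k$ to level $2^{k+2}$ for $k$ large (peeling off two new cusp-classes near the ends and near the fold and doubling the previously obtained invariant factors, a pattern reflected in $|I_{k+2}| = |I_k| + 2$ and $i_{t,k+2} = i_{t,k} + 1$ for the generic indices $t$). It is at this point that the small cases $1 \le k \le 6$, the values $\lfloor (t \pm 1)/2 \rfloor$, and the exceptional values at $t = k-2$ (and $t = k-3$ when $k$ is even) have to be teased out of the interplay between the dyadic valuations appearing in $\nu_a$ and the congruences cutting out $\mathcal{L}_k$ inside $\{\vec{r}:\sum_j r_j=0\}$. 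As a check, I would verify the resulting formula against the known rational torsion of $J_0(2^k)$ for $k \le 6$, where $X_0(2^k)$ has genus at most $3$; for instance $J_0(32) = X_0(32)$ is an elliptic curve of conductor $32$ with $J_0(32)(\Q)_{\mathrm{tors}} \cong \Z/4\Z$, in agreement with $I_5 = \{2\}$.
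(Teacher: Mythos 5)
Your reduction is sound and is essentially the same first move the paper makes: by Corollary~\ref{modularunitscor} every element of $\mathcal{M}(2^k)$ is $\pm$ a level-$2^k$ eta-quotient, the Galois-fixed cuspidal divisors are exactly the $\Z$-combinations of the orbit sums $P_a$ (one orbit per denominator $2^a$, as in the proof of Lemma~\ref{step 1}), and Ligozat's formula turns $J_0(2^k)(\Q)_{\text{cusp}}$ into the cokernel of an explicit map $\Phi$ between two rank-$k$ lattices. Your formula for $\nu_a$, the count $h_a=\phi(2^{\min(a,k-a)})$, and the injectivity of $\Phi$ are all correct. But the theorem \emph{is} the list of elementary divisors $2^{i_{t,k}}$, and that is precisely the step you defer. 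Everything you write after ``the core of the proof is to determine the Smith normal form of $\Phi$'' is a strategy, not an argument: the claim that an adapted basis makes the matrix ``banded,'' the claim that the $3$-primary contribution of the mod-$24$ congruences is absorbed into trivial elementary divisors, the Fricke-symmetry heuristic for the floors and the parity split, and above all the proposed induction from $2^k$ to $2^{k+2}$ are all unverified, and the last of these is a nontrivial structural assertion in its own right (you would have to relate $\mathcal{L}_{k+2}$ and $\Phi_{k+2}$ to $\mathcal{L}_k$ and $\Phi_k$ compatibly with Smith normal form, which is not automatic). The exceptional values $i_{k-2,k}=k-3$ (and $i_{k-3,k}=i_{k-2,k}=k-4$ for even $k$), which are exactly the places where a uniform pattern would break, are never derived.

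For comparison, the paper carries out this computation by hand in a specific basis: Lemma~\ref{level-2^k-basis-lemma} exhibits a minimal generating set $f_{0,k},\dots,f_{k-1,k}$ of $\mathcal{M}(2^k)$ (using the congruences~\eqref{congruenceconditions} to control the $2$-adic valuations of the exponents), then writes down explicit rational divisors $d_{\ell,k}$ with $2^{i_{t,k}}d_{\ell,k}$ equal to $\Div(f_{\ell,k})$ or to a short combination such as $2\Div(f_{\ell,k})-\Div(f_{\ell+1,k})$, and must then prove two separate things: that the $d_{\ell,k}$ generate (which requires producing a special divisor $s$ supported at the middle cusp of denominator $2^{k/2}$ as a combination of the $d_{\ell,k}$ for $4\le\ell<k/2$ — a point your sketch does not anticipate), and that the cyclic subgroups $\langle d_{\ell,k}\rangle$ intersect trivially, which is done by a careful $2$-adic valuation argument on the eta-exponents $r_t$ of a putative relation, invoking Lemma~\ref{step 4} to force integrality and derive a contradiction. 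None of this bookkeeping is routine, and until it is done your proposal establishes only that $J_0(2^k)(\Q)_{\text{cusp}}$ is the cokernel of some explicit integer matrix, not that it has the stated invariant factors.
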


We now give a formula for the number of weight $k$ eta-quotients
of level $N$ assuming that $X_{0}(N)$ has genus zero (and hence
$J_{0}(N)(\Q)_{\text{cusp}}$ is trivial).

\begin{thm}
\label{genuszero}
Assume that $N$ is a positive integer so that $X_{0}(N)$ has genus zero,
and there is a holomorphic eta-quotient in $M_{k}(\Gamma_{0}(N))$. Then, the
number of eta-quotients in $M_{k}(\Gamma_{0}(N))$ is equal to the number of
tuples of non-negative integers $(c_{d} : d | N)$ with
\[
  \sum_{d | N} c_{d} \phi\left(\gcd\left(d,\frac{N}{d}\right)\right) = \frac{k}{12}
  \cdot [\SL_{2}(\Z) : \Gamma_{0}(N)].
\]
Here $\phi(n)$ is the usual Euler totient function.
\end{thm}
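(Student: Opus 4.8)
The plan is to write down an explicit bijection between the eta-quotients in $M_k(\Gamma_0(N))$ and the tuples in the statement. Write $\phi_d = \phi(\gcd(d,N/d))$ for $d\mid N$; this is the number of cusps of $X_0(N)$ of denominator $d$, and $\phi_N = 1$ since $\infty$ is the only cusp of denominator $N$. Put $D = \frac{k}{12}[\SL_2(\Z):\Gamma_0(N)]$, which is a non-negative integer since $M_k(\Gamma_0(N))$ contains a holomorphic eta-quotient by hypothesis. To an eta-quotient $f = \prod_{\delta\mid N}\eta(\delta z)^{r_\delta}\in M_k(\Gamma_0(N))$ I attach $\Phi(f) = (c_d)_{d\mid N}$, where $c_d$ is the order of vanishing of $f$ at any cusp of denominator $d$. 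By the remark following Ligozat's theorem this is well defined (independent of the chosen cusp) and is computed by Ligozat's formula. Each $c_d$ is a non-negative integer, being the order, in the local parameter at a regular point of $X_0(N)$ (the cusps are regular), of a form that is holomorphic there. Finally, because an eta-quotient is nowhere zero on $\H$ — in particular at the elliptic points — the valence formula reads $\sum_{\text{cusps }s}\ord_s(f) = D$, that is, $\sum_{d\mid N} c_d\phi_d = D$. So $\Phi$ lands in the prescribed set, and the theorem amounts to showing it is a bijection.

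\emph{Injectivity.} If $\Phi(f_1) = \Phi(f_2)$ for eta-quotients $f_1,f_2\in M_k(\Gamma_0(N))$, then (using that each is nowhere zero on $\H$) $f_1$ and $f_2$ have the same divisor on the compact curve $X_0(N)$, so the weight-zero form $f_1/f_2$ has trivial divisor and is therefore a nonzero constant. But $f_1/f_2 = \prod_{\delta\mid N}\eta(\delta z)^{r_\delta^{(1)}-r_\delta^{(2)}}$, and reading off the lowest-order terms of the $q$-expansion shows that a constant eta-quotient has all exponents zero (equivalently, the functions $\eta(\delta z)$ for $\delta\mid N$ are multiplicatively independent). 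Hence $f_1 = f_2$.

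\emph{Surjectivity} — this is the heart of the argument, and where the genus-zero hypothesis enters. Fix non-negative integers $(c_d)_{d\mid N}$ with $\sum_d c_d\phi_d = D$, and let $f_0 = \prod_{\delta\mid N}\eta(\delta z)^{r_\delta^{(0)}}\in M_k(\Gamma_0(N))$ be the eta-quotient guaranteed by the hypothesis, with $\Phi(f_0) = (c_d^{(0)})$. The divisor $E = \sum_{d\mid N}(c_d - c_d^{(0)})\,C_d$, where $C_d$ is the sum of the cusps of denominator $d$, has degree $D - D = 0$ and is fixed by $\Gal(\overline{\Q}/\Q)$, which permutes the cusps of each fixed denominator among themselves. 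Since $X_0(N)$ has genus zero, $J_0(N)$ is trivial, so $E$ is principal; as $E$ is defined over $\Q$ and $X_0(N)$ has a rational point (the cusp $\infty$), we may write $E = \Div(u)$ with $u$ having a $q$-expansion in $\Q((q))$, and since $\Div(u)$ is supported on the cusps, $u$ is a modular unit, hence after scaling by a rational constant lies in $\mathcal{M}(N)$. By Corollary~\ref{modularunitscor}, $u = \pm v$ with $v$ an eta-quotient of level $N$ and $\Div(v) = E$. Now set $f = f_0 v$: this is an eta-quotient of level $N$, it transforms like a weight $k + 0 = k$ form, and its divisor $\sum_d c_d^{(0)}C_d + E = \sum_d c_d C_d$ is effective, so $f$ is holomorphic at every cusp and on $\H$; thus $f\in M_k(\Gamma_0(N))$ and $\Phi(f) = (c_d)$. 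Therefore $\Phi$ is onto, hence a bijection, proving the theorem.

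The principal obstacle is the surjectivity step. Conceptually, for a general level $N$ a degree-zero divisor of the form $\sum_d b_d C_d$ need not be the divisor of any eta-quotient — the obstruction lives in the cuspidal divisor class group — and it is precisely the triviality of $J_0(N)$ in the genus-zero case that removes this obstruction, with Corollary~\ref{modularunitscor} then upgrading ``modular unit'' to ``eta-quotient.'' A secondary point to be verified carefully is the normalization of $u$: one must know that a modular function on $\Gamma_0(N)$ defined over $\Q$ whose divisor is supported on the cusps has, after scaling, an integral $q$-expansion with leading coefficient $\pm 1$, i.e.\ lies in $\mathcal{M}(N)$; this is part of the standard theory of modular units, the same input already underlying the use of $\mathcal{M}(N)$ earlier in the paper.
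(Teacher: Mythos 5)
Your proposal is correct in outline and follows the same route as the paper: both set up the map $f\mapsto(\ord_{1/d}f)_{d\mid N}$, get the degree condition from the valence formula, and use the genus-zero hypothesis (triviality of the cuspidal class group) to realize an arbitrary Galois-stable cuspidal divisor of degree zero as the divisor of a $\Q$-rational modular function. The one place you diverge is the last step, and it is exactly the point you flag: you need to know that your function $u$, after scaling by a rational constant, lands in $\mathcal{M}(N)$ (integral $q$-expansion, leading coefficient $\pm 1$) before Corollary~\ref{modularunitscor} applies. That fact is true, but it is not ``already underlying the use of $\mathcal{M}(N)$ earlier in the paper'' --- the paper only ever applies its integrality results to functions already known to lie in $\Z((q))$, and never proves that a general $\Q$-rational modular unit can be so normalized. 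The paper sidesteps this by multiplying by the given holomorphic eta-quotient $h$ \emph{first}: then $f=hi$ is a holomorphic form in $M_k(\Gamma_0(N))\cap\Q[[q]]$, so it has bounded denominators, and clearing them with a constant $C$ puts one in the hypotheses of Corollary~\ref{step 3} (which forces $C=1$) and then Theorem~\ref{etaquotientthm} (which makes $f$ an eta-quotient outright). If you want to keep your order of operations, you can close your gap with the paper's own tools rather than an appeal to Kubert--Lang-style theory: $u\Delta^t$ is holomorphic for large $t$, has bounded denominators, and Corollary~\ref{step 3} together with Corollary~\ref{weaklyholoetaquotient} shows that $u$ times a suitable rational constant is an eta-quotient with leading coefficient $1$. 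Either way the argument goes through; the paper's ordering is just the more economical one given the lemmas it has already proved. (Your injectivity paragraph, via multiplicative independence of the $\eta(\delta z)$, is a point the paper leaves implicit, and is fine.)
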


\begin{rem} As a consequence, there are $\frac{k^{2}}{8} + \frac{3k}{4} + 1$
eta-quotients in $M_{k}(\Gamma_{0}(4))$, $\frac{k^{3}}{6} + k^{2} + \frac{11k}{6} + 1$ eta-quotients in $M_{k}(\Gamma_{0}(8))$, and $\frac{k^{4}}{3} + 2k^{3} + \frac{25k^{2]}}{6} + \frac{7k}{2} + 1$ eta-quotients in $M_{k}(\Gamma_{0}(16))$.
\end{rem}

{\bf Acknowledgments} 
We used Magma~\cite{Magma} version 2.19-8 and LattE~\cite{LattE}
version 1.5 for computations. Details about the computations that were done
are available at\\ {\tt http://users.wfu.edu/rouseja/eta/}. The authors
wish to thank the anonymous referees for helpful comments which have
improved the exposition.

\section{Background}

Given a prime number $p$ and a non-zero integer $n$, we define
$\ord_{p}(n)$ to be the highest power of $p$ that evenly divides $n$.

For a positive even integer $k$, let
\[
  E_{k}(z) = 1 - \frac{2k}{B_{k}} \sum_{n=1}^{\infty} \sigma_{k-1}(n) q^{n}
\]
be the usual weight $k$ Eisenstein series, where $\sigma_{k-1}(n)
= \sum_{d | n} d^{k-1}$ is the sum of the $k-1$st powers of the divisors of $n$. Here $B_{k}$ is the $k$th Bernoulli
number. If $k \geq 4$, then $E_{k}(z) \in M_{k}(\Gamma_{0}(1))$. Let
\[
  j(z) = \frac{E_{4}(z)^{3}}{\Delta(z)} = q^{-1} + 744q + 196884q + \cdots
\]
be the usual modular $j$-function. If $d$ is a positive integer,
define the operator $V(d)$ by
\[
  f(z) = \left(\sum_{n=0}^{\infty} a(n) q^{n}\right) | V(d)
  = f(dz) = \sum_{n=0}^{\infty} a(n) q^{dn}.
\]
It is well-known that if $f(z) \in M_{k}(\Gamma_{0}(N))$,
then $f(z) | V(d) \in M_{k}(\Gamma_{0}(dN))$. (See for example Proposition~2.22
of \cite{Ono}.)

A level $N$ modular function is a meromorphic modular form of weight zero
for $\Gamma_{0}(N)$. It is known that a level $N$ modular function is a rational
function in $j(z)$ and $j(Nz)$. (See Proposition 7.5.1 of Diamond and
Shurman's book \cite{DS}, page 279.)

An elliptic point of $\Gamma_{0}(N)$ is a number $z \in \mathbb{H}$ so
that there is an $M \in \Gamma_{0}(N)$ with $Mz = z$ and $M \ne \pm
I$. In this case $M$ has order $2$ or $3$ in ${\rm PSL}_{2}(\Z)$, and
we call $z$ an elliptic point of order $2$ or $3$. Let
$\epsilon_{2}(\Gamma_{0}(N))$ denote the number of $\Gamma_{0}(N)$
orbits of elliptic points of order $2$, and
$\epsilon_{3}(\Gamma_{0}(N))$ the number of orbits of elliptic points
of order $3$. Diamond and Shurman (\cite{DS}, Corollary 3.7.2, page
96) give the formulas
\begin{align*}
  \epsilon_{2}(\Gamma_{0}(N)) &= \begin{cases}
  \prod_{p | N} \left(1 + \legen{-1}{p}\right) & \text{ if } 4 \nmid N\\
  0 & \text{ if } 4 | N \end{cases}\\
  \epsilon_{3}(\Gamma_{0}(N)) &= \begin{cases}
  \prod_{p | N} \left(1 + \legen{-3}{p}\right) & \text{ if } p \nmid N\\
  0 & \text{ if } 9 | N. \end{cases}
\end{align*}

The action of $\Gamma_{0}(N)$ on the upper half-plane extends to an action
on $\mathbb{P}^{1}(\Q)$. The cusps of $\Gamma_{0}(N)$ are the $\Gamma_{0}(N)$-orbits
of $\mathbb{P}^{1}(\Q)$. Given a number $(a : c) \in \mathbb{P}^{1}(\Q)$ (with $a, c \in \Z$
and $\gcd(a,c) = 1$), there is a matrix $M = \left[ \begin{matrix} a & b \\ c & d \end{matrix} \right] \in \SL_{2}(\Z)$ with $M(\infty) = a/c$. If
$f \in M_{k}(\Gamma_{0}(N))$, the order of vanishing of $f$ at the cusp $a/c$ is
the smallest power of $q$ with a nonzero coefficient in the expansion of
\[
  (cz+d)^{-k} f\left(\frac{az+b}{cz+d}\right) =
  \sum_{n=0}^{\infty} a(n) q^{n/N}.
\]
This will be denoted by $\ord_{a/c}(f)$. It does not depend on the matrix $M$ chosen, and if $(a : c)$ and $(a' : c')$ are $\Gamma_{0}(N)$-equivalent, then
$\ord_{a/c}(f) = \ord_{a'/c'}(f)$.

The graded ring of modular forms for $\Gamma_{0}(N)$ is $\bigoplus_{\substack{k \geq 0 \\ k \text{ even}}} M_{k}(\Gamma_{0}(N))$. We require some results about
the weights in which this graded ring is generated. Let $\mathcal{L}$
be the sheaf of $1$-forms on $X_{0}(N)$. Since $X_{0}(N)$
is non-singular, $\mathcal{L}$ is invertible and
$M_{k}(\Gamma_{0}(N))$ is isomorphic to $H^{0}(X_{0}(N), \mathcal{L}^{\otimes k/2})$.
Multiplication of modular forms corresponds to maps
\[
  H^{0}(X_{0}(N), \mathcal{L}^{\otimes a}) \otimes H^{0}(X_{0}(N), \mathcal{L}^{\otimes b})
  \to H^{0}(X_{0}(N), \mathcal{L}^{\otimes a+b}).
\]
In \cite{Mumford}, page 55, Mumford shows that if $\mathcal{L}$
is an invertible sheaf on a curve of genus $g$ and with degree $\geq 2g + 1$,
then $L$ is ample with normal generation. This implies that the tensor
product map above is surjective, for all $a, b \geq 1$ and hence
the graded ring of modular forms is generated in weight $2$. This
result seems to have been rediscovered by Rustom \cite{NR} and Khuri-Makdisi 
(see Proposition 2.1 of \cite{KM}).

If $\Gamma_{0}(N)$ has no elliptic points, the degree of the invertible
sheaf $\mathcal{L}$ is $2g - 2 + \epsilon_{\infty}$ where $\epsilon_{\infty}$ is the
number of cusps of $\Gamma_{0}(N)$.  If $N$ is composite, then
$\epsilon_{\infty} \geq 3$. Hence, if $N$ is composite we have that
the multiplication map $M_{k}(\Gamma_{0}(N)) \times
M_{l}(\Gamma_{0}(N)) \to M_{k+l}(\Gamma_{0}(N))$ is surjective for all
$k$ and $l$ with $k, l \geq 2$. This shows that the graded ring of modular
forms is generated in weight $2$ as long as $N$ is composite and $\Gamma_{0}(N)$
has no elliptic points.

\section{Proofs}
First, we will analyze when there are eta-quotients of weight $k$
for $\Gamma_{0}(N)$. This is determined by the presence of elliptic points for $\Gamma_{0}(N)$. Recall that $\mathcal{E}_{k}(\Gamma_{0}(N))$ is the space
of weakly-holomorphic eta quotients of weight $k$ and level $N$.

\begin{lem}
\label{whenetas}
Fix a positive integer $N$. Then for even positive integers $k$,
$\mathcal{E}_{k}(\Gamma_{0}(N))$ is non-empty precisely when
\begin{enumerate}
\item $k \equiv 0 \pmod{12}$ if $\epsilon_{2}(\Gamma_{0}(N)) > 0$ and
$\epsilon_{3}(\Gamma_{0}(N)) > 0$,
\item $k \equiv 0 \pmod{6}$ if $\epsilon_{2}(\Gamma_{0}(N)) = 0$ and
$\epsilon_{3}(\Gamma_{0}(N)) > 0$,
\item $k \equiv 0 \pmod{4}$ if $\epsilon_{2}(\Gamma_{0}(N)) > 0$
and $\epsilon_{3}(\Gamma_{0}(N)) = 0$, and
\item $k \equiv 0 \pmod{2}$ if $\epsilon_{2}(\Gamma_{0}(N)) =
\epsilon_{3}(\Gamma_{0}(N)) = 0$.
\end{enumerate}
Moreover, if $\mathcal{E}_{k}(\Gamma_{0}(N))$ is non-empty, then
there is a holomorphic eta-quotient in $M_{k}(\Gamma_{0}(N))$.
\end{lem}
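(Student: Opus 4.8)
I would prove the necessity and the sufficiency of the congruence on $k$ separately, with the sufficiency argument also yielding the final ``moreover'' sentence.

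\emph{Necessity.} Every eta-quotient --- indeed every weakly holomorphic eta-quotient --- is non-vanishing on all of $\mathbb{H}$, in particular at the elliptic points of $\Gamma_0(N)$. On the other hand, a weight $k$ modular form on $\Gamma_0(N)$ with trivial character vanishes at every elliptic point of order $2$ unless $4 \mid k$, and at every elliptic point of order $3$ unless $6 \mid k$: the stabilizer of an order-$e$ elliptic point is cyclic, and invariance of the form under a generator forces the leading coefficient of its local Taylor expansion to vanish outside a residue class depending on $k \bmod 2e$ --- the same phenomenon responsible for the floor-function terms in the valence and dimension formulas (see \cite{DS}). Since $k$ is assumed even, this gives no further constraint in case (4), forces $4 \mid k$ in case (3), $6 \mid k$ in case (2), and $\lcm(4,6) = 12 \mid k$ in case (1), which are exactly the stated conditions.

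\emph{Sufficiency.} Write $w_0 \in \{12,6,4,2\}$ for the smallest admissible weight in cases (1)--(4) respectively, so that the stated condition on $k$ is precisely $w_0 \mid k$. It suffices to produce a single \emph{holomorphic} eta-quotient $g$ of level $N$ and weight $w_0$: for any positive $k$ with $w_0 \mid k$, the power $g^{k/w_0}$ is then a holomorphic eta-quotient of level $N$ and weight $k$ lying in $M_k(\Gamma_0(N))$ (a product of eta-quotients is an eta-quotient, and a product of forms holomorphic at every cusp is holomorphic at every cusp). This simultaneously shows $\mathcal{E}_k(\Gamma_0(N)) \neq \emptyset$ for all admissible $k$ and proves the ``moreover'' clause. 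In case (1) one takes $g = \Delta(z) = \eta(z)^{24} \in M_{12}(\Gamma_0(1)) \subseteq M_{12}(\Gamma_0(N))$.

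The substance of the argument lies in exhibiting $g$ in cases (2), (3), (4): for each such $N$ one must find an integer vector $(r_\delta)_{\delta \mid N}$ with $\sum_{\delta \mid N} r_\delta = 2 w_0$ satisfying Newman's congruences \eqref{congruenceconditions} and having non-negative order at every cusp of $\Gamma_0(N)$ by Ligozat's formula. I would do this by a case analysis on the prime factorization of $N$, writing down explicit eta-quotients: for example $\eta(z)^{8}/\eta(2z)^{4}$ (or $\eta(4z)^{8}/\eta(2z)^{4}$) when $4 \mid N$, $\eta(z)^{3}\eta(9z)^{3}/\eta(3z)^{2}$ when $9 \mid N$, $\eta(z)^{2}\eta(pz)^{2}$ when a prime $p \equiv 11 \pmod{12}$ divides $N$, products such as $\eta(z)\eta(5z)\eta(7z)\eta(35z)$ for suitable squarefree levels, $\eta(z)^{4}\eta(pz)^{4}$ (weight $4$) in case (3) when $p \equiv 5 \pmod{12}$, $\eta(z)^{6}\eta(pz)^{6}$ (weight $6$) in case (2) when $p \equiv 3$ or $7 \pmod{12}$, and solving the small linear system directly in the remaining cases. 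The main obstacle is to make this case analysis genuinely exhaustive --- to cover \emph{every} $N$ with the prescribed values of $\epsilon_2(\Gamma_0(N))$ and $\epsilon_3(\Gamma_0(N))$ --- and to verify in each instance, via Ligozat's formula, that the candidate eta-quotient really is holomorphic at all the cusps.
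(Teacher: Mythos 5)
Your overall strategy is the same as the paper's: necessity via non-vanishing of eta-quotients at elliptic points, sufficiency by exhibiting one holomorphic eta-quotient of the minimal admissible weight $w_0$ and taking powers (which also yields the ``moreover'' clause). The necessity half is complete and matches the paper's argument, where the stabilizer computation is made explicit: at an order-$2$ point $cz+d = \pm i$ forces $4 \mid k$, and at an order-$3$ point $cz+d$ is a primitive cube or sixth root of unity, forcing $6 \mid k$ for even $k$.

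The gap is exactly where you say it is, and it is not cosmetic: your sufficiency argument never establishes that the case analysis is exhaustive, and that exhaustiveness is the entire content of the constructive half. The missing organizing principle is the explicit formula for the number of elliptic points (Diamond--Shurman, Corollary 3.7.2, quoted in the paper's Background section): $\epsilon_{2}(\Gamma_{0}(N)) = 0$ if and only if $4 \mid N$ or some prime $p \equiv 3 \pmod{4}$ divides $N$, and $\epsilon_{3}(\Gamma_{0}(N)) = 0$ if and only if $9 \mid N$ or some prime $p \equiv 2 \pmod{3}$ divides $N$. Once this is invoked, ``every $N$ in case (2), (3), or (4)'' reduces to a short, visibly complete list of templates rather than an open-ended ``case analysis on the prime factorization of $N$.'' Concretely, your list also omits the subcases where the relevant prime is $2$ but $4 \nmid N$: for $\epsilon_3 = 0$ witnessed by $2 \mid N$ one needs something like $\eta(z)^{16}/\eta(2z)^{8} \in M_{4}(\Gamma_{0}(N))$, and for case (4) with $p \equiv 3 \pmod 4$ and $q = 2$ one needs $\eta(z)^{4}\eta(pz)^{4}/\bigl(\eta(2z)^{2}\eta(2pz)^{2}\bigr) \in M_{2}(\Gamma_{0}(N))$ (your $\eta(z)^{4}\eta(pz)^{4}$ and $\eta(z)^{6}\eta(pz)^{6}$ templates fail Newman's congruences or give the wrong weight there). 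Each template must then be checked against \eqref{congruenceconditions} and Ligozat's formula, which you correctly identify but do not carry out. So the proposal is a faithful outline of the paper's proof with the decisive finite enumeration left undone.
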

\begin{proof}
First, suppose that $f$ is a weakly holomorphic modular form
of weight $k$ for $\Gamma_{0}(N)$. If $z$ is an elliptic point for
$\Gamma_{0}(N)$, then $\frac{az+b}{cz+d} = z$ for $M = \left[ \begin{matrix}
a & b \\ c & d \end{matrix} \right]$. It follows from this that
$|cz+d| = 1$. The transformation law gives that
\[
  f(z) = (cz+d)^{k} f(z).
\]
Thus, either $(cz+d)^{k} = 1$ or $f(z) = 0$. Corollary 2.3.4 of \cite{DS}
(page 55) implies that any elliptic point of order $2$ has the form
$\frac{ai+b}{ci+d}$ or $\frac{a \omega + b}{c \omega + d}$ and
where $\omega = e^{2 \pi i / 3}$. It follows from this fact
that if $z$ is an elliptic point of order $3$, then
$cz+d = \omega$ or $1 - \omega$, and if $z$ is an elliptic point of order $2$
then $cz+d = i$. It follows that if $k \not\equiv 0 \pmod{6}$ and
$z$ is an elliptic point of order $3$, then $f(z) = 0$. Also,
if $k \not\equiv 0 \pmod{4}$ and $z$ is an elliptic point of order $2$,
then $f(z) = 0$.

Assuming that $f(z)$ is a weakly holomorphic eta-quotient,
then we must have that $f(z)$ is non-vanishing on $\mathbb{H}$ and this
shows that in order for an eta-quotient to exist, the conditions in the
lemma must be satisfied.

It suffices to construct holomorphic eta-quotients in the remaining cases.
Since $\Delta(z) \in M_{12}(\Gamma_{0}(N))$ for all $N$, this shows that
there are always eta-quotients of weight $12$.

Assume that $\epsilon_{2}(\Gamma_{0}(N)) = 0$. Then either $4 | N$,
in which case $\frac{\eta(z)^{8}}{\eta(2z)^{4}} \in M_{2}(\Gamma_{0}(N))$,
or there is a prime $p | N$ with $p \equiv 3 \pmod{4}$. In this
case $\eta(z)^{6} \eta(pz)^{6} \in M_{6}(\Gamma_{0}(N))$ and so there are
eta-quotients of weight $6$.

Assume that $\epsilon_{3}(\Gamma_{0}(N)) = 0$. Then either $9 | N$,
in which case $\frac{\eta(z)^{6}}{\eta(3z)^{2}} \in M_{2}(\Gamma_{0}(N))$,
or there is a prime $p | N$ with $p \equiv 2 \pmod{3}$. If $p > 2$
then $p \equiv 5 \pmod{6}$ and $\eta(z)^{4} \eta(pz)^{4} \in M_{4}(\Gamma_{0}(N))$. If $p = 2$ then $\frac{\eta(z)^{16}}{\eta(2z)^{8}} \in M_{4}(\Gamma_{0}(N))$.
Thus, there is an eta-quotient of weight $4$ when $\epsilon_{3}(\Gamma_{0}(N)) = 0$.

Assume now that $\epsilon_{2}(\Gamma_{0}(N)) = \epsilon_{3}(\Gamma_{0}(N)) = 0$.
In the cases that $4 | N$ or $9 | N$ we have already constructed weight
$2$ eta-quotients. Otherwise, there is a prime $p | N$ with $p \equiv 3 \pmod{4}$ and a prime $q | N$ with $q \equiv 2 \pmod{3}$. If $p = q$,
then $p \equiv 11 \pmod{12}$ and $\eta(z)^{2} \eta(pz)^{2} \in M_{2}(\Gamma_{0}(N))$. If $p \ne q$ and $q > 2$ then $\eta(z) \eta(pz) \eta(qz) \eta(pqz) \in M_{2}(\Gamma_{0}(N))$, and if $p \ne q$ and $q = 2$ then $\frac{\eta(z)^{4} \eta(pz)^{4}}{\eta(2z)^{2} \eta(2pz)^{2}} \in M_{2}(\Gamma_{0}(N))$. Thus, there is
an eta-quotient of weight $2$ when $\epsilon_{2}(\Gamma_{0}(N)) = \epsilon_{3}(\Gamma_{0}(N)) = 0$.
\end{proof}

Now, we will prove Theorem~\ref{hologen}.

\begin{proof}[Proof of Theorem~\ref{hologen}]
  If the graded ring of modular forms for $\Gamma_{0}(N)$ is generated
  by eta-quotients, then $M_{2}(\Gamma_{0}(N))$ must be spanned by
  eta-quotients.  Since $M_{2}(\Gamma_{0}(N))$ has positive dimension
  if $N > 1$, and there can be no weight $2$ eta-quotients if
  $\Gamma_{0}(N)$ has elliptic points by Lemma~\ref{whenetas}, it
  follows that $\Gamma_{0}(N)$ has no elliptic points. (The only level
  $1$ eta-quotients are powers of $\Delta(z)$, so the same conclusion
  follows when $N = 1$.)

In each case, we attempt to find enough eta-quotients to span
$M_{2}(\Gamma_{0}(N))$. If we succeed and $N$ is composite, then
the fact that the graded ring of level $N$ modular forms is generated
in weight $2$ proves the desired result. In cases where we fail
to find enough eta-quotients, we enumerate all vectors in the corresponding
$d(N) - 1$-dimensional lattice that correspond to weight $2$ eta-quotients,
and check that the dimension of the space they span is less than
$\dim M_{2}(\Gamma_{0}(N))$ to prove that $M_{2}(\Gamma_{0}(N))$ is not
generated by eta-quotients.

There are no prime levels $N \leq 500$ for which $M_{2}(\Gamma_{0}(N))$
is generated by eta-quotients. (In fact, there are no prime levels whatsoever,
by Theorem~\ref{finitecodim}.) Details about the computations are available
at {\tt http://users.wfu.edu/rouseja/eta/}.
\end{proof}
Before we prove Theorem~\ref{expobound}, we need two lemmas first.
\begin{lem}
\label{vlem}
Suppose that $f(z) = \prod_{\delta | N} \eta(\delta z)^{r_{\delta}}$ is a level
$N$ eta quotient. Suppose that $e$ is a positive integer and
$\gcd(e,N) = 1$. Let $r | e$ and view $f(z) | V(r) = \prod_{\delta | N} \eta(r \delta z)^{r_{\delta}}$ as an eta quotient of level $eN$. Let $d$ be a divisor of $eN$ and write $d = d_{1} d_{2}$, where $d_{1} | N$ and $d_{2} | e$. Then we have
\[
  \ord_{\frac{1}{d}}(f(z) | V(r)) = \frac{e \gcd(d_{2}^{2}, r^{2})}{r \gcd(d_{2}^{2}, e)}
  \ord_{\frac{1}{d_{1}}}(f(z)).
\]
\end{lem}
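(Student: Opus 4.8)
The plan is to apply the theorem of Ligozat for the order of vanishing of an eta-quotient at a cusp \emph{twice} --- once to $f(z)|V(r)$ regarded as an eta-quotient of level $eN$, and once to $f(z)$ at level $N$ --- and then to reconcile the two expressions by simplifying greatest common divisors, the key input being the coprimality built into the hypothesis $\gcd(e,N) = 1$.

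First I would write $f(z)|V(r) = \prod_{\delta' \mid eN} \eta(\delta' z)^{s_{\delta'}}$, where $s_{r\delta} = r_{\delta}$ for $\delta \mid N$ and $s_{\delta'} = 0$ otherwise, so that Ligozat's formula (with $N$ replaced by $eN$ and the cusp taken to be $1/d$) gives
\[
  \ord_{\frac{1}{d}}(f|V(r)) = \frac{eN}{24} \sum_{\delta \mid N} \frac{\gcd(d, r\delta)^{2}\, r_{\delta}}{\gcd(d, eN/d)\, d\, r\delta}.
\]
Next, writing $d = d_{1} d_{2}$ with $d_{1} \mid N$ and $d_{2} \mid e$, the hypothesis $\gcd(e,N) = 1$ forces $\gcd(d_{1}, d_{2}) = \gcd(d_{1}, e) = \gcd(d_{2}, N) = 1$, and since $r \mid e$ we also get $\gcd(d_{1}, r) = 1$ and $\gcd(d_{2}, \delta) = 1$ for every $\delta \mid N$. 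I would use these facts to split each gcd into an ``$N$-part'' and an ``$e$-part'': $\gcd(d, r\delta) = \gcd(d_{1}, \delta)\gcd(d_{2}, r)$, and, from $eN/d = (N/d_{1})(e/d_{2})$, also $\gcd(d, eN/d) = \gcd(d_{1}, N/d_{1})\gcd(d_{2}, e/d_{2})$. Substituting these into the sum and pulling out the factors that do not involve $\delta$ leaves
\[
  \ord_{\frac{1}{d}}(f|V(r)) = \frac{e\,\gcd(d_{2}, r)^{2}}{r\,\gcd(d_{2}, e/d_{2})\, d_{2}} \cdot \frac{N}{24}\sum_{\delta \mid N} \frac{\gcd(d_{1}, \delta)^{2}\, r_{\delta}}{\gcd(d_{1}, N/d_{1})\, d_{1}\, \delta},
\]
and by Ligozat's formula at level $N$ the trailing factor is exactly $\ord_{\frac{1}{d_{1}}}(f)$. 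Note that the cancellation $\frac{eN}{24}\cdot\frac{24}{N} = e$ is precisely where the change of level contributes the factor $e$ appearing in the statement.

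It then remains to verify the purely elementary identity
\[
  \frac{\gcd(d_{2}, r)^{2}}{\gcd(d_{2}, e/d_{2})\, d_{2}} = \frac{\gcd(d_{2}^{2}, r^{2})}{\gcd(d_{2}^{2}, e)},
\]
which can be checked one prime at a time: the numerators agree because $\gcd(a,b)^{2} = \gcd(a^{2}, b^{2})$, and for the denominators, writing $\alpha = \ord_{p}(d_{2}) \le \beta = \ord_{p}(e)$, one has $\ord_{p}\big(\gcd(d_{2}, e/d_{2})\, d_{2}\big) = \min(\alpha, \beta - \alpha) + \alpha = \min(2\alpha, \beta) = \ord_{p}\big(\gcd(d_{2}^{2}, e)\big)$.

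I do not expect a serious obstacle here; the one place that demands care is the bookkeeping of which factors are coprime --- in particular the factorization $\gcd(d, eN/d) = \gcd(d_{1}, N/d_{1})\gcd(d_{2}, e/d_{2})$, which is exactly where the hypotheses $\gcd(e,N) = 1$ and $r \mid e$ are used. It is worth recording these coprimality facts explicitly before substituting into Ligozat's formula.
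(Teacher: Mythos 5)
Your proposal is correct and follows essentially the same route as the paper: apply Ligozat's formula to $f|V(r)$ at level $eN$, use $\gcd(e,N)=1$ to factor each gcd into its $N$-part and $e$-part, and pull the $\delta$-independent factor out of the sum, leaving Ligozat's formula for $f$ at level $N$. Your explicit verification of the identity $\gcd(d_{2},e/d_{2})\,d_{2}=\gcd(d_{2}^{2},e)$ is a step the paper leaves implicit, so no changes are needed.
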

\begin{proof}
We have $f(z) | V(r) = \prod_{\delta | eN} \eta(\delta z)^{s_{\delta}}$,
where $s_{\delta} = r_{\delta/r}$ if $r | \delta$ and $\delta/r | N$. Otherwise
$s_{\delta} = 0$. The order of vanishing at the cusp $1/d$ is then
\begin{align*}
  & \frac{Ne}{24} \sum_{\substack{\delta | Ne \\ r | \delta}}
  \frac{\gcd(d,\delta)^{2} s_{\delta}}{\gcd\left(d,\frac{Ne}{d}\right) d \delta}
  = \frac{Ne}{24} \sum_{\delta | N}
  \frac{\gcd(d,r \delta)^{2} r_{\delta}}{\gcd\left(d,\frac{Ne}{d}\right) dr \delta}\\
  &= \frac{Ne}{24r} \sum_{\delta | N}
  \frac{\gcd(d_{1}, \delta)^{2} \gcd(d_{2},r)^{2} r_{\delta}}{\gcd\left(d_{1},\frac{N}{d_{1}}\right) \gcd\left(d_{2},\frac{e}{d_{2}}\right) d_{1} d_{2} \delta}
  = \frac{e \gcd(d_{2}^{2}, r^{2})}{r \gcd(d_{2}^{2}, e)} \cdot
  \left(\frac{N}{24} \sum_{\delta | N}
  \frac{\gcd(d_{1}, \delta)^{2} r_{\delta}}{\gcd\left(d_{1},\frac{N}{d_{1}}\right) d_{1} \delta}\right)\\
  &= \frac{e \gcd(d_{2}^{2}, r^{2})}{r \gcd(d_{2}^{2}, e)} \ord_{\frac{1}{d_{1}}}(f(z)).
\end{align*}
\end{proof}

The eta-quotients constructed in the next lemma will play a role in the proofs
of many of the theorems.

\begin{lem}
\label{magicetas}
If $N$ is a positive integer, then for each divisor $d$ of $N$, there is
a holomorphic eta-quotient $E_{d,N}(z) \in M_{k_{d}}(\Gamma_{0}(N))$ that vanishes
only at the cusps $c/d$. Moreover, if $E_{d,N}(z) = \prod_{\delta | N} \eta(\delta z)^{r_{\delta}}$, we have
\[
  \frac{1}{2k_d} \sum_{\delta | N} |r_{\delta}| \leq \prod_{p | N} \left(\frac{p+1}{p-1}\right)^{\min \{2,\ord_{p}(N)\}}.
\]
We follow the convention that an empty product on the right hand side
takes the value $1$.
\end{lem}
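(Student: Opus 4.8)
The plan is to reduce to prime-power level $N=p^e$ by exploiting the multiplicativity of Ligozat's order-of-vanishing formula and of Newman's conditions \eqref{congruenceconditions} over the primes dividing $N$, after first writing down explicit eta-quotients at each prime.

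\emph{Step 1 (prime-power building blocks).} Fix a prime $p$, an exponent $e\ge 1$, and an integer $a$ with $0\le a\le e$. Define integers $\lambda_{p,a}(0),\dots,\lambda_{p,a}(e)$, all $0$ except that
\[
(\lambda_{p,a}(0),\lambda_{p,a}(1))=(2p,-2)\ \text{if }a=0,\qquad
(\lambda_{p,a}(e-1),\lambda_{p,a}(e))=(-2,2p)\ \text{if }a=e,
\]
and $(\lambda_{p,a}(a-1),\lambda_{p,a}(a),\lambda_{p,a}(a+1))=(-p,\,p^2+1,\,-p)$ if $0<a<e$ (the first two prescriptions agree when $e=1$). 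Let $g_{p,a}(z)=\prod_{f=0}^{e}\eta(p^f z)^{\lambda_{p,a}(f)}$, an eta-quotient of level $p^e$ (not necessarily a modular form). Using Ligozat's theorem together with $\gcd(p^b,p^f)=p^{\min(b,f)}$ and $\gcd(p^b,p^{e-b})=p^{\min(b,e-b)}$, I would compute $\ord_{c/p^b}(g_{p,a})$ for each $0\le b\le e$; in each of the three cases the inner sum over $f$ telescopes to $0$ unless $b=a$, and to $p^2-1$ times a positive power of $p$ when $b=a$. Hence $g_{p,a}$ has nonnegative order at every cusp of $X_0(p^e)$ and vanishes there only at the cusp of denominator $p^a$. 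One also records that $\sum_f\lambda_{p,a}(f)=2(p-1)$ and $\sum_f|\lambda_{p,a}(f)|=2(p+1)$ in the two endpoint cases, while $\sum_f\lambda_{p,a}(f)=(p-1)^2$ and $\sum_f|\lambda_{p,a}(f)|=(p+1)^2$ in the middle case; in particular $\sum_f\lambda_{p,a}(f)>0$ and
\[
\frac{\sum_f|\lambda_{p,a}(f)|}{\sum_f\lambda_{p,a}(f)}\ \le\ \Big(\frac{p+1}{p-1}\Big)^{\min\{2,\,e\}},
\]
since the middle case occurs only when $e\ge 2$.

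\emph{Step 2 (assembling $E_{d,N}$).} Write $N=\prod_i p_i^{e_i}$ and let $d=\prod_i p_i^{a_i}$ be a divisor of $N$. Put
\[
E_{d,N}(z)=\prod_{\delta\mid N}\eta(\delta z)^{r_\delta},\qquad r_\delta=24\prod_i\lambda_{p_i,a_i}\big(\ord_{p_i}(\delta)\big).
\]
Since $\delta\mapsto(\ord_{p_i}\delta)_i$ identifies the divisors of $N$ with tuples, $r_\delta$ is multiplicative in $\delta$, as are $\gcd(d',\delta)^2$, $\gcd(d',N/d')$, $d'$ and $\delta$ appearing in Ligozat's formula; carrying the resulting product of sums through, one obtains for $d'=\prod_i p_i^{b_i}\mid N$ that
\[
\ord_{c/d'}(E_{d,N})=24^{\omega(N)}\prod_i\ord_{c'/p_i^{b_i}}\!\big(g_{p_i,a_i}\big),
\]
where $c'$ denotes any integer coprime to $p_i$. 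By Step 1 this is a product of nonnegative numbers, equal to $0$ exactly when $b_i\ne a_i$ for some $i$ (i.e.\ exactly when $d'\ne d$) and positive when $d'=d$; thus $E_{d,N}$ is holomorphic and vanishes only at the cusps $c/d$. The same factorization makes each of Newman's three conditions hold: e.g.\ $\sum_\delta\delta r_\delta=24\prod_i\sum_f p_i^f\lambda_{p_i,a_i}(f)$ is divisible by $24$, and every prime's exponent in $\prod_\delta\delta^{r_\delta}$ is $24$ times an integer, hence even. Therefore $E_{d,N}\in M_{k_d}(\Gamma_0(N))$ with $k_d=\tfrac12\sum_\delta r_\delta=12\prod_i\sum_f\lambda_{p_i,a_i}(f)>0$. (When $N=1$ the products are empty and $E_{1,1}=\Delta$.)

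\emph{Step 3 (the bound) and the main difficulty.} Finally,
\[
\frac{1}{2k_d}\sum_{\delta\mid N}|r_\delta|=\frac{\sum_\delta|r_\delta|}{\sum_\delta r_\delta}=\prod_i\frac{\sum_f|\lambda_{p_i,a_i}(f)|}{\sum_f\lambda_{p_i,a_i}(f)}\ \le\ \prod_{p\mid N}\Big(\frac{p+1}{p-1}\Big)^{\min\{2,\,\ord_p(N)\}}
\]
by Step 1 applied factor by factor, the empty-product convention covering $N=1$. The only substantial work is the bookkeeping in Step 1: checking $\ord_{c/p^b}(g_{p,a})=0$ for every $b\ne a$ — splitting into the ranges $b<a-1$, $b\in\{a-1,a+1\}$, $b>a+1$ in the middle case and $b=0$ versus $b\ge 1$ (resp.\ $b\le e-1$ versus $b=e$) in the endpoint cases — and in particular verifying that the endpoint eta-quotients, which involve only $\eta(z),\eta(pz)$ (resp.\ $\eta(p^{e-1}z),\eta(p^e z)$), nonetheless fail to vanish at the interior cusps of the high-level curve $X_0(p^e)$. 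Everything else is a routine computation with the formulas of Ligozat and Newman.
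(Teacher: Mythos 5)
Your construction is correct, and it is essentially the paper's proof written in closed form: your three local exponent patterns $(2p,-2)$, $(-p,\,p^2+1,\,-p)$, $(-2,2p)$ are exactly the patterns the paper generates via $F^p/(F|V(p))$, $(F|V(p^s))^{p^2+1}/\bigl((F|V(p^{s-1}))^p(F|V(p^{s+1}))^p\bigr)$ and $(F|V(p^m))^p/(F|V(p^{m-1}))$ in its induction on the number of distinct prime factors, your multiplicative factorization of Ligozat's formula plays the role of the paper's Lemma~\ref{vlem}, and the per-prime multipliers $2(p\pm1)$ and $(p\pm1)^2$ give the same ratio bound. The one substantive difference is in your favor: the overall factor of $24$ in $r_\delta$ lets you verify Newman's conditions \eqref{congruenceconditions} explicitly, a point the paper's inductive argument leaves implicit.
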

\begin{proof}
We will prove our result by induction on the number of distinct prime factors
of $N$. The base case is $N = 1$. In this case we have $E_{1,1}(z) =
\Delta(z)$ which vanishes at the only cusp of $\Gamma_{0}(1)$ and we have
equality in the claimed inequality.

Assume now that $N$ is a positive integer with $k$ distinct prime factors,
one of which is $p$ and the highest power of $p$ dividing $N$ is $p^{m}$.
By induction, for each divisor $d$ of $N/p^{m}$, there is a form
\[
  F(z) = E_{d,N/p^{m}}(z)
\]
that has all of its zeros at cusps with denominator $d$ on
$\Gamma_{0}(N/p^{m})$.

Let $E_{d,N}(z) = \frac{F(z)^{p}}{F(z) | V(p)}$ and let $d_1$ and $d_2$ be divisors of $N/p^m$ and $p^m$ respectively. We now apply Lemma~\ref{vlem}
with $e = p^{m}$, $r = 1$ or $p$.  Then we get that
\[
  \ord_{1/d_{1} d_{2}}(E_{d,N}(z))
  = \left(\left(\frac{p^{m+1}}{\gcd(d_{2}^{2},p^{m})}\right) - \left(\frac{p^{m} \gcd(d_{2}^{2}, p^{2})}{p \gcd(d_{2}^{2},p^{m})}\right)\right) \ord_{1/d_{1}}(E_{d,N}(z)).
\]
It is clear that this quantity is always non-negative. Moreover, the
expression in parentheses is equal to zero if $p | d_{2}$. If $d_{1}
\ne d$, then $\ord_{1/d_{1}}(E_{d,N}(z)) = 0$. This implies that all
of the zeros of $E_{d,N}(z)$ occur at the cusps with denominator
$d$, as desired. The passage from $F(z)$ to $E_{d,N}(z)$ multiplies
the weight by $p-1$ and multiplies the sum of the absolute values of
the exponents by $p+1$.

For $1 \leq s \leq m-1$, define
\[
  E_{dp^{s},N}(z) = \frac{(F(z) | V(p^s))^{p^{2} + 1}}{(F(z) | V(p^{s-1}))^{p} (F(z) | V(p^{s+1}))^{p}}.
\]
Recall that $d_{1}$ is a divisor of $N/p^{m}$, $d_{2}$ is a divisor of $p^{m}$.
We apply Lemma~\ref{vlem} with $e = p^{m}$, $r = p^{s-1}$, $p^{s}$ or $p^{s+1}$
and we get that
\begin{align*}
  & \ord_{1/d_{1} d_{2}}(E_{dp^{s},N}(z))\\
  &= \left(\frac{(p^{2} + 1) \cdot p^{m} \gcd(d_{2}^{2},p^{2s})}{p^{s} \gcd(d_{2}^{2},p^{m})} - \frac{p \cdot p^{m} \gcd(d_{2}^{2},p^{2s-2})}{p^{s-1} \gcd(d_{2}^{2}, p^{m})} - \frac{p \cdot p^{m} \gcd(d_{2}^{2},p^{2s+2})}{p^{s+1} \gcd(d_{2}^{2}, p^{m})}\right) \ord_{1/d_{1}}(F(z))\\
  &= p^{m} \left(\frac{(p^{2} + 1) \gcd(d_{2}^{2},p^{2s}) - p^{2} \gcd(d_{2}^{2},p^{2s-2}) - \gcd(d_{2}^{2},p^{2s+2})}{p^{s} \gcd(d_{2}^{2}, p^{m})}\right) \ord_{1/d_{1}}(F(z)).\\
\end{align*}
If we write $d_{2} = p^{v}$, the numerator of the above fraction is
\[
  p^{\min\{2v+2,2s+2\}} + p^{\min\{2v,2s\}} - p^{\min\{2v+2,2s\}} -
  p^{\min\{2v,2s+2\}}.
\]
It is easy to see that if $s \ne v$, the above quantity is zero. If
$s = v$, it is $p^{2s+2} - p^{2s}$. This shows that $E_{dp^{s},N}(z)$ is
non-vanishing unless $d_{2} = p^{s}$ and $d_{1} = d$.  Moreover, the transfer
from $F(z)$ to $E_{dp^{s},N}(z)$ multiplies the weight by $(p-1)^{2}$
and the sum of the absolute values of the exponents by $(p+1)^{2}$. Hence,
the stated inequality is true.

Finally, we let $E_{dp^{m},N}(z) = \frac{(F(z) | V(p^{m}))^{p}}{F(z) | V(p^{m-1})}$.
We have
\[
  \ord_{1/d_{1} d_{2}}(E_{dp^{m},N}(z)) =
  \left(\frac{p \cdot p^{m} \gcd(d_{2}^{2}, p^{2m})}{p^{m} \gcd(d_{2}^{2}, p^{m})}
  - \frac{p^{m} \gcd(d_{2}^{2}, p^{2m-2})}{p^{m-1} \gcd(d_{2}^{2}, p^{m})}\right)
  \ord_{1/d_{1}}(F(z)).
\]
The quantity in parentheses is clearly zero unless $d_{2} = p^{m}$. If
$d_{2} = p^{m}$, it is $p^{m+1} - p^{m-1}$. This shows that
$E_{dp^{m},N}(z)$ is non-vanishing except at cusps with denominator
$dp^{m}$.  Moreover, the transfer from $F(z)$ to $E_{dp^{m},N}(z)$
multiplies the weight by $p-1$ and the sum of the absolute values of
the exponents by $p+1$. This proves the desired result by induction
on the number of distinct prime factors of $N$.
\end{proof}

\begin{proof}[Proof of Theorem~\ref{expobound}]
By Lemma~\ref{magicetas}, for each $d | N$, there is a form
$E_{d,N}(z)$ that vanishes only at the cusps $c/d$. We may therefore write an arbitrary eta-quotient $f(z)$ in the form
\[
  f(z) = \prod_{d | N} E_{d,N}(z)^{r_{d}}
\]
for a sequence of non-negative rational numbers $r_{d}$. Since the desired
inequality is true for each $E_{d,N}$, it must be valid for $f(z)$ as well.
\end{proof}

Next, we prove Corollary~\ref{primelevel}

\begin{proof}[Proof of Corollary~\ref{primelevel}]
Theorem~\ref{expobound} shows that if $p > 73$ and $f(z) = \prod_{\delta | 4p} \eta(\delta z)^{r_{\delta}} \in M_{2}(\Gamma_{0}(4p))$, then
$\sum_{\delta | 4p} |r_{\delta}| \leq 36$ and $\sum_{\delta | 4p} r_{\delta} = 4$. We
explicitly enumerate all such exponents and we find that
\[
  \text{ the number of eta-quotients } \in M_{2}(\Gamma_{0}(4p)) =
  \begin{cases}
     10 & \text{ if } p = 2\\
     126 & \text{ if } p = 3\\
     76 & \text{ if } p = 5\\
     45 & \text{ if } p = 7\\
     28 & \text{ if } p = 11\\
     9 & \text{ if } p \equiv 1 \pmod{24}\\
     16 & \text{ if } p \equiv 5, 11, \text{ or } 17 \pmod{24} \text{ and } p > 11\\
     21 & \text{ if } p \equiv 7 \pmod{24} \text{ and } p > 7\\
     15 & \text{ if } p \equiv 13 \pmod{24}\\
     18 & \text{ if } p \equiv 19 \pmod{24}\\
     37 & \text{ if } p \equiv 23 \pmod{24}.
\end{cases}
\]
A straightforward calculation with this data and with the dimension
formula for $M_{2}(\Gamma_{0}(4p))$ shows that if $p > 47$ then $\dim M_{2}(\Gamma_{0}(4p))$ is larger than the number of eta quotients it contains. Explicit
calculations for $p \leq 73$ prove the desired result.
\end{proof}

\begin{proof}[Proof of Theorem~\ref{finitecodim}]
We will first prove that $\mathcal{E}_{0}(\Gamma_{0}(N))$ has
finite codimension in $\mathcal{R}_{0}(\Gamma_{0}(N))$ when $N$ is composite.
Then $N$ has at least three divisors, say $1$, $d$ and $e$ with $1 < d < e$.

Let $E_{N,N}(z)$ be the eta-quotient from Lemma~\ref{magicetas} that has zeros only at $\infty$, and let $r$ be the weight of $E_{N,N}(z)$. Define
\[
  f_{1}(z) = \frac{\Delta(z)^{re-e+1} \Delta(dz)^{e-1}}{E_{N,N}(z)^{12e}} \text{ and }
  f_{2}(z) = \frac{\Delta(z)^{re-d+1} \Delta(ez)^{d-1}}{E_{N,N}(z)^{12e}}.
\]
These are both modular functions for $\Gamma_{0}(N)$ and the orders of the poles
of $f_{1}(z)$ and $f_{2}(z)$ at infinity are both
\[
  M =  12e \ord_{\infty}(E_{N,N})-(ke + de - d - e + 1) .
\]
Since $\Delta(z) = q - 24q^{2} + O(q^{3})$, it follows
that $\Delta(z)^{s} = q^{s} - 24sq^{s+1} + O(q^{s+2})$, and since the
power of $\Delta(z)$ in $f_{1}(z)$ and $f_{2}(z)$ are different, the coefficients
of $q^{-M+1}$ in $f_{1}(z)$ and $f_{2}(z)$ are different.

It follows that $\mathcal{E}_{0}(\Gamma_{0}(N))$ contains $f_{1}(z)$,
which has a pole of order $M$ at infinity, and $f_{1}(z) - f_{2}(z)$, which has
a pole of order $M-1$ at infinity.  Since every positive integer $n \geq M^{2}
- 3M+1$ can be written as a non-negative linear combination of $M$ and
$M-1$, the ring $\mathcal{E}_{0}(\Gamma_{0}(N))$ contains a function
with pole of order $n$ at infinity for every $n \geq M^{2}- 3M+1$. It follows
that every element of $\mathcal{R}_{0}(\Gamma_{0}(N))$ can be written in the form
\[
  f(z) = s(z) + t(z)
\]
where $s(z) \in \mathcal{E}_{0}(\Gamma_{0}(N))$ and $\ord_{\infty}
t(z) \geq -(M^{2} - 3M+1)$. The set of functions $t(z) \in
\mathcal{R}_{0}(\Gamma_{0}(N))$ with $\ord_{\infty} t(z) \leq -(M^{2} +3M+1)$ is a finite dimensional vector space (of dimension at most $M^{2} -3M + 1$) and this 
proves the result. Since
$\mathcal{E}_{k}(\Gamma_{0}(N))$ is a module for $\mathcal{E}_{0}(\Gamma_{0}(N))$,
we have that $\mathcal{E}_{k}(\Gamma_{0}(N))$ has finite codimension in
$\mathcal{R}_{k}(\Gamma_{0}(N))$ if $\mathcal{E}_{k}(\Gamma_{0}(N))$ is nonempty.
This occurs precisely when the conditions of Lemma~\ref{whenetas} are satisfied.

Suppose now that $N = p$ is prime. Any element in
$\mathcal{E}_{k}(\Gamma_{0}(p))$ is a linear combination of those
of the form $\eta(z)^{a} \eta(pz)^{2k-a}$ for some even integer $a$.
The order of vanishing of this form at infinity is $\frac{2kp - a(p-1)}{24}$.
We see that if $a$ and $a'$ are both such that
$f_{1}(z) = \eta(z)^{a} \eta(pz)^{2k-a}$ and $f_{2}(z) = \eta(z)^{a'} \eta(pz)^{2k-a'}$ are level $p$
eta-quotients, then the orders of vanishing of $f_{1}(z)$ and $f_{2}(z)$
at infinity must be congruent modulo $r = \frac{p-1}{\gcd(24,p-1)}$.

However, the Riemann-Roch theorem implies that if $g$ is the genus of
$X_{0}(N)$ and $m \geq 2g$, then there is an element in
$\mathcal{R}_{0}(\Gamma_{0}(p))$ with a pole of order $m$ at
infinity. The fact that $\mathcal{R}_{k}(\Gamma_{0}(p))$ is non-empty
and is a module for $\mathcal{R}_{0}(\Gamma_{0}(p))$ implies that
every sufficiently large positive integer occurs as the order of pole of
an element of $\mathcal{R}_{k}(\Gamma_{0}(p))$. This easily implies that if
\[
  r = \frac{p-1}{\gcd(24,p-1)} > 1
\]
then $\mathcal{E}_{k}(\Gamma_{0}(N))$ does not have finite codimension in $\mathcal{R}_{k}(\Gamma_{0}(N))$. We have
that $\frac{p-1}{\gcd(24,p-1)} = 1$ if and only if $p-1 | 24$ and this occurs
precisely for $p = 2, 3, 5, 7$ and $13$. If $p = 2, 3, 5, 7$ or $13$,
the fact that the order of vanishing of $f(z)$ at $\infty$ is $-1$ implies that $\mathcal{E}_{0}(\Gamma_{0}(p)) = \mathcal{R}_{0}(\Gamma_{0}(p))$.
This proves the desired result.
\end{proof}

\begin{proof}[Proof of Theorem~\ref{weakthm}]
  Again, let $E_{N,N}(z)$ be the holomorphic eta-quotient with zeros
  only at the cusp at $\infty$ from Lemma~\ref{magicetas}. Let the weight of $E_{N,N}$ be $r$.

  First, we will show that $(1) \implies (2)$. Suppose that for all
  positive integers $k$, every element of $M_{k}(\Gamma_{0}(N))$ is a
  linear combination of weakly holomorphic eta-quotients with poles
  only at infinity. Then there must be weight $2$ weakly holomorphic
  eta-quotients which implies (by Lemma~\ref{whenetas}) that $\Gamma_{0}(N)$
has no elliptic points. Suppose that $f \in \mathcal{R}_{k}(\Gamma_{0}(N))$. There is a
  positive integer $m$ for which $f E_{N,N}^{m}$ is holomorphic at
  infinity, and since $f$ only has poles at infinity, $f E_{N,N}^{m}
  \in M_{k+mr}(\Gamma_{0}(N))$.  By assumption, therefore, we may
  write
\[
  f E_{N,N}^{m} = \sum_{i} c_{i} g_{i}
\]
where the $g_{i}$ are weakly holomorphic eta-quotients with poles only
at infinity and so
\[
  f = \sum_{i} c_{i} \frac{g_{i}}{E_{N,N}^{m}} \in \mathcal{E}_{k}(\Gamma_{0}(N))
\]
is a linear combination of weakly holomorphic eta-quotients with poles
only at infinity, and so $\mathcal{R}_{k}(\Gamma_{0}(N)) = \mathcal{E}_{k}(\Gamma_{0}(N))$ for all positive even $k$. Observe that the map $T : \mathcal{R}_{k}(\Gamma_{0}(N)) \to \mathcal{R}_{r+k}(\Gamma_{0}(N))$ given by $T(f) = f E_{N,N}$ is an isomorphism. Similarly,
the map $T : \mathcal{E}_{k}(\Gamma_{0}(N)) \to \mathcal{E}_{r+k}(\Gamma_{0}(N))$
given by $T(f) = f E_{N,N}$ is also an isomorphism, and so
$\mathcal{R}_{k}(\Gamma_{0}(N)) = \mathcal{E}_{k}(\Gamma_{0}(N))$ if and only if
$\mathcal{R}_{k+r}(\Gamma_{0}(N)) = \mathcal{E}_{k+r}(\Gamma_{0}(N))$.
Hence, since $\mathcal{R}_{r}(\Gamma_{0}(N)) = \mathcal{E}_{r}(\Gamma_{0}(N))$,
it follows that $\mathcal{R}_{0}(\Gamma_{0}(N)) = \mathcal{E}_{0}(\Gamma_{0}(N))$.
Thus, $(1) \implies (2)$.

Note that $(2) \implies (1)$ is trivial, since
if $f(z) \in M_{k}(\Gamma_{0}(N))$, then
$f(z) \in \mathcal{R}_{k}(\Gamma_{0}(N)) = \mathcal{E}_{k}(\Gamma_{0}(N))$,
and so $f(z)$ is a linear combination of eta-quotients with poles only
at $\infty$.

It is obvious that $(2) \implies (3)$. We will now prove that
$(3) \implies (1)$.

For all $N \geq 1$, $\mathcal{R}_{2}(\Gamma_{0}(N))$ is non-zero,
since if $N > 1$ it contains $M_{2}(\Gamma_{0}(N))$, which has positive dimension, and for $N = 1$, it contains $\frac{E_{4}(z)^{2} E_{6}(z)}{\Delta(z)}$.
Therefore, the hypothesis that $\mathcal{R}_{2}(\Gamma_{0}(N)) = \mathcal{E}_{2}(\Gamma_{0}(N))$ implies that $\mathcal{E}_{2}(\Gamma_{0}(N))$ is non-empty. Lemma~\ref{whenetas} implies that $\Gamma_{0}(N)$ has no elliptic points. In particular, $N$ cannot equal $2$, $3$, $5$, $7$ or $13$.

Also, the hypothesis that $\mathcal{E}_{2}(\Gamma_{0}(N)) = \mathcal{R}_{2}(\Gamma_{0}(N))$ implies (by Theorem~\ref{finitecodim}) that $N \ne 11$ and $N$ cannot
be a prime greater than $13$. Thus, $N$ is composite.

Since $N$ is composite and $\Gamma_{0}(N)$ has no elliptic points,
the graded ring of modular forms for $\Gamma_{0}(N)$ is generated in weight $2$.
Since $M_{2}(\Gamma_{0}(N)) \subseteq \mathcal{R}_{2}(\Gamma_{0}(N)) =
\mathcal{E}_{2}(\Gamma_{0}(N))$, every weight $2$ modular form is a
linear combination of weakly holomorphic eta-quotients with poles only
at infinity. Thus, the graded ring of modular forms is generated by
forms that are linear combinations of weakly holomorphic eta-quotients,
and this proves that $(3) \implies (1)$.
\end{proof}

\begin{proof}[Proof of Theorem~\ref{weakgen}]
  Again, let $E_{N,N}$ be the level $N$ eta-quotient of weight $r$
  with zeros only at infinity from Lemma~\ref{magicetas}. In each
  case, we are able to prove that every form in
  $M_{r+2}(\Gamma_{0}(N))$ is a linear combination of (holomorphic) eta-quotients
and this proves that every element of $M_{2}(\Gamma_{0}(N))$ is a linear
combination of weakly holomorphic eta-quotients. Since the graded ring
of modular forms of level $N$ is generated in weight $2$, this proves
that every element of $M_{k}(\Gamma_{0}(N))$ is a linear combination of
weakly holomorphic eta-quotients. Then, by Theorem~\ref{weakthm},
we have that $\mathcal{R}_{2}(\Gamma_{0}(N)) = \mathcal{E}_{2}(\Gamma_{0}(N))$.
For code to compute eta-quotients in $M_{r+2}(\Gamma_{0}(N))$, see
{\tt http://users.wfu.edu/rouseja/eta/}.
\end{proof}


Before we begin the proof of Theorem~\ref{etaquotientthm}, we will need a series of preliminary results.

\begin{lem}\label{step 1}
Let $g(z) \in M_k(\Gamma_0(N)) \cap \Q((q))$ have the property that $g$ is non-zero on the upper half plane and let $d| N$ with $d>0$.  Then for any $\ell\in \Z$ with $\gcd(\ell,d)=1$, we have $\ord_{\frac{1}{d}}(g(z)) = \ord_{\frac{\ell}{d}} (g(z))$.
\end{lem}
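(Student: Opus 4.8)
The plan is to use the rationality of the Fourier expansion of $g$ together with the Galois action on the cusps of $X_0(N)$. The guiding observation is that, since $g \in M_k(\Gamma_0(N))$ is holomorphic and non-zero on $\H$, the divisor of $g$ is supported entirely on the cusps, and a form with rational $q$-expansion cannot distinguish two Galois-conjugate cusps.

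First I would reduce to the case $\gcd(\ell,N)=1$. The matrix $\left[\begin{matrix}1&1\\0&1\end{matrix}\right]$ lies in $\Gamma_0(N)$, so $\ell/d$ and $(\ell+md)/d$ are $\Gamma_0(N)$-equivalent for every $m\in\Z$, whence $\ord_{\ell/d}(g)=\ord_{(\ell+md)/d}(g)$. For a prime $p\mid N$ with $p\mid d$ we already have $p\nmid\ell$, hence $p\nmid\ell+md$; for a prime $p\mid N$ with $p\nmid d$ the condition $\ell+md\not\equiv 0\pmod p$ excludes only one residue class of $m$ modulo $p$, so by the Chinese remainder theorem we may pick $m$ with $\gcd(\ell+md,N)=1$ while keeping $\gcd(\ell+md,d)=1$. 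Thus we may assume $\gcd(\ell,N)=1$ from the outset.

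Next, regard $g$ as a global section of $\mathcal{L}^{\otimes k/2}$ on $X_0(N)$; since $g$ is holomorphic and non-vanishing on $\H$, the divisor $\Div(g)$ is effective and supported on the cusps. Because $g\in\Q[[q]]$, the $q$-expansion principle shows $g$ is defined over $\Q$, so $\Div(g)$ is stable under $\Gal(\overline{\Q}/\Q)$. Every cusp of $X_0(N)$ is defined over a cyclotomic field, so it suffices to use the automorphisms $\sigma_t$, $t\in(\Z/N\Z)^{\times}$. Here I would invoke the classical description of the Galois action on the cusps of $X_0(N)$ (see, e.g., \cite{DS} or \cite{Shimura}): $\sigma_t$ permutes the cusps, preserves the denominator of each cusp, and the $\phi(\gcd(d,N/d))$ cusps of denominator $d$ are defined over $\Q(\zeta_{\gcd(d,N/d)})$ and form a single $\Gal(\overline{\Q}/\Q)$-orbit. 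Hence some $\sigma_t$ carries the cusp $1/d$ to the cusp $\ell/d$.

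Finally, the $\sigma_t$-invariance of $\Div(g)$ gives that $g$ vanishes to the same intrinsic order at $1/d$ and at $\sigma_t(1/d)=\ell/d$. Since the width of a cusp of $\Gamma_0(N)$ depends only on its denominator, passing to the normalization used in the paper---where the local expansion is written in powers of $q^{1/N}$---rescales the order at $1/d$ and the order at $\ell/d$ by the same factor, so $\ord_{1/d}(g)=\ord_{\ell/d}(g)$. The step I expect to require the most care is the Galois input in the third paragraph, namely pinning down that the action on the cusps preserves the denominator and is transitive on the cusps of a fixed denominator; a variant that avoids the $q$-expansion principle is to track the $q^{1/w}$-expansion of $g$ at each cusp $a/d$ directly and use that $\sigma_t$ sends it to the expansion at a cusp of the same denominator (running over all of them as $t$ varies) while preserving the non-vanishing of the leading coefficient.
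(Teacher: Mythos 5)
Your proposal is correct and follows essentially the same route as the paper: both arguments rest on the Galois-stability of $\Div(g)$ (forced by the rational $q$-expansion together with the fact that all zeros lie at cusps) and on the cusps of $X_0(N)$ with a fixed denominator forming a single Galois orbit. The only differences are cosmetic: the paper descends to weight $0$ by dividing by $E_4(z)^a E_6(z)^b$ instead of invoking the $q$-expansion principle for sections, and it proves the needed transitivity of $\Gal(\Q(\zeta_N)/\Q)$ on the cusps of denominator $d$ explicitly (via Stevens's theorem plus a short matrix computation showing $\left[{1}\atop{\ell^* d}\right]=\left[{\ell}\atop{d}\right]$), whereas you cite this as classical.
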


\begin{proof}Since $k\in 2\Z$, there exist integers $a,b$ such that $4a+6b = k$.  This implies that
\[ G(z) := \dfrac{g(z)}{E_4(z)^a E_6(z)^b} \]
is a meromorphic modular function for $X_0(N)$.  Since $E_4(z), E_6(z) \in \Z[[q]]$, $G(z) \in \Q((q))$ as well.  Thus for any $\tau \in \Gal(\overline{\Q}/\Q)$, we have $G(z)^\tau = G(z)$, and thus $\Div(G(z))^\tau = \Div(G(z))$.  Furthermore, at any cusp $\frac{\ell}{d} \in \Q$, we have $\ord_{\frac{\ell}{d}} (G(z)) = \ord_{\frac{\ell}{d}} (g(z))$.  We note that since $G(z)$ is a meromorphic function on $\Gamma_0(N)$ with rational coefficients, we have $G(z) \in \Q(j(z),j(Nz))$ where $j$ is the modular $j$-function.

We note that $\Q(j(z),j(Nz))$ is the function field for a model of the modular curve $X_0(N)/\Q$. For $d,\ell \in \Z$ with $\gcd(\ell,d)=1$, we let $\left[{\ell}\atop{d} \right]$ denote the point on $X_0(N)$ associated to the cusp $\frac{\ell}{d}$.  We will use the following result.

\begin{thm}[adapted from Theorem 1.3.1 in \cite{stevens}] \label{stevensthm}\,

\begin{enumerate}
\item \label{stevensa} The cusps of $X_0(N)$ are rational over $\Q(\zeta_N)$, where $\zeta_N = e^{2 \pi i/N}$.
\item  \label{stevensb} For $s\in(\Z/N\Z)^{\times}$ let $\tau_s \in \Gal(\Q(\zeta_N)/\Q)$ be defined by
\[ \tau_s : \zeta_N \mapsto \zeta_N^s \, .\]
Then
\[ \left[ {\ell}\atop{d} \right]^{\tau_s} = \left[ {\ell}\atop{s' d} \right]
\]
where $s'\in \Z$ is chosen so that $s s' \equiv 1 \pmod{N}$.
\end{enumerate}
\end{thm}

Since $\Div(G(z))$ is fixed by each element of $ \Gal(\Q(\zeta_N)/\Q)$, part \ref{stevensa} of Theorem \ref{stevensthm} implies that for any two cusps $\frac{j}{d}, \frac{j'}{d'}$ such that there exists some $\tau_s \in \Gal(\Q(\zeta_N)/\Q)$ with $\left[ {j}\atop{d} \right]^{\tau_s} = \left[ {j'}\atop{d'} \right]$, we have $\ord_{\frac{j}{d}}(G(z))= \ord_{\frac{j'}{d'}}(G(z))$.  We will finish the proof of the lemma by showing that if $d |N$ and $\ell \in \Z$ with $\gcd(d,\ell)=1$, then
\begin{equation}\label{steponelastpart}
\left[ 1\atop{d} \right]^{\tau_{\ell^*}} = \left[ {\ell}\atop{d}\right]
\end{equation}
where $\ell^* \in \Z$ with $\ell^*  \equiv \ell \pmod{d}$ and $\gcd(\ell^*,N)=1$.

Choose $\ell'$ such that $\ell^* \ell' \equiv 1 \pmod{N}$.  Then there exists some $t \in \Z$ such that $\ell^ *\ell' + tN =1$, and $A,B\in \Z$ such that $A \ell' - BdtN = 1$.  Consider the matrix
\[ \gamma := \left[\begin{matrix} A & B \\ dtN & \ell' \end{matrix}\right] \in \Gamma_0(N)\, .\]
One computes that
\[ \gamma\left(\frac{1}{\ell^*d}\right) = \frac{A+B\ell^*d}{d} \, .\]
Our choice of $A$ implies that $A\equiv \ell \pmod{d}$. Thus there exists some $r\in \Z$ such that
\[ \frac{A+B\ell^*d}{d} = \frac{\ell}{d} +r \, .\]
This implies that $\left[ {\ell}\atop{d} \right]=\left[ {1}\atop{\ell^* d}\right]$, which by part \ref{stevensb} of Theorem \ref{stevensthm} implies \eqref{steponelastpart}, finishing the proof of the lemma.
\end{proof}

\begin{lem}\label{step 2}
Suppose $f(z) \in M_k(\Gamma_0(N))\cap \Z[[q]]$ has the property that $f(z)$ is non-zero on the upper half plane.  Then there exists a positive integer $c$ such that $f(z)^c= a g(z)$ where $a \in \Z$ and $g(z)$ is an eta-quotient.
\end{lem}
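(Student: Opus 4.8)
The plan is to construct the desired eta-quotient by hand out of the ``building block'' eta-quotients $E_{d,N}(z)$ of Lemma~\ref{magicetas}, arrange for a suitable power $f^{c}$ to have exactly the same divisor as a product of these blocks, and then show that the leftover ratio is a constant by a valence-formula argument. Throughout we may assume $f \ne 0$, the case $f=0$ being trivial.

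First I would pin down the divisor of $f$. Since $f$ is holomorphic and nonvanishing on $\H$ (in particular at the elliptic points of $\Gamma_{0}(N)$), $\Div(f)$ on $X_{0}(N)$ is supported at the cusps, so $a_{d} := \ord_{1/d}(f) \geq 0$ for each $d \mid N$. By Lemma~\ref{step 1} (which applies, since $f \in M_{k}(\Gamma_{0}(N)) \cap \Z[[q]] \subset \Q((q))$ is nonzero on $\H$), we in fact have $\ord_{\ell/d}(f) = a_{d}$ for every $\ell$ coprime to $d$; that is, the order of $f$ at a cusp depends only on that cusp's denominator. Likewise each $E_{d,N}$ is an eta-quotient, so by the Remark after Ligozat's theorem its order at a cusp $\ell/d'$ depends only on $d'$; by Lemma~\ref{magicetas} this order is $0$ when $d' \ne d$ and equals a fixed positive integer $v_{d}$ when $d' = d$.

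Next I would assemble the candidate. Put $s_{d} = a_{d}/v_{d} \in \Q_{\geq 0}$ and choose a positive integer $c$ with $c s_{d} \in \Z$ for all $d \mid N$ (for instance $c = \prod_{d \mid N} v_{d}$), and set
\[
  G(z) = \prod_{d \mid N} E_{d,N}(z)^{c s_{d}}.
\]
Because the exponents $c s_{d}$ are non-negative integers and each $E_{d,N}$ is both a holomorphic modular form on $\Gamma_{0}(N)$ and a level $N$ eta-quotient, $G$ is a holomorphic modular form on $\Gamma_{0}(N)$ that is a level $N$ eta-quotient, and it is nonvanishing on $\H$. By the previous paragraph, $\ord_{\ell/d}(G) = c s_{d} v_{d} = c a_{d} = \ord_{\ell/d}(f^{c})$ at every cusp $\ell/d$ of $\Gamma_{0}(N)$. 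Hence $h(z) := f(z)^{c}/G(z)$ is a meromorphic modular form on $\Gamma_{0}(N)$ of some even weight $w$, holomorphic and nonvanishing on $\H$ and of order $0$ at every cusp; thus $h$ has no zeros or poles on the compact curve $X_{0}(N)$, so the valence formula forces $w = 0$, and therefore $h$ is a holomorphic function on $X_{0}(N)$, i.e.\ a constant $a'$. Comparing $q$-expansions in $f^{c} = a' G$: the eta-quotient $G$, being holomorphic at $\infty$, has $q$-expansion $q^{m} \prod_{\delta \mid N} \prod_{n \geq 1}(1 - q^{\delta n})^{e_{\delta}}$ with $m \in \Z$ and leading coefficient $1$, whereas $f^{c} \in \Z[[q]]$ has leading coefficient $b_{0}^{c}$, where $b_{0} \in \Z$ is the leading coefficient of $f$; hence $a' = b_{0}^{c} \in \Z$. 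Taking $a = b_{0}^{c}$ and $g = G$ proves the lemma.

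The step I expect to be the crux is the passage from ``$f^{c}$ and $G$ have the same divisor at the cusps'' to ``$f^{c}$ and $G$ agree up to a scalar'': one has to rule out a nonconstant modular form of positive weight that is nonvanishing on $\H$ and at every cusp, and the valence formula is precisely what does this — it simultaneously reconciles the two weights $ck$ and $\sum_{d} c s_{d} k_{d}$ without a separate argument. A smaller, bookkeeping-type point is that the exponents $s_{d}$ are only rational, which is why we raise $f$ to the power $c$ from the outset; this is also what keeps $G$ a genuine eta-quotient with integer exponents (and a genuine modular form), rather than an eta-quotient carrying a nontrivial multiplier system.
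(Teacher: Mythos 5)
Your proposal is correct and follows essentially the same route as the paper: use Lemma~\ref{step 1} to see that the cuspidal divisor of $f$ depends only on denominators, match the divisor of $f^{c}$ with a non-negative integer combination of the $E_{d,N}$ from Lemma~\ref{magicetas}, and conclude the ratio is a constant; your choice of exponents ($c\cdot a_{d}/v_{d}$ versus the paper's lcm bookkeeping) is only cosmetically different. Your explicit valence-formula justification that the ratio has weight $0$, and the identification of the constant as $b_{0}^{c}\in\Z$, are slightly more detailed than the paper's (which defers the integrality of the constant to Corollary~\ref{step 3}).
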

\begin{proof}Let $d, j \in \Z$ with $d|N$ and $\gcd(j,n)=1$.  Then by Lemma \ref{step 1}, we have $\ord_{\frac1{d}}(f(z)) = \ord_{\frac{j}{d}} (f(z))$ for all such $d$ and $j$.  For each divisor $d$ of $N$, define
\[ c_d =\begin{cases} \dfrac{\lcm(\ord_{\frac{1}{d}}(f(z)),\ord_{\frac{1}{d}}(E_{d,N}(z)))}{\ord_{\frac{1}{d}}(f(z))} & \text{ if  } \ord_{\frac{1}{d}} (f(z))\neq 0\, , \\
1 & \text{ if } \ord_{\frac{1}{d}} (f(z)) = 0 \, , \end{cases}
\]
 and
\[ r_d =\begin{cases} \dfrac{\lcm(\ord_{\frac{1}{d}}(f(z)),\ord_{\frac{1}{d}}(E_{d,N}(z)))}{\ord_{\frac{1}{d}}(E_{d,N}(z))} & \text{ if } \ord_{\frac1{d}} (f(z)) \neq 0 \, , \\
0 &\text{ if } \ord_{\frac{1}{d}}(f(z)) = 0 \, ,\end{cases}
\]
where $E_{d,N}$ is defined as in Lemma~\ref{magicetas}.
Let $c= \prod c_d$.  Now define
\[ F(z) = \prod_{d \mid N} E_{d,N}(z)^{r_d c /c_d}\, .\]
One can check using Lemma \ref{step 1} and the definition of $E_{d,N}(z)$ that at each cusp $\frac{j}{d}$, we have
\[
\ord_{\frac{j}{d}}(f(z)^c) = \ord_{\frac{j}{d}}(F(z)) =\begin{cases}  \dfrac{c}{c_d} \lcm(\ord_{\frac{1}{d}}(f(z)),\ord_{\frac{1}{d}}(E_{d,N}(z))) & \text{ if } \ord_{\frac{1}{d}} (f(z))\neq 0\, , \\
0 & \text{ if } \ord_{\frac{1}{d}} (f(z)) = 0 \, . \end{cases}
\]
Since $f(z)^c$ and $F(z)$ are both holomorphic and non-zero on the upper-half plane, this implies that $f(z)^c / F(z) \in M_0^!(\Gamma_0(N)) \cap \Z((q))$ is a holomorphic non-zero function on the upper-half plane and at the cusps.  We conclude that there exists some $A \in \C$ such that $f(z)^c = A F(z)$, which completes the proof of the lemma.
\end{proof}

\begin{cor}\label{step 3}
Suppose $f(z)=\sum_{n=n_0}^\infty a(n)q^n \in M_k(\Gamma_0(N))\cap \Z[[q]]$ has the property that $f(z)$ is non-zero on the upper half plane.  Then $\frac{1}{a(n_0)} f(z) \in \Z[[q]]$, that is, every coefficient of $f(z)$ is divisible by the first non-zero coefficient.
\end{cor}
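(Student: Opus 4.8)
The plan is to derive this directly from Lemma~\ref{step 2} by a Gauss's-lemma (content) argument. By Lemma~\ref{step 2} there is a positive integer $c$, an integer $\lambda$, and an eta-quotient $g(z) = \prod_{\delta \mid N} \eta(\delta z)^{r_\delta}$ with $f(z)^c = \lambda\, g(z)$. Using $\eta(\delta z) = q^{\delta/24}\prod_{n \geq 1}(1 - q^{\delta n})$ together with $(1 - q^m)^{-1} = 1 + q^m + q^{2m} + \cdots \in \Z[[q]]$, one sees that the $q$-expansion of $g$ has the shape $q^{m_0}(1 + O(q))$, with all coefficients in $\Z$ and with leading coefficient exactly $1$. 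Comparing the lowest-order terms of $f(z)^c = \lambda g(z)$ gives $m_0 = c\,n_0$, and comparing the leading coefficients gives $\lambda = a(n_0)^c$. Hence
\[
  f(z)^c = a(n_0)^c\, g(z), \qquad g(z) \in \Z[[q]],
\]
and the Fourier coefficients of $g$ have greatest common divisor $1$.

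Next I would fix a prime $p$ and run a content computation in $\Z_p[[q]]$. For a nonzero $H \in \Z_p[[q]]$, write $H = p^{\mu_p(H)}\widetilde{H}$ with $\widetilde{H} \not\equiv 0 \pmod p$, where $\mu_p(H)$ is the least $p$-adic valuation among the coefficients of $H$. Since $\mathbb{F}_p[[q]]$ is an integral domain, a product of power series each nonzero mod $p$ is again nonzero mod $p$, so $\mu_p$ is additive and in particular $\mu_p(H^c) = c\,\mu_p(H)$. Applying this to $f(z)^c = a(n_0)^c\, g(z)$ and using $\mu_p(g) = 0$ (which is exactly the statement that the coefficients of $g$ have gcd $1$), we obtain $c\,\mu_p(f) = c\,v_p(a(n_0))$, i.e. $\mu_p(f) = v_p(a(n_0))$.

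Finally, since $\mu_p(f) = v_p(a(n_0))$ holds for every prime $p$, every Fourier coefficient $a(n)$ of $f$ satisfies $v_p(a(n)) \geq v_p(a(n_0))$ for all $p$, which means $a(n_0) \mid a(n)$ in $\Z$. Therefore $\frac{1}{a(n_0)} f(z) = \sum_{n \geq n_0} \frac{a(n)}{a(n_0)}\, q^n \in \Z[[q]]$, as claimed.

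I expect the only genuinely delicate point to be the observation that the eta-quotient $g$ has leading Fourier coefficient equal to $1$: this is precisely what makes $\mu_p(g) = 0$ and hence forces $\mu_p(f) = v_p(a(n_0))$. It is essential here both that we are extracting a $c$-th root of an honest eta-quotient and that $f$ has integral (not merely rational) coefficients, since the naive assertion ``a power series with constant term $1$ whose $c$-th power lies in $\Z[[q]]$ must itself lie in $\Z[[q]]$'' is false, as $\sqrt{1+q}$ shows. Once the reduction in the first paragraph is in place, there is no further computation to carry out.
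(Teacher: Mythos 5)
Your argument is correct, and it reaches the conclusion by a genuinely different route than the paper. Both proofs begin identically: invoke Lemma~\ref{step 2} to write $f(z)^c = A\,F(z)$ with $F$ an eta-quotient whose leading Fourier coefficient is $1$, and compare leading terms to get $A = a(n_0)^c$. Where you diverge is in extracting divisibility. You run a Gauss's-lemma content argument: defining $\mu_p(H)$ as the minimal $p$-adic valuation of the coefficients of $H \in \Z_p[[q]]$, the fact that $\mathbb{F}_p[[q]]$ is an integral domain makes $\mu_p$ additive, so $c\,\mu_p(f) = \mu_p(f^c) = c\,v_p(a(n_0)) + \mu_p(F) = c\,v_p(a(n_0))$, giving $\mu_p(f) = v_p(a(n_0))$ for every prime $p$ in one stroke. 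The paper instead fixes a prime $p \mid a(n_0)$, splits $f = pS(q) + R(q)$ with the coefficients of $R$ reduced into $[0,p-1]$, expands $f^c$ binomially to conclude $\tfrac{1}{p}R(q)^c \in \Z[[q]]$, and derives a contradiction from the leading coefficient $b(n')^c$ of $R^c$ unless $R = 0$; it then iterates over the prime factorization of $a(n_0)$ to handle higher prime powers. The two are morally the same---the paper's leading-coefficient contradiction for $R^c$ is exactly the ``product of nonzero elements of a domain is nonzero'' fact underlying your additivity of $\mu_p$---but yours is cleaner in two respects: it captures the full valuation $v_p(a(n_0))$ at once rather than peeling off one factor of $p$ per iteration, and it isolates the key structural input (content is multiplicative in $\Z_p[[q]]$) as a named, reusable principle. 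The paper's version buys elementarity: it never leaves $\Z[[q]]$ and requires no $p$-adic language. Your closing remark correctly identifies the essential point---that $F$ has unit content because its leading coefficient is $1$---and the $\sqrt{1+q}$ caveat is a fair warning about why integrality of $f$ itself cannot be dropped.
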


\begin{proof}Using the same notation as in the proof of Lemma \ref{step 2}, since the leading non-zero coefficient of the Fourier expansion of $F(z)$ is one, we have that $A=a(n_0)^c \in \Z$.  If $|a(n_0)|= 1$, the statement of the lemma is trivially true.  Now suppose $|a(n_0)|>1$, and let $p | a(n_0)$ be prime.  Let $f(z) = \sum_{n=n_0}^\infty a(n)q^n$ and define two series $S(q), R(q) \in \Z[[q]]$ by
\begin{align*} S(q) & = \sum_{n=n_0}^\infty \left\lfloor \frac{a(n)}{p} \right\rfloor q^n\, ,\\
R(q) & = \sum_{n=n_0}^\infty \left(a(n)- p\left\lfloor \frac{a(n)}{p} \right\rfloor \right) q^n \, .
\end{align*}
Note that each coefficient of $R(q)$ is in the set $[0, p-1]\cap \Z$.
Since $f(z)= pS(q)+R(q)$,
\[ \dfrac{1}{p} f(z)^c = \sum_{t=0}^{c} p^{t-1}\binom{c}{t}S(q)^tR(q)^{c-t} = \dfrac{a(n_0)^c}{p} F(z) \, .\]
For each $t\geq 1$, each summand above lies in $\Z[[q]]$.  Since $p|a(n_0)$ and $F(z) \in \Z[[q]]$, we see that $\frac{1}{p} f(z)^c \in \Z[[q]]$ as well.  This implies that the summand $\frac{1}{p} R(q)^c \in \Z[[q]]$.  Suppose $R(q) \neq 0$ and let $1 \leq b(n')\leq p-1$ be the first non-zero coefficient of $R(q)$.  Then the $cn'$-th coefficient of $\frac{1}{p} R(q)^c$ would be $\frac{b(n')^c}{p}$ which would imply that $p | b(n')$, contradicting that $R(q)$ is non-zero.  Thus $\frac{1}p f(z) \in \Z[[q]]$.  Iterating this argument over the prime factorization of $a(n_0)$ leads to the desired conclusion: $\frac1{a(n_0)} f(z) \in \Z[[q]]$.
\end{proof}

\begin{lem}\label{step 4}
Suppose $h(z) \in \Z[[q]]$ can be expressed as $h(z) = \prod_{d| N} \eta(dz)^{s_d}$ where each $s_d \in \Q$.  Then each $s_d \in \Z$--in other words, $h(z)$ is an eta-quotient.
\end{lem}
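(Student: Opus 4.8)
The plan is to reduce the statement to a purely formal fact about power series and then extract the exponents $s_{d}$ one at a time from low-order Fourier coefficients. Write
\[ h(z) = q^{m} P(q), \qquad m = \tfrac{1}{24}\sum_{d \mid N} d\, s_{d}, \qquad P(q) = \prod_{d \mid N}\Bigl(\prod_{n \geq 1}(1 - q^{dn})\Bigr)^{s_{d}}, \]
where each inner factor is expanded via the binomial series, so that $P(q)$ is a formal power series in $q$ with rational coefficients and constant term $1$. Since $h(z) \in \Z[[q]]$ and the lowest-order term of $q^{m}P(q)$ is $q^{m}$ with coefficient $1$, the exponent $m$ must be a non-negative integer; dividing through by $q^{m}$ then shows $P(q) \in \Z[[q]]$. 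Thus it suffices to prove: if $P(q) \in \Z[[q]]$, then $s_{d} \in \Z$ for every $d \mid N$.

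For each divisor $d$ of $N$ put $Q_{d}(x) = \bigl(\prod_{n \geq 1}(1 - x^{n})\bigr)^{s_{d}} \in \Q[[x]]$, so that $P(q) = \prod_{d \mid N} Q_{d}(q^{d})$. I will use two elementary facts. First, by the pentagonal-number expansion $\prod_{n \geq 1}(1 - x^{n}) = 1 - x - x^{2} + \cdots$, the series $Q_{d}(x)$ has constant term $1$ and its coefficient of $x$ equal to $-s_{d}$. Second, if $s_{d} \in \Z$ then $Q_{d}(x) \in \Z[[x]]$ (expanding $(1 - x^{n})^{-1} = 1 + x^{n} + \cdots$ when $s_{d} < 0$), so $Q_{d}(q^{d})$ is a unit in $\Z[[q]]$.

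Now suppose, for contradiction, that not all $s_{d}$ are integers, and let $d_{0}$ be the smallest divisor of $N$ with $s_{d_{0}} \notin \Z$. Every divisor $d < d_{0}$ of $N$ has $s_{d} \in \Z$, so $\prod_{d \mid N,\, d < d_{0}} Q_{d}(q^{d})$ is a unit in $\Z[[q]]$, and hence
\[ \widetilde{P}(q) := P(q)\cdot \prod_{\substack{d \mid N \\ d < d_{0}}} Q_{d}(q^{d})^{-1} \;=\; \prod_{\substack{d \mid N \\ d \geq d_{0}}} Q_{d}(q^{d}) \;\in\; \Z[[q]]. \]
Reading off the coefficient of $q^{d_{0}}$ on the right: each factor $Q_{d}(q^{d})$ with $d \geq d_{0}$ is supported on exponents divisible by $d$, so a monomial $q^{d_{0}}$ can only be produced by taking the constant term $1$ from every factor with $d > d_{0}$ and the degree-$d_{0}$ term $-s_{d_{0}} q^{d_{0}}$ from the factor $Q_{d_{0}}(q^{d_{0}})$. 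Hence the coefficient of $q^{d_{0}}$ in $\widetilde{P}(q)$ is $-s_{d_{0}}$, which must therefore be an integer --- contradicting the choice of $d_{0}$. So every $s_{d}$ is an integer, which is the claim.

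The argument is short, and the step I expect to need the most care is the reduction in the first paragraph, namely verifying that $h \in \Z[[q]]$ really forces $P(q) \in \Z[[q]]$ (not merely that $h$ has integral coefficients after some unspecified shift), together with checking that the eta-factors carrying integer exponents are genuinely units in $\Z[[q]]$ so that dividing them out is legitimate and preserves integrality.
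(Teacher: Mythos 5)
Your proof is correct and follows essentially the same route as the paper's: expand each factor $\bigl(\prod_{n\geq 1}(1-q^{dn})\bigr)^{s_d}$ by the binomial series so that its first nonconstant term is $-s_d q^d$, then work through the divisors of $N$ in increasing order, reading off integrality of each $s_d$ from the coefficient of $q^{d}$ after normalizing away the leading power of $q$. The only cosmetic difference is that you divide out the already-integral factors as units of $\Z[[q]]$, whereas the paper keeps them in place and observes that the relevant coefficient equals $b(r+d_\ell)-s_{d_\ell}$ with $b(r+d_\ell)\in\Z$.
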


\begin{proof} Let $d_0= 1, d_1, d_2, \ldots , d_t=N$ be the divisors of $N$, and let $r = \ord_{\infty}(h(z))$.  Newton's generalized binomial theorem implies
\[ \prod_{n\geq 1} (1-q^{dn})^s = 1 - s q^d + O(q^{2d}) \, .\]
Thus we have
\[ h(z) = q^{r} \prod_{m=0}^t (1-s_{d_m}q^{d_m} + \ldots) \, .\]
We proceed by induction on the divisors of $N$.  For $d_0=1$, we see that $-s_{d_0}$ is the coefficient of $q^{r+1}$ in the Fourier expansion of $h(z)$.  Since all of $h(z)$'s coefficients are integral, we conclude that $s_{d_0}\in \Z$.  Now suppose that $s_{d_m} \in \Z$ for all $0\leq m < \ell$.  Write
\[ q^r \prod_{m=0}^{\ell-1}\prod_{n\geq1} (1-q^{d_m n})^{s_{d_m}} = \sum_{n=r}^\infty b(n)q^n \, ,\]
where each $b(n) \in \Z$.
Then the coefficient of $q^{r+d_\ell}$ in the Fourier expansion of $h(z)$ would be $b(r+d_\ell)-s_{d_\ell}$, and again we conclude that $s_{d_\ell} \in \Z$ as well.
\end{proof}

\begin{proof}[Proof of Theorem~\ref{etaquotientthm}.]  By Lemma \ref{step 2} and Corollary \ref{step 3}, $f(z)=\sum_{n=n_0}^\infty a(n)q^n$ is an integer multiple of a quotient of eta-functions to rational powers with $\frac1{a(n_0)} f(z) \in \Z[[q]]$.  Lemma \ref{step 4} then implies that $f(z)$ is in fact an eta-quotient multiplied by $a(n_0)$.
\end{proof}

We now turn to the proof of Theorem~\ref{level-2^k-rational-cusp-sbgrp-structure}.  When $1\leq k \leq 6$, the theorem can be verified by direct computations.  Suppose $k\geq 7$.  Our first step will be to identify a minimal generating set for the group $\mathcal{M}(2^k)$.  Define the set $\{ f_{\ell,k}(z)\}_{\ell=0}^{k-1} \subset \mathcal{M}(2^k)$ as follows.  Let
\[ f_{0,k}(z) = \dfrac{\Delta(2^{k-1} z)}{\Delta(2^k z)} \, .\]
For $1\leq \ell \leq 3$, let
\[ f_{\ell,k}(z) = \begin{cases}
\dfrac{\eta(2^\ell z)^5 \eta(2^{k-1} z)^4}{\eta(2^{\ell-1}z)^2 \eta(2^{\ell+1}z)^2 \eta(2^{k-2}z) \eta(2^k z)^4 } & \text{ if } \ell \equiv k \pmod{2} \, , \\
\vspace{-0.15in}\\
\dfrac{\eta(2^\ell z)^5 \eta(2^{k-1} z) }{\eta(2^{\ell-1}z)^2 \eta(2^{\ell+1}z)^2 \eta(2^k z)^2} & \text{ if } \ell \not\equiv k \pmod2 \, . \end{cases}\]
For $4\leq \ell \leq k-2$, let
\[ f_{\ell,k}(z) = \begin{cases} \dfrac{\eta(2^{\ell} z)^2 \eta(2^{k-1} z)}{\eta(2^{\ell-1} z) \eta(2^k z)^2} & \text{ if } \ell \equiv k \pmod2 \, , \\
\vspace{-0.15in}\\
\dfrac{\eta(2^\ell z)^2 \eta(2^{k-1}z)^4 }{\eta(2^{\ell-1} z) \eta(2^{k-2} z) \eta(2^k z)^4 } & \text{ if } \ell \not\equiv k \pmod2 \, . \end{cases} \]
Finally, let
\[ f_{k-1,k}(z) = \dfrac{\eta(2^{k-1} z)^6}{\eta(2^{k-2} z)^2 \eta(2^k z)^4} \, .\]

\begin{lem}\label{level-2^k-basis-lemma}
The set $\{f_{\ell,k}\}_{\ell=0}^{k-1}$ is a minimal generating set for the multiplicative group $\mathcal{M}(2^k)$.
\end{lem}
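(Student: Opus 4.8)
The plan is to translate the lemma into a statement about a lattice of eta-quotient exponent vectors and then to recognize $\{f_{\ell,k}\}_{\ell=0}^{k-1}$ as a $\Z$-basis by a triangular-matrix computation whose determinant matches the lattice's index. First I would use Corollary~\ref{modularunitscor}: every element of $\mathcal{M}(2^{k})$ equals $\pm$ a weight-$0$ eta-quotient $\prod_{j=0}^{k}\eta(2^{j}z)^{r_{j}}$, and by Newman's classification of which eta-quotients are modular forms for $\Gamma_{0}(N)$, such a product lies in $\mathcal{M}(2^{k})$ if and only if $\mathbf r=(r_{0},\dots,r_{k})$ satisfies $\sum_{j}r_{j}=0$ and the congruences \eqref{congruenceconditions} hold. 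Thus $g\mapsto\mathbf r(g)$ identifies $\mathcal{M}(2^{k})$, modulo the constant $-1$ (which has trivial divisor and so is irrelevant for the ensuing computation of $J_{0}(2^{k})(\Q)_{\text{cusp}}$), with the lattice $G$ of all such vectors. It therefore suffices to show that $\{\mathbf r(f_{\ell,k})\}_{\ell=0}^{k-1}$ is a $\Z$-basis of $G$; minimality then follows once $G$ is known to be free of rank $k$.

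Next I would describe $G$ explicitly. Since $k\ge 7$, reducing $\sum_{j}2^{j}r_{j}\equiv\sum_{j}2^{k-j}r_{j}\equiv 0\pmod{24}$ modulo $8$ and modulo $3$ and using $\sum_{j}r_{j}=0$ shows that, inside $H:=\{\mathbf r\in\Z^{k+1}:\sum_{j}r_{j}=0\}\cong\Z^{k}$, the lattice $G$ is cut out by the congruences $r_{0}+2r_{1}+4r_{2}\equiv 0\pmod 8$, $r_{k}+2r_{k-1}+4r_{k-2}\equiv 0\pmod 8$, $\sum_{j\text{ odd}}r_{j}\equiv 0\pmod 2$, and $\sum_{j\text{ even}}r_{j}\equiv 0\pmod 3$. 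A short check that the mod-$2$ (resp.\ mod-$3$) reductions of these linear forms are independent as functionals on $H/2H$ (resp.\ $H/3H$) gives $[H:G]=8\cdot 8\cdot 2\cdot 3=384$; in particular $G\cong\Z^{k}$.

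The heart of the argument is then a direct computation. One first checks, splitting on whether $\ell\equiv k\pmod 2$, that each $\mathbf r(f_{\ell,k})$ does satisfy those four congruences, so lies in $G$. One then reads off from the definitions that the lowest-index nonzero exponent of $f_{0,k}$ occurs at $2^{k-1}$ with value $24$, that of $f_{k-1,k}$ at $2^{k-2}$ with value $-2$, and that of $f_{\ell,k}$ for $1\le\ell\le k-2$ at $2^{\ell-1}$, with value $-2$ if $\ell\le 3$ and $-1$ if $\ell\ge 4$. Listing the vectors in the order $\mathbf r(f_{1,k}),\dots,\mathbf r(f_{k-2,k}),\mathbf r(f_{k-1,k}),\mathbf r(f_{0,k})$ and discarding the redundant $r_{2^{k}}$-coordinate therefore produces a $k\times k$ upper-triangular integer matrix with diagonal $(-2,-2,-2,-1,\dots,-1,-2,24)$ (with $k-5$ entries equal to $-1$), whose determinant is $\pm 16\cdot 24=\pm 384$. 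Hence $\langle\mathbf r(f_{\ell,k})\rangle$ has index $384$ in $H$; being a subgroup of $G$, which also has index $384$, it equals $G$. Since $G\cong\Z^{k}$, it cannot be generated by fewer than $k$ elements, so the set is minimal.

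The main obstacle is organizational rather than conceptual: one must carry the parity case-split $\ell\equiv k\pmod 2$ cleanly through both the membership verification $\mathbf r(f_{\ell,k})\in G$ and the computation of the leading exponents, and must check (this is where the hypothesis $k\ge 7$ enters) that the supports of the various $f_{\ell,k}$ do not overlap in a way that spoils the upper-triangular shape or alters a diagonal entry. The one point requiring foresight is recognizing that the integer exponents appearing in the $f_{\ell,k}$ are exactly calibrated so that the determinant above equals the index $384=[H:G]$; the small cases $1\le k\le 6$ are excluded here and disposed of by direct computation.
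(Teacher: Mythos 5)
Your proposal is correct, but it proves the lemma by a genuinely different mechanism than the paper. The paper's proof is a descent: given an arbitrary $g = \prod \eta(2^i z)^{r_i} \in \mathcal{M}(2^k)$ with lowest nonzero exponent at index $i^*$, it uses the congruences \eqref{congruenceconditions} \emph{locally} (with separate analyses for $i^* \le 2$, $i^* = k-2$, and $i^* = k-1$) to show that $r_{i^*}$ is divisible by the leading exponent of $f_{i^*+1,k}$, multiplies by the appropriate power of $f_{i^*+1,k}$ to strictly increase $i^*$, and terminates at an integer power of $f_{0,k}$. You instead compute the index of the exponent lattice $G$ inside $H = \{\sum_j r_j = 0\}$ globally from the congruence conditions (getting $384$) and match it against the determinant of the echelon matrix of the vectors $\mathbf r(f_{\ell,k})$, whose pivots $(-2,-2,-2,-1,\dots,-1,-2,24)$ also multiply to $\pm 384$. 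Both arguments ultimately rest on the same echelon structure of the $f_{\ell,k}$, but the trade-offs differ: your route needs the full ``only if'' direction of Newman's criterion plus an independence check of the congruence functionals (your Frattini-type argument that independence mod $2$ suffices for surjectivity onto $(\Z/8)^2 \times \Z/2$ is the one step you should spell out), while the paper needs only pointwise divisibility of leading exponents and never computes $[H:G]$. In exchange, your version delivers minimality for free ($G \cong \Z^k$ needs $k$ generators), a point the paper's proof does not explicitly address, and it handles the boundary cases $i^* \in \{k-2, k-1\}$ without separate case analysis. You are also right to flag, as the paper does not, that strictly speaking $\mathcal{M}(2^k)$ contains the torsion element $-1$, so the $f_{\ell,k}$ generate only the free part; this is harmless for the application to $J_0(2^k)(\Q)_{\text{cusp}}$ since $-1$ has trivial divisor.
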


\begin{proof} Let $g(z) = \prod_{i=0}^k \eta(2^i z)^{r_i} \in \mathcal{M}(2^k)$.  Let $i^*$ be the minimal $i$ such that $r_i$ is non-zero.  If $i^* \leq 2$, then $r_{i^*}$ must be even since $\sum_{i=i^*}^k 2^i r_i \equiv 0 \pmod{ 2^{i^*+1}}$ by \eqref{congruenceconditions}.  Suppose that $i^*=k-2$.  Then $r_{k-2}+r_{k-1}+r_{k}=0$ since $g(z)$ is weight 0.  By \eqref{congruenceconditions}, the congruence
\[ 4r_{k-2}+2r_{k-1}+r_k \equiv 0 \pmod{24} \]
must hold, which implies that $r_k$ must be even.  If $k$ is odd, then $r_{k-2}+r_{k} \equiv 0\pmod{2}$ by the third condition in \eqref{congruenceconditions}.  Thus $r_{k-2}$ must be even as well.  On the other hand, if $k$ is even, then by the third condition in \eqref{congruenceconditions}, $r_{k-1}$ must be even.  Plugging in $-r_{k-2}-r_{k-1}$ into the congruence above leads to $r_{k-1} \equiv -3 r_{k-2} \pmod{24}$.  This implies that $r_{k-2} \equiv r_{k-1} \pmod{2}$, and $r_{k-2}$ is even as well.  If $i^* = k-1$, then $r_{k-1}=-r_{k}$ since $g(z)$ is weight 0.  Combining this with the congruence condition $2 r_{k-1}+r_{k} \equiv 0 \pmod{24}$ implies that $r_{k-1}$ is a multiple of 24, and thus $g(z)$ is an integer power of $f_{0,k}(z)$.

 The proof of Lemma \ref{level-2^k-basis-lemma} now follows in this way.  For $g(z) \in \mathcal{M}(2^k)$, if $i^* < k-1$, then $r_{i^*}$ is a multiple of the power of $\eta(2^{i^*}z)$ in $f_{i^*+1,k}(z)$.  Thus for the appropriate power $c$, $f_{i^*+1,k}(z)^c g(z)= \prod_{i=i^*+1}^k \eta(2^i z)^{r'_i} \in \mathcal{M}(2^k)$.  This process continues until we are left with a function which is an integer power of $f_{0,k}(z)$.
\end{proof}

\begin{proof}[Proof of Theorem~\ref{level-2^k-rational-cusp-sbgrp-structure}.]
We continue our assumption that $k\geq 7$, and make the further restriction that $k$ be even.  The proof for odd $k$ is very similar.

For odd $j$, let the cusp $\frac{j}{2^t} \in \Q$ correspond to the
point $\left[ {{j}\atop{2^t}}\right] \in X_0(2^k)$.  Define $C :=
\left\{\left[ {{j} \atop {2^t}} \right]\right\}$ to be a complete set
of representatives of the cusps of $X_0(2^k)$.  For any divisor $d =
\sum_{\left[ {{j}\atop{2^t}}\right]\in C} n_{j,t}\left[
  {{j}\atop{2^t}}\right]\in \DIV^0(X_0(2^k)/\Q)$ whose support only
includes cusps, it follows from the proof of Lemma \ref{step 1}
that $n_{j,t} = n_{j',t}$ for all cusps $[ {{j}\atop{2^t}}], [
{{j'}\atop{2^t}}]$ of $X_0(2^k)$ that share a common denominator.  For
$1 \leq \ell \leq k-1$, $\ell \neq \frac{k}2$, define the divisors
$d_{\ell,k} = \sum_{\left[{{j}\atop{2^t}}\right]\in
  C}n_{t,\ell,k}\left[ {{j}\atop{2^t}}\right]$ as follows.  For $\ell
=1$ let
\[ n_{t,1,k} = \begin{cases} 1 & \text{ for } t=1 \\
0 & \text{ for } t=0 , 2\leq t \leq k-1 \\
-1 & \text{ for } t = k \, . \end{cases} \]
For $\ell = 2$ let
\[ n_{t,2,k} = \begin{cases} 1 & \text{ for } t=2, k-1 \\
0 & \text{ for } t=0,1 \, , \, 3\leq t \leq k-2 \\
-3 & \text{ for } t = k \, . \end{cases} \]
For $\ell = 3$ let
\[ n_{t,3,k} = \begin{cases} 1 & \text{ for } t=3 \\
0 & \text{ for } t=0,1,2 \, , \, 4 \leq t \leq k-1 \\
-4 & \text{ for } t=k \, .\end{cases} \]
For $4\leq \ell < \frac{k}2$, $\ell$ odd, let
\[ n_{t,\ell, k} = \begin{cases} 1 & \text{ for } t=\ell \\
0 & \text{ for } 0 \leq t \leq \ell-1\, , \, \ell+1 \leq t \leq k-2 \\
2^{\ell-1} & \text{ for } t = k-1 \\
-2^\ell & \text{ for } t = k \, . \end{cases} \]
For $4 \leq \ell < \frac{k}2$, $\ell$ even, let
\[ n_{t, \ell, k} = \begin{cases} 1 & \text{ for } t=\ell \\
0 & \text{ for } 0 \leq t \leq \ell-1\, , \, \ell+1 \leq t \leq k-2 \\
-2^{\ell-2} & \text{ for } t = {k-1} \\
-2^{\ell-2} & \text{ for } t = k \, . \end{cases} \]
For $\frac{k}2 < \ell \leq k-3$, $\ell$ odd, let
\[ n_{t,\ell, k} = \begin{cases} 0 & \text{ for } 0 \leq t \leq \ell-1 \\
1 & \text{ for } \ell \leq t \leq k-2 \\
1+2^{k-\ell - 1} & \text{ for } t= k-1 \\
-3\cdot 2^{k-\ell -1} + 1 & \text{ for } t = k \, . \end{cases}\]
 For $\frac{k}2 < \ell \leq k-2$, $\ell$ even, let
\[ n_{t,\ell, k} = \begin{cases} 0 & \text{ for } 0 \leq t \leq \ell-1 \\
1 & \text{ for } \ell \leq t \leq k-1 \\
-2^{k-\ell } + 1 & \text{ for } t = k \, . \end{cases}\]
For $\ell = k-1$, let
\[ n_{t,\ell, k} = \begin{cases} 0 & \text{ for } 0 \leq t \leq k-2 \\
1 & \text{ for }  t = k-1 \\
-1& \text{ for } t = k \, . \end{cases}\]

One calculates that for $1 \leq \ell \leq 3$,
\begin{equation}\label{divisor-relation-1} 2^{k-\ell - 3} d_{\ell,k} = 2^{i_{k-2\ell-1,k}}d_{\ell,k}=\Div(f_{\ell,k}) \, , \end{equation}
for $4 \leq \ell < \frac{k}2$,
\begin{equation}\label{divisor-relation-2} 2^{k-\ell -3} d_{\ell,k} = 2^{i_{k-2\ell-1,k}}d_{\ell,k}=2 \Div(f_{\ell,k}) - \Div(f_{\ell+1,k}) \, ,\end{equation}
for $\frac{k}2 < \ell \leq k-2$,
\begin{equation}\label{divisor-relation-3} 2^{\ell-4} d_{\ell,k} = 2^{i_{2\ell-k-2,k}}d_{\ell,k}=\Div(f_{\ell,k})\, ,\end{equation}
and for $\ell = k-1$,
\begin{equation}\label{divisor-relation-4} 2^{k-4} d_{k-1,k} = 2^{i_{k-2,k}}d_{\ell,k}=\Div(f_{k-1,k}) \, .\end{equation}
We note that the constants in front of the divisors $d_{\ell,k}$ in
\eqref{divisor-relation-1} to \eqref{divisor-relation-4} match the
orders of the cyclic subspaces in the statement of the theorem.  Thus
to prove the statement, it suffices to show that the $d_{\ell,k}$'s
generate all of $J_0(2^k)(\Q)_{\text{cusp}}$, and that each cyclic
subspace $\langle d_{\ell,k} \rangle$'s intersection with the subspace
$\langle d_{1,k}, \ldots , d_{\ell-1,k},d_{\ell+1,k}, \ldots \rangle$
is trivial.  Since $f_{0,k}=\Delta(2^{k-1}z)/\Delta(2^k z)$ has a
single zero at the cusp 0, without loss of generality it can be
assumed that each divisor in $J_0(2^k)(\Q)_{\text{cusp}}$ is not
supported at the cusp $[0]$.  Now to show that the $d_{\ell,k}$'s
generate all of $J_0(2^k)(\Q)_{\text{cusp}}$, it remains to show that
there exists a divisor $s = \sum_{\left[{j\atop{2^t}}\right]\in
  C}n_{t,s} \left[{j\atop{2^t}}\right]$ where $n_{t,s} = 0$ for $0\leq
t < \frac{k}2$ and $ n_{\frac{k}2,s}=1$.  One such divisor is
\[ s = \left(\sum_{4\leq \ell < \frac{k}2} 2^{k-2\ell} d_{\ell,k}\right) - \Div(f_{4,k}) \, .\]

We now show that each cyclic subspace $\langle d_{\ell,k} \rangle$'s
intersection with $\langle d_{1,k}, \ldots ,
d_{\ell-1,k},d_{\ell+1,k}, \ldots \rangle$ is trivial.  Examining the
divisors in $\langle d_{\ell,k} \rangle$, when $1\leq \ell \leq 3$ and
$\frac{k}2 < \ell \leq k-1$, we note that the support for each
multiple of $d_{\ell,k}$ does not include the cusps
$\left[{j\atop{2^t}}\right]$ for $t< \ell$ but does include the cusps
$\left[{j\atop{2^\ell}}\right]$.  For this reason they cannot be
linear combinations of the other generators.  At this point, if there
exists some non-trivial relation between the $d_{\ell,k}$'s, it must
involve only the $\ell$'s with $4 \leq \ell < k/2 $.  Since this
interval is empty if $k=8$, we let $k\geq 10$.  Suppose
$\sum_{\ell=4}^{k/2-1}c_\ell d_{\ell,k} = \Div(g)$ for some $g \in
\mathcal{M}(2^k)$. Because $d_{\ell,k}$ has order $2^{k - \ell - 3}$,
we may assume that either $c_{\ell} = 0$, or $0 \leq \ord_{2}(c_{\ell}) < k - \ell - 3$. Utilizing the relations in $\eqref{divisor-relation-1}$ to
$\eqref{divisor-relation-4}$, we see that
$g=\prod_{\ell=4}^{k/2-1}(f_{\ell,k}^2/f_{\ell+1,k})^{c_\ell/2^{k-\ell-3}}$.
When we write $g= \prod \eta(2^t z)^{r_t}$, we see for $3 \leq t \leq
k/2$ that
\[
r_t = \begin{cases} -2 \frac{c_4}{2^{k-7}} & t=3\\
\vspace{-0.15in}\\
 5\frac{c_4}{2^{k-7}}-2\frac{c_5}{2^{k-8}} & t = 4\\
\vspace{-0.15in}\\
 -2\frac{c_{t-1}}{2^{k-t-2}} + 5\frac{c_t}{2^{k-t-3}} - 2\frac{ c_{t+1}}{2^{k-t-4}} & 5 \leq t \leq \frac{k}2 - 2\\
\vspace{-0.15in}\\
 -2\frac{c_{\frac{k}2-2}}{2^{\frac{k}2-1}} +5\frac{c_{\frac{k}2-1}}{2^{\frac{k}2-2}} & t = \frac{k}2-1 \\
\vspace{-0.15in}\\
-2\frac{c_{\frac{k}{2}-1}}{2^{\frac{k}2-2}} & t ={\frac{k}{2}}\, . \end{cases}
\]
Let $t^*$ be the minimal $t$ in $4\leq t < \frac{k}{2}-1$ with $c_{t^*} \neq 0$.  Since each $r_t \in \Z$ by Lemma \ref{step 4}, examining $r_{t^*-1}$ and using $0 \leq  \ord_2(c_{t^*}) < k-t^*-3$, we see that $\ord_2(c_{t^*})= k-t^*-4$ which then implies that $\ord_2(c_{t^*+1}) = k-t^*-6$ since $r_{t^*} \in \Z$.  Repeating this argument for each successive $r_t$ with $t^*\leq t < \frac{k}2$, we see that $\ord_2(c_{t+1})= k+t^*-6-2t$.   However, this would imply that $\ord_2(r_{k/2}) = t^*-\frac{k}{2}$, which contradicts that $r_{\frac{k}2} \in \Z$.
\end{proof}

\begin{proof}[Proof of Theorem~\ref{genuszero}]
We will show that there is a bijection between weight $k$ eta-quotients
in $M_{k}(\Gamma_{0}(N))$ and cuspidal divisors of degree $\frac{k}{12} [\SL_{2}(\Z) : \Gamma_{0}(N)]$ with non-negative multiplicity at each cusp. Since
there are $\phi\left(\gcd\left(d,\frac{N}{d}\right)\right)$ cusps with denominator $d$, the claimed result follows.

It is well-known that if $f(z) \in M_{k}(\Gamma_{0}(N))$, then the number of
zeros of $f(z)$ on a fundamental domain for $\Gamma_{0}(N)$ is equal to
$\frac{k}{12} [\SL_{2}(\Z) : \Gamma_{0}(N)]$. (This follows
from Proposition 2.16 of \cite{Shimura} together with an application of
the Riemann-Hurwitz formula for the genus of $X_{0}(N)$ derived from the covering $X_{0}(N) \to X_{0}(1)$.)

If $f(z) \in M_{k}(\Gamma_{0}(N))$ is an eta-quotient, let $c_{d}
= \ord_{1/d}(f(z))$. It follows that
\[
  \sum_{d | N} c_{d} \phi\left(\gcd\left(d,\frac{N}{d}\right)\right) = \frac{k}{12} [\SL_{2}(\Z) : \Gamma_{0}(N)],
\]
and so $\text{div}(f)$ is a cuspidal divisor of the appropriate degree.

Suppose now $X_{0}(N)$ has genus zero, $h(z) \in M_{k}(\Gamma_{0}(N))$ is
an eta-quotient, and that $(c_{d} : d | N)$ is a sequence of non-negative
integers with $\sum_{d | N} c_{d} \phi\left(\gcd\left(d,\frac{N}{d}\right)\right) = \frac{k}{12} [\SL_{2}(\Z) : \Gamma_{0}(N)]$. Let $D$ be the divisor
\[
  D = \sum_{d | N} c_{d} \left(\sum_{j} c_{d} \left[ \begin{matrix} j \\ d \end{matrix} \right]\right) - \text{div}(h(z)).
\]
Here, the sum on $j$ is over all $\Gamma_{0}(N)$-classes of cusps $\left[ \begin{matrix} j \\ d \end{matrix} \right]$ with denominator $d$.

Then, $D$ is a divisor supported at cusps and has degree zero. Since $X_{0}(N)$ has genus zero, $J_{0}(N)(\Q)_{\text{cusp}}$ is trivial and every degree zero cuspidal divisor is the divisor of some modular function $i(z)$ with rational Fourier coefficients, and with leading Fourier coefficient $1$. Let
$f(z) = h(z) i(z)$. If $C$ is the least common multiple of the denominators
of the coefficients of $f(z)$, then $C f(z) \in M_{k}(\Gamma_{0}(N)) \cap \Z[[q]]$ and is nonvanishing on $\mathbb{H}$. Corollary~\ref{step 3} implies that
every coefficient is a multiple of $C$ and therefore $C = 1$. Theorem~\ref{etaquotientthm} now implies that $f(z)$ is an eta-quotient, and we have that
\[
  \text{div}(f) = \sum_{d | N} c_{d} \left(\sum_{j} \left[ \begin{matrix} j \\ d \end{matrix} \right]\right).
\]
\end{proof}

\bibliographystyle{amsplain}
\bibliography{etarefs}

\end{document}